\newtheorem{thm}{\hskip\parindent Theorem}[section]
\newtheorem{lem}[thm]{\hskip\parindent Lemma}
\newtheorem{prb}[thm]{\hskip\parindent Problem}
\newtheorem{cor}[thm]{\hskip\parindent Corollary}
\theoremstyle{definition}
\newtheorem{dfn}{\hskip\parindent Definition}[section]
\newtheorem{rem}[dfn]{\hskip\parindent Remark}
\newcommand{\R}{\mathcal{R}}
\DeclareMathOperator{\wt}{wt}
\begin{document}

\title{Rings of Coefficients of Universal Formal Groups for~Elliptic~Genus of~Level~N.}
\author{E.\,Yu.~Bunkova}
\address{Steklov Mathematical Institute of Russian Academy of Sciences, Moscow, Russia}
\email{bunkova@mi-ras.ru}
\thanks{Supported by the Russian Foundation for Basic Research (Grant No. 20-01-00157 A)}

\begin{abstract}
The paper is devoted to problems at the intersection of formal group theory, the theory of Hirzebruch genera, and the theory of elliptic functions.

The elliptic function of level~$N$ determines the elliptic genus of level~$N$ as a Hirzebruch genus.
It is known that the elliptic function of level~$N$
is a specialization of the Krichever function that determines the Krichever genus. The Krichever function is the exponential of the universal Buchstaber formal group. 

In this work we describe rings of coefficients of specializations of universal Buchstaber formal groups that correspond to the elliptic genus of~level~$N$ for $N = 2,3,4,5,6$.
\end{abstract}

\maketitle

\vspace{-10pt}

\section{Introduction} \label{s1}

All rings considered in this paper will be supposed to be commutative rings with~unity.
\textit{A one-dimensional formal group law} (or, briefly, a \textit{formal group}) over a ring $R$ is a formal series
\begin{equation} \label{FGL}
F(u,v)=u+v+\sum\limits_{i,j\geqslant 1}a_{i,j}u^iv^j\in R[[u,v]]
\end{equation}
that satisfies the conditions of associativity
\begin{equation} \label{ass}
F(u,F(v,w)) = F(F(u,v),w)
\end{equation}
and commutativity
$$F(u,v)=F(v,u).$$
For the general theory of formal groups see \cite{Hazewinkel1978}.

The \emph{exponential of a formal group} $F(u, v)$ over $R$ is a formal series
$f(x) \in R \otimes \mathbb{Q}[[x]]$, determined by the initial conditions $f(0)=0, f'(0) = 1$ and the addition law
\begin{equation}
\label{fxy}
f(x+y) = F(f(x),f(y)).
\end{equation}
Thus, the substitution $u = f(x)$, $v = f(y)$ linearizes any formal group $F(u,v)$ over $R \otimes \mathbb{Q}$.
The series $g(u)$ functionally inverse to $f(x)$ is called \emph{logarithm of the formal group} $F(u,v)$.

In the ring $R$ one can select a subring $R_F$ of coefficients for the formal group $F(u,v)$.
By definition, the ring $R_F$ is multiplicatively generated by the coefficients $a_{i,j}$ in \eqref{FGL}.
In~the case $R = R_F$ the formal group $F(u,v)$ over $R$ is called a \emph{generating formal group}. 

In this work we consider the problem of finding the rings of coefficients of a series of~formal groups that give specializations of the Buchstaber formal group (see Section~\ref{s5})
\begin{equation} \label{FB0}
F(u,v) =\frac{u^2 A(v) - v^2 A(u)}{u B(v) - v B(u)}, 
\end{equation}
where $A(u), B(u) \in R[[u]]$ and $A(0) = B(0) = 1$.
The exponentials of these specializations are elliptic functions of level $N$ that, by definition, are the functions that determine the elliptic genus of~level~$N$ as a Hirzebruch~genus.
The rings of coefficients for such formal~groups for $N=2,3$ were described in \cite{BU}, \cite{BBU}. In this work we give solutions to~this problem for $N = 4,5,6$. The main resuls are presented in Section \ref{e4}, where we give a detailed descriprion of such formal groups in the $N=4$ case.

The aim of this work is to find an approach for the $R$-integrability property that works for each elliptic genus of level $N$ (see Section \ref{s2}). In Sections \ref{s5} and \ref{s6} we present formal groups whose specializations give elliptic functions of level $N$. In Sections \ref{e2} and \ref{e3} we~present known results about universal formal groups of a given form with exponentials the~elliptic functions of level $2$ and of level $3$. Their rings of coefficients are of interest. We search for universal formal groups with coefficients over $\mathbb{Z}$ such that the formal groups are generating and the rings are torsion-free and have a simple structure.

In Section \ref{e4} we obtain the corresponding results for the elliptic function of level $4$.
In~Sections \ref{e5} and \ref{e6} we present some results for the elliptic functions of level $5$ and $6$.

\section{The Hirzebruch genus} \label{s2}

Let $K$ be an associative commutative unital algebra over $\mathbb{Q}$.
Let $l$ be a lattice in $\mathbb{C}$,
let $\sigma$, $\zeta$, $\wp$ be Weierstrass functions for this lattice (see \cite{Iso}, \cite{WW}).

The Hirzebruch genus is one of the most important classes of invariants of manifolds. A genus is a ring homomorphism
$L_f\colon \Omega_U \to K$ from the cobordism ring of stable complex manifolds $\Omega_U$ to $K$.
The Hirzebruch genus construction (see Sections 13 and 14 of \cite{CC}) matches such a genus to the series
\begin{equation} \label{fz}
f(z) = z + \sum_{k=1}^{\infty} f_k z^{k+1},  
\end{equation}
with coefficients~$f_k$ in~$K$.

There is an important property of $R$-integrability (see Section 14 of \cite{CC}),
that the image of $L_f$ belongs to the subring $R$ of the ring $K$.
There is a one-to-one correspondence between the set of formal groups over $R$ and the set of ring homomorphisms $\Omega_U \to R$.
Wherein the formal group $F$ corresponds to the ring homomorphism, determined by the Hirzebruch genus $L_f$, where $f$ is the exponential of this group with coefficients in $K = R \otimes \mathbb{Q}$.
Thus, to study the property of $R$-integrality of the Hirzebruch genus,
it is sufficient to 
construct the formal group corresponding to it and to study its ring of~coefficients.
If the ring of coefficients $R$ has torsion, then it's torsion-free subring gives the same exponential as the ring $K = R \otimes \mathbb{Q}$
is the same. Thus we will be interested in formal groups with torsion-free rings of coefficients.

In this work we consider formal groups that correspond to the Krichever genus.  By \cite{B19} (see also Section E.5 of \cite{T} and \cite{Krichever-90}), one can define the the Krichever genus as a~Hirzebruch genus where the Krichever function $f(z)$ satisfies a differential equation of~the form
\begin{equation} \label{feq}
f(z) f'''(z) - 3 f'(z) f''(z) = 6 q_1 f'(z)^2 + 12 q_2 f(z) f'(z) + 12 q_3 f(z)^2.
\end{equation}
In the non-degenerate case the Krichever function has the form (see \cite{Krichever-90}, \cite{Hohn})
\begin{equation} \label{fKr} 
{ \sigma(z) \sigma(\rho) \over \sigma(\rho - z)} \exp(\alpha z - \zeta(\rho) z).
\end{equation}
This expression depends on the variable $z$, the parameters $\alpha$, $\rho$ and the lattice $l$.

\vfill

\eject

A condition that the complex genus $L_f$ is fiberwise multiplicative with respect to~$\mathbb{C}P^{n-1}$
leads to the \emph{Hirzebruch functional equation} for the function $f(z)$:
\begin{equation} \label{Hfe} 
	\sum_{i = 1}^{n} \prod_{j \ne i} { 1 \over f(z_j - z_i)} = c,
\end{equation}
where $n$ is an integer greater than one, and $c$ is a constant
(see Chapter 4 of \cite{Hirtz} for the case of odd functions and Chapter 9 of \cite{T} for the general case).
The elliptic genus of~level~$N$ has this property for~$N|n$.
The fundamental results about the variety of~solutions of~the equation~\eqref{Hfe} were obtained in~\cite{Man}.
For $K = \mathbb{C}$ and~$n \leqslant 6$ the full classification of~solutions of~\eqref{Hfe} in the class of~series of the form \eqref{fz} was obtained in~\cite{Bun-18}.
The equation \eqref{Hfe} belongs to a wide class of functional equations considered in \cite{B18}
that have applications to the problem of Hirzebruch genera of complex cobordism classes.

The definition of an elliptic function of level $N$ in the general case is given in \cite{B19}. This is the function $f(z)$ determining the elliptic genus of level $N$ as a Hirzebruch genus. 
Note that the elliptic genus of level $2$ is the Ochanine--Witten genus (\cite{Osh}, \cite{Wit}). Next we recall the definitions in the non-degenerate case.

\begin{dfn}[{\cite[Appendix III, Section 1]{Hirtz}}] \label{d12}
A \emph{non-degenerate elliptic function of~level~$N$} with lattice~$l$
is a meromorphic function~$f$, such that
$f(0) = 0$, $f'(0) = 1$,
and~$G(z) = f(z)^N$ is an elliptic function with lattice~$l$ and divisor $N \cdot 0 - N \cdot \rho$ for~$\rho \in \mathbb{C}$.
Additionally, we require $N \in \mathbb{N}$ to be the minimal number such that $f(z)$ has this property.
\end{dfn}

From definition~\ref{d12} it follows that $\rho$ is an $N$-torsion point for the lattice~$l$, that is~$\rho \notin l$, $N \rho \in l$.
Moreover, for given $l$ and an $N$-torsion point $\rho$ one can obtain $G(z)$ and~$f(z)$ by definition~\ref{d12} in a unique way (see~\cite{Hirtz}).
From the minimality condition it follows that the order of $\rho$ as an element of the group $\mathbb{C}/l$ is equal to $N$.

\begin{lem}[{\cite[Lemma 2.3]{Bun-18}}] \label{llN}
For a non-degenerate elliptic function $f(z)$ of level $N$ with lattice~$l$ one can choose the generators $\omega, \omega'$ of the lattice $l$, such that
$\rho = \omega / N$ and the periodicity conditions hold 
\begin{align*}
f(z + \omega) &= f(z), & f(z + \omega') &= \exp\left(- 2 \pi I / N\right) f(z), \quad \text{where} \quad I^2 = -1.
\end{align*}
\end{lem}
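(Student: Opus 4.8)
The plan is to first normalize the lattice basis so that $\rho = \omega/N$, and then to read off the two multipliers of $f$ along $\omega$ and $\omega'$ from the explicit $\sigma$-form of a non-degenerate elliptic function of level $N$.

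First I would fix the basis. Since $\rho$ has order exactly $N$ in $\mathbb{C}/l$, its class in $\tfrac1N l / l \cong (\mathbb{Z}/N)^2$ is an element of order $N$, i.e.\ a primitive vector. The group $SL_2(\mathbb{Z})$ acts on oriented bases of $l$ and surjects onto $SL_2(\mathbb{Z}/N)$, which acts transitively on primitive vectors of $(\mathbb{Z}/N)^2$; hence there is an orientation-preserving change of basis carrying the class of $\rho$ to $\tfrac1N$ times the first basis vector. Renaming, we obtain generators $\omega,\omega'$ of $l$ with $\mathrm{Im}(\omega'/\omega)>0$ and $\rho \equiv \omega/N \pmod l$; as the divisor of $G=f^N$ only sees $\rho$ modulo $l$, we may take $\rho=\omega/N$ exactly. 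Because $G=f^N$ is $l$-periodic, the ratios $\epsilon_1=f(z+\omega)/f(z)$ and $\epsilon_2=f(z+\omega')/f(z)$ are meromorphic functions whose $N$-th powers equal $1$; being continuous with constant modulus, they are constant $N$-th roots of unity.

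Next I would compute these constants. Since $G=f^N$ is $l$-periodic, the logarithmic derivative $f'/f=\tfrac1N\,G'/G$ is $l$-periodic, and from the divisor $N\cdot 0-N\cdot\rho$ it has simple poles of residue $1$ at $0$ and $-1$ at $\rho$ and no others in a fundamental domain, so $f'/f=\zeta(z)-\zeta(z-\rho)+c$ for a constant $c$; equivalently $f(z)=C\,\sigma(z)\sigma(z-\rho)^{-1}e^{cz}$, which is \eqref{fKr} rewritten using that $\sigma$ is odd. Applying the quasi-periodicity $\sigma(z+\omega)=-\exp(\eta(z+\omega/2))\,\sigma(z)$ with $\eta=2\zeta(\omega/2)$, and the analogous relation with $\eta'=2\zeta(\omega'/2)$, a short computation gives $\epsilon_1=\exp(\eta\rho+c\omega)$ and $\epsilon_2=\exp(\eta'\rho+c\omega')$. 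Imposing $|\epsilon_1|=|\epsilon_2|=1$ yields two real-linear conditions on $c\in\mathbb{C}$ with nonzero determinant (as $\omega,\omega'$ are $\mathbb{R}$-independent), so $c$ is uniquely determined; one checks directly that $c=-\eta/N$ solves them. Substituting $\rho=\omega/N$ and $c=-\eta/N$ gives $\epsilon_1=1$ and $\epsilon_2=\exp((\eta'\omega-\eta\omega')/N)$, and Legendre's relation $\eta\omega'-\eta'\omega=2\pi I$ turns the latter into $\epsilon_2=\exp(-2\pi I/N)$, which is the claim.

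I expect the genuine obstacle to be this last step: pinning down the \emph{exact} primitive root $\exp(-2\pi I/N)$ rather than merely some primitive $N$-th root of unity. This forces careful bookkeeping of the quasi-periods $\eta,\eta'$ together with the correct sign and orientation in Legendre's relation, and it is the reason the orientation-preserving normalization $\mathrm{Im}(\omega'/\omega)>0$ in the first step matters. A more intrinsic alternative for the equality $\epsilon_1=1$, avoiding the explicit form, is to set $h(z)=\prod_{k=0}^{N-1}f(z+k\rho)$: using $\epsilon_1^N=\epsilon_2^N=1$ one sees $h$ is $l$-periodic, and since $f$ has only a simple zero on $l$ and a simple pole on $\rho+l$, the zeros and poles of $h$ over $\langle\rho\rangle$ cancel, so $h$ is constant and $\epsilon_1=h(z+\rho)/h(z)=1$; the value of $\epsilon_2$, however, still seems to require Legendre's relation.
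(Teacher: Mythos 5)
Your proposal is correct, and the paper offers nothing to compare it against line by line: Lemma~\ref{llN} is quoted from \cite[Lemma 2.3]{Bun-18} without proof. Your route --- normalizing the basis via $SL_2(\mathbb{Z})\twoheadrightarrow SL_2(\mathbb{Z}/N)$ so that $\rho\equiv\omega/N$, writing $f(z)=C\,\sigma(z)\sigma(z-\rho)^{-1}e^{cz}$, pinning down $c=-2\zeta(\omega/2)/N$ from the unimodularity of the two multipliers, and finishing with Legendre's relation --- is exactly the mechanism underlying the Krichever form \eqref{fKr} and the value $\alpha=\zeta(\omega/N)-2\zeta(\omega/2)/N$ recorded in Corollary~\ref{CorKri} (your $c=\alpha-\zeta(\rho)$), so it is the standard argument rather than a genuinely different one.
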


\begin{cor}[\cite{Krichever-90}] \label{CorKri}
A non-degenerate elliptic function of level $N$ with parameters $\omega, \omega'$ is determined by the expression~\eqref{fKr} with lattice $l = \langle \omega, \omega' \rangle$ and $\alpha = \zeta(\omega/N) - 2 \zeta(\omega/2)/N$, $\rho = \omega/N$.
\end{cor}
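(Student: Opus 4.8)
The plan is to exploit the uniqueness already recorded after Definition~\ref{d12}: for a fixed lattice $l$ and a fixed $N$-torsion point $\rho$, the non-degenerate elliptic function of level $N$ is uniquely determined. Hence it suffices to verify that the explicit Krichever function \eqref{fKr}, evaluated at $\rho = \omega/N$ and $\alpha = \zeta(\omega/N) - 2\zeta(\omega/2)/N$, is itself a non-degenerate elliptic function of level $N$ for the lattice $l = \langle \omega, \omega' \rangle$ with this $\rho$. I would establish this by checking, in order, the normalization $f(0)=0$, $f'(0)=1$ of Definition~\ref{d12}; the two quasi-periodicity relations of Lemma~\ref{llN}; and finally the divisor statement together with the minimality of $N$.

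The normalization is immediate from $\sigma(z) = z + O(z^5)$ near the origin: writing $f(z) = \frac{\sigma(z)\sigma(\rho)}{\sigma(\rho-z)}\exp((\alpha-\zeta(\rho))z)$, the factor $\sigma(z)$ supplies a simple zero at $z=0$ while the remaining factors tend to $1$, so that $f(0)=0$ and $f'(0)=1$.

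The core of the argument is the computation of the two period shifts, and this is the step I expect to be the main obstacle, since it is exactly where the apparently ad hoc value of $\alpha$ reveals its purpose. Abbreviating the quasi-periods by $\eta = 2\zeta(\omega/2)$ and $\eta' = 2\zeta(\omega'/2)$, I would apply the transformation law $\sigma(z+\omega) = -\sigma(z)\exp(\eta(z+\omega/2))$ and its analogue for $\omega'$. Substituting into the quotient $f(z+\omega)/f(z)$, the two sign factors cancel and the $z$-dependence drops out, leaving $\exp(\eta\rho + (\alpha-\zeta(\rho))\omega)$. The chosen $\alpha$ gives precisely $\alpha - \zeta(\rho) = -\eta/N$, so with $\rho=\omega/N$ the exponent equals $\eta\omega/N - \eta\omega/N = 0$ and $f(z+\omega)=f(z)$. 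The identical computation for $\omega'$ produces $\exp((\eta'\omega - \eta\omega')/N)$, and here Legendre's relation $\eta\omega' - \eta'\omega = 2\pi i$ converts this into $\exp(-2\pi i/N)$, matching the second relation of Lemma~\ref{llN} with $I=i$. Thus $\alpha$ is tuned exactly to trivialize the $\omega$-shift, while Legendre's relation delivers the required root of unity in the $\omega'$-direction.

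It remains to read off the divisor and the minimality. The zeros and poles of $f$ are governed entirely by $\sigma(z)$ and $\sigma(\rho-z)$, giving a simple zero at the points of $l$ and a simple pole at the points of $\rho + l$; consequently $G = f^N$ is regular apart from a zero of order $N$ at $0$ and a pole of order $N$ at $\rho$, so its divisor is $N\cdot 0 - N\cdot\rho$. The quasi-periodicity just verified gives $G(z+\omega)=G(z)$ and $G(z+\omega')=\exp(-2\pi i)G(z)=G(z)$, so $G$ is genuinely elliptic for $l$. Minimality of $N$ follows because $f(z+\omega')^M = \exp(-2\pi i M/N)\,f(z)^M$ is $\omega'$-periodic only when $N\mid M$, while $\rho$ has exact order $N$ in $\mathbb{C}/l$. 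By the uniqueness invoked at the outset, this explicit function is the non-degenerate elliptic function of level $N$, which is the assertion of the corollary.
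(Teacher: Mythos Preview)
Your argument is correct. The paper itself gives no proof of this corollary: it is simply stated, attributed to \cite{Krichever-90}, and positioned as a consequence of Lemma~\ref{llN} together with the uniqueness remark following Definition~\ref{d12}. Your write-up supplies precisely the details that this placement suggests---checking the two quasi-periodicities via the $\sigma$-transformation law and Legendre's relation (which is exactly where the specific value of $\alpha$ is forced), reading off the divisor of $f^N$, and then invoking uniqueness---so your approach is fully in line with what the paper leaves implicit.
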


\section{Universal formal groups} \label{s3}

A formal group $\mathcal{F}_U(u,v) = u + v + \sum \alpha_{i,j} u^i v^j$ over a ring $\R_U$ is called a
{\it universal formal group}
if for any formal group $F(u,v)$ over a ring $R$
there exists a unique ring homomorphism~$h_F\colon \R_U \to R$ such that $F(u,v) = u + v + \sum h_F(\alpha_{i,j}) u^i v^j$. 
The ring homomorphism~$h_F$ is called the \emph{classifying homomorphism}.
Further we suppose that the ring homomorphism $h_F$ is graded,
that is, the ring $R$ is graded and $\wt a_{i,j} = - 2 (i + j - 1)$.  
See Lemma 2.4 in \cite{BU} for the existence of the universal formal group.

When we consider formal groups of a given form, following \cite{BU}, we will speak of the {\it universal formal group of a given form}.
Namely, this is a formal group $\mathcal{F}_\#(u,v) = u + v + \sum \alpha_{i,j}^\# u^i v^j$ of this form over a ring $\R_\#$ such that for any formal group $F(u,v)$ of this form over a ring $R$
there exists a unique ring homomorphism~$h^\#_F \colon \R_\# \to R$ such that $F(u,v) = u + v + \sum h_F^\#(\alpha_{i,j}^\#) u^i v^j$. We have $h_F = h_F^\# h_\#$.

The ring of coefficients of the universal formal group of the form~\eqref{FB0} is described in~\cite{BU}. It uses a construction of the multiplicative generators in $\R_U$ that we will briefly describe here following \cite{BU}.

For natural numbers $m_1, \ldots, m_k$ we denote by $(m_1, \ldots, m_k)$ the greatest common divisor of these numbers. Using Euclidean algorithm, one can find integers $\lambda_1, \ldots, \lambda_k$ such that $\lambda_1 m_1 + \ldots \lambda_k m_k = (m_1, \ldots, m_k)$. For $n \geqslant 1$ set 
\[
 d(n+1) = \left(\begin{pmatrix}
                 n+1 \\ 1
                \end{pmatrix}, \begin{pmatrix}
                 n+1 \\ 2
                \end{pmatrix}, \ldots,
                \begin{pmatrix}
                 n+1 \\ n
                \end{pmatrix}.
 \right)
\]
By Kummer's theorem (see Theorem 9.2 in \cite{BU}) we have
\[
 d(n+1) = \left\{ \begin{matrix}
                   p, & \text{ if } n+1 = p^k \text{ where } p \text{ is a prime},  \\
                   1 & \text{ if not.}
                  \end{matrix}
 \right.
\]
For each $n$ we fix a set $(\lambda_1, \ldots, \lambda_n)$ such that the equality holds
\[
 \lambda_1 \begin{pmatrix}
                 n+1 \\ 1
                \end{pmatrix} + \lambda_2 \begin{pmatrix}
                 n+1 \\ 2
                \end{pmatrix} + \ldots + \lambda_n \begin{pmatrix}
                 n+1 \\ n
                \end{pmatrix} = d(n+1).
\]

For the universal formal group over the ring $\R_U$ we introduce the element $e_n \in \R_U$ using the~relation 
\begin{equation} \label{en}
 e_n = \lambda_1 \alpha_{1,n} + \lambda_2 \alpha_{2,n-1} + \ldots + \lambda_n \alpha_{n,1}. 
\end{equation}

For any formal group \eqref{FGL} over a ring $R$
we introduce the ideal $I^2$ in $R$ generated by the~elements of the form $a_{i_1, j_1} a_{i_2, j_2}$ in \eqref{FGL}, where $i_1, i_2, j_1, j_2 \geqslant 1$. In the case of a~generating formal group $F$ over $R$, the ideal $I^2$ is~the ideal of decomposible elements of~$R$.

\begin{dfn}[Definition 2.11 in \cite{BU}]
For a formal group $F$ over the ring $R$ the function $\rho: \mathbb{N} \to \{ \mathbb{N} \cup \infty \}$ is such that $\rho(n)$ is the order of the element $h_F(e_n)$ in the~ring~$R / I^2$.
\end{dfn}

\begin{rem}[Remark 4.4 in \cite{BU}]
For a generating formal group $F$ over a ring $R$ the~definition of the function $\rho$ does not depend on the coefficients in \eqref{en}. In this case the multiplicative generators of $R$ are the elements $h_F(e_n)$ such that $\rho(n)>1$.
\end{rem}

\section{Buchstaber formal group} \label{s5}

The \emph{Buchstaber formal group} \cite{Buc90} over the ring $R$ is a formal group of the form 
\begin{equation} \label{FB}
F(u,v) =\frac{u^2 A(v) - v^2 A(u)}{u B(v) - v B(u)}, 
\end{equation}
where $A(u), B(u) \in R[[u]]$ and $A(0) = B(0) = 1$.
Let us introduce the notations
\begin{align} \label{ABr}
A(u) &= 1 + \sum_{k=1}^\infty A_k u^k, & B(u) &= 1 + \sum_{k=1}^\infty B_k u^k. 
\end{align}
Note that the right hand side of \eqref{FB} does not depend on the coefficients $A_2$ and~$B_1$, therefore, further we set $A_2 = B_1 = 0$.
 
In~\cite{Buc90} (see also \cite[Theorem E~5.4]{T}) it was shown that the exponential of the universal Buchstaber formal group is the  Krichever function~\eqref{fKr}
(or it's degeneration).

\begin{lem}[Lemma 2.9 in \cite{BU}] \label{lr}
 Set $R_B = \mathbb{Z}[A_1, A_3, A_4, \ldots, B_2, B_3, B_4, \ldots]/I_{ass}$, where~$I_{ass}$ is the associativity ideal generated by the relation \eqref{ass}. Then the formal group~\eqref{FB} is~generating over $R_B$.
\end{lem}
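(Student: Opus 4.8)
The statement that $F$ is generating means precisely that the subring of $R_B$ multiplicatively generated by the coefficients $a_{i,j}$ of $F$ is all of $R_B$. The plan is therefore to show that each ring generator $A_1, A_3, A_4, \dots$ and $B_2, B_3, \dots$ of $R_B$ can be written as a polynomial with integer coefficients in the $a_{i,j}$. I would do this by induction on the weight, exploiting the grading: one has $\wt A_k = \wt B_k = -2k$ and $\wt a_{i,j} = -2(i+j-1)$, so a generator of weight $-2k$ must be recovered from the coefficients $a_{i,j}$ with $i+j = k+1$.

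First I would rewrite \eqref{FB} in a form adapted to reading off coefficients. Cancelling the common factor $(u-v)$ in \eqref{FB} and using \eqref{ABr} with $A_2 = B_1 = 0$, one gets $F = \tilde N / \tilde D$ with
\[
\tilde N = (u+v) + A_1 uv - \sum_{k \geq 3} A_k\, u^2 v^2\, \frac{u^{k-2}-v^{k-2}}{u-v}, \qquad
\tilde D = 1 - \sum_{k \geq 2} B_k\, uv\, \frac{u^{k-1}-v^{k-1}}{u-v}.
\]
Next I would compute $F$ modulo the ideal $I^2$ of decomposable elements. Since every term of $\tilde N$ past $(u+v)$ and every term of $\tilde D^{-1} \equiv 1 + \sum_{k\geq 2} B_k\, uv\,(u^{k-1}-v^{k-1})/(u-v)$ past $1$ is linear in the generators, all cross terms in the product $\tilde N\,\tilde D^{-1}$ are decomposable, and the indecomposable part of $F$ is linear in the $A_k, B_k$. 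Reading off the coefficient of $u^{k+1-j}v^{j}$ in this linear part gives, modulo $I^2$, the congruences $a_{1,1}\equiv A_1$, $a_{k,1}\equiv B_k$ for $k\geq 2$, and $a_{k-1,2}\equiv 2B_k - A_k$ for $k \geq 3$.

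These congruences can be solved for the generators with integer coefficients: $B_k \equiv a_{k,1}$ and $A_k \equiv 2a_{k,1} - a_{k-1,2} \pmod{I^2}$, since $A_k$ enters $a_{k-1,2}$ with the unit coefficient $-1$. The induction then closes: the right-hand sides are integral combinations of coefficients $a_{i,j}$, while the $I^2$-error is an integral combination of products of coefficients of strictly smaller weight, which already lie in the subring generated by the $a_{i,j}$ by the inductive hypothesis. Hence every $A_k$ and $B_k$ lies in that subring, so it equals $R_B$ and $F$ is generating. All these congruences already hold in the polynomial ring $\mathbb{Z}[A_1,A_3,\dots,B_2,\dots]$ and so persist after passing to the quotient by $I_{ass}$, so the associativity relations cause no difficulty.

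The main obstacle is precisely the integrality of the last step. Over $\mathbb{Q}$ it suffices that the linear map sending the pair $(A_k,B_k)$ to the indecomposable classes of $(a_{k-1,2},a_{k,1})$ be invertible; over $\mathbb{Z}$ one must verify that this recovery introduces no denominators, which is exactly the observation that $A_k$ occurs in $a_{k-1,2}$ with coefficient $-1$ while $B_k$ is read off directly from $a_{k,1}$. Verifying the displayed forms of $\tilde N,\tilde D$ and the coefficient extraction is then routine bookkeeping with the geometric-type sums $(u^{m}-v^{m})/(u-v)$.
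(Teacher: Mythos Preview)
The paper does not supply its own proof of this lemma; it is simply quoted from \cite{BU}. Your argument is correct and is precisely the standard one: cancel the factor $(u-v)$ to write $F=\tilde N/\tilde D$, compute the coefficients $a_{i,j}$ modulo the ideal $I^2$ of decomposables, and observe that the resulting linear system over~$\mathbb{Z}$ is unimodular in the generators of each fixed weight, so that the $A_k,B_k$ are recovered from the $a_{i,j}$ with integer coefficients by induction on the weight. Your explicit congruences $a_{1,1}\equiv A_1$, $a_{k,1}\equiv B_k$ for $k\geqslant 2$, and $a_{k-1,2}\equiv 2B_k-A_k$ for $k\geqslant 3$ are exactly what a direct expansion gives, and the inversion $A_k\equiv 2a_{k,1}-a_{k-1,2}$ introduces no denominators. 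The remark that these identities already hold in the free polynomial ring and hence survive the quotient by $I_{ass}$ is the right way to dispose of any worry about associativity.
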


\begin{cor}[Corollary 2.10 in \cite{BU}]
The ring homomorphism $h_B: \mathcal{R}_U \to R_B$, classifying the Buchstaber formal group over $R_B$, is an epimorphism. One can select the set of multiplicative generators $\{e_1, \ldots, e_n, \ldots \}$ in $\mathcal{R}_U$ such that the set of non-zero elements $h_B(e_n)$ is a minimal set of generators of $R_B$.
\end{cor}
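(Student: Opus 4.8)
The plan is to derive both assertions from the generating property established in Lemma~\ref{lr} together with the description of the indecomposable quotient of $\R_U$ on which the construction of the elements $e_n$ rests.

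First I would establish surjectivity. By Lemma~\ref{lr} the Buchstaber formal group \eqref{FB} is generating over $R_B$, so $R_B$ is multiplicatively generated by the coefficients $a_{i,j}$ of its expansion in the form \eqref{FGL}. By the universal property of $\R_U$ the classifying homomorphism satisfies $a_{i,j} = h_B(\alpha_{i,j})$, so each ring generator $a_{i,j}$ of $R_B$ lies in the image of $h_B$. Hence $h_B$ is an epimorphism. Being induced by a graded formal group, $h_B$ is moreover a homomorphism of graded rings with $\wt \alpha_{i,j} = \wt a_{i,j} = -2(i+j-1)$, and the ideal $I^2$ is graded in each of $\R_U$ and $R_B$; I will use this throughout.

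Next I would pass to indecomposables. The elements $e_n$ of \eqref{en} are built, via the Kummer-theorem computation of $d(n+1)$, so that the class of $e_n$ generates the weight $-2n$ component of $\R_U/I^2$, which is free cyclic; equivalently, applying the Remark (Remark~4.4 in \cite{BU}) to the universal formal group over $\R_U$ shows that the $e_n$ are multiplicative generators of $\R_U$. Since $h_B$ is a graded epimorphism, the images $h_B(e_n)$ generate $R_B$, and the weight $-2n$ component of $R_B/I^2$ is the cyclic group generated by the class of $h_B(e_n)$. By definition $\rho(n)$ is the order of this class, so this component vanishes precisely when $\rho(n) = 1$ and is nonzero, generated by $h_B(e_n)$, precisely when $\rho(n) > 1$. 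Applying the Remark now to the generating Buchstaber group over $R_B$ confirms that the $h_B(e_n)$ with $\rho(n) > 1$ already generate $R_B$.

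Finally I would normalise the generators so that redundancy is recorded by vanishing. For every $n$ with $\rho(n) = 1$ the element $h_B(e_n)$ is decomposable, i.e.\ lies in the ideal $I^2$ in $R_B$; since $h_B$ is a graded epimorphism, the ideal $I^2$ in $R_B$ is the image under $h_B$ of the ideal $I^2$ in $\R_U$, so I may lift $h_B(e_n)$ to a decomposable element $w_n$ in $\R_U$ of weight $-2n$ with $h_B(w_n) = h_B(e_n)$, and replace $e_n$ by $e_n - w_n$. This modification changes $e_n$ only by a decomposable element, hence leaves its class in $\R_U/I^2$, and therefore the property of being a set of multiplicative generators, intact, while now $h_B(e_n) = 0$ whenever $\rho(n) = 1$. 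With this choice the nonzero images $h_B(e_n)$ are exactly those with $\rho(n) > 1$; each generates a nonzero cyclic component of $R_B/I^2$ and so is irredundant, and together they generate $R_B$. Hence they form a minimal set of generators. The step I expect to require the most care is this last one: one must confirm that the lift $w_n$ can be taken weight-homogeneous and decomposable, so that the replacement both forces $h_B(e_n) = 0$ and preserves the generator property, and one must read minimality correctly off the cyclic structure of $R_B/I^2$, including the case of finite $\rho(n) > 1$, where $h_B(e_n)$ generates a nontrivial torsion summand and still cannot be dropped.
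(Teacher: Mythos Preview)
The paper does not give its own proof of this corollary; it simply cites it as Corollary~2.10 of \cite{BU}, immediately after Lemma~\ref{lr}. So there is no in-paper argument to compare against, and your task is really to reconstruct the proof behind the citation.

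Your reconstruction is sound and is the standard argument one would expect in \cite{BU}. Surjectivity follows directly from the generating property of Lemma~\ref{lr}, exactly as you say. The passage to indecomposables is correct: surjectivity of $h_B$ carries $I^2\subset\R_U$ onto $I^2\subset R_B$, so each graded piece of $R_B/I^2$ is a cyclic quotient of the corresponding (free cyclic, by Lazard) piece of $\R_U/I^2$, generated by the class of $h_B(e_n)$. Your normalisation step---replacing $e_n$ by $e_n-w_n$ with $w_n\in I^2$ a homogeneous lift of $h_B(e_n)$ whenever $\rho(n)=1$---is exactly the right trick, and you correctly flag the two points that need care: that $w_n$ can be chosen homogeneous and decomposable (clear, since $h_B$ is graded and the decomposable ideal is graded), and that minimality must be read weight-by-weight off $R_B/I^2$, where a nonzero class of finite order still cannot be dropped because no generator in a different weight can hit it. One small addition that would tighten the write-up: state explicitly that you are invoking Lazard's theorem for the structure of $\R_U/I^2$, since the paper only alludes to it via the construction of the $e_n$ and the reference to \cite{BU}.
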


\begin{thm}[Theorem 6.1 in \cite{BU}] \label{tBU}
The ring $R_B$ is multiplicatively generated by the elements $h_B(e_n)$, where $n = 1, 2, 3, 4$, $n = p^r$ (where $r \geqslant 1$ and $p$ is prime), and $n = 2^k-2$ (where $k>2$). For the function $\rho(n) = \rho_B(n)$ we have
\[
\rho_B(n) = \left\{ \begin{matrix}
                     \infty & \text{ for } n = 1,2,3,4,\\ 
                     p & \text{ for } n = p^r, n \ne 2,3,4, \text{ and } r \geqslant 1 ,\\
                     2 & \text{ for } n = 2^k-2, k \geqslant 3,\\ 
                     1 & \text{ in all other cases.}\\ 
                    \end{matrix}
 \right.
\]
The ring $R_B$ has torsion only of order two.
There exist multiplicative generators $e_{2^k-2}$, where $k\geqslant 3$, in $\R_U$ such that $h_B(e_{2^k-2})$ generates in $R_B$ the ideal $I_B$ of elements of order two. The ring $R_B/I_B$ is torsion-free.
\end{thm}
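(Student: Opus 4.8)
The strategy is to push everything down to the module of indecomposables $Q = R_B/I^2$, compute it degree by degree via Lazard's description of symmetric $2$-cocycles, and only afterwards confront the finer question of torsion in $R_B$ itself. First I would expand the right-hand side of~\eqref{FB}: cancelling the common factor $u-v$ from numerator and denominator gives
\[
F(u,v)=\frac{(u+v)+A_1uv-u^2v^2\sum_{k\geq 3}A_k\sum_{l=0}^{k-3}u^{k-3-l}v^l}{1-uv\sum_{k\geq 2}B_k\sum_{l=0}^{k-2}u^{k-2-l}v^l},
\]
from which one reads off the coefficients $a_{i,j}$ modulo the ideal $I^2$ of decomposables. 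Writing $n=i+j-1$, a direct bookkeeping of the linear-in-coefficient terms gives, in the degree-$n$ part $Q_n$,
\[
a_{1,1}=A_1,\qquad a_{1,n}\equiv a_{n,1}\equiv B_n,\qquad a_{i,j}\equiv 2B_n-A_n\ \ (2\leq i,j\leq n-1),
\]
so that $Q_n$ is generated by the two classes $A_n,B_n$ (only $A_1$ when $n=1$, only $B_2$ when $n=2$).

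The backbone is that over $R_B$ the law~\eqref{FB} is associative (Lemma~\ref{lr}), so modulo $I^2$ the symmetric polynomial $\gamma_n(u,v)=\sum_{i+j=n+1}a_{i,j}u^iv^j$ is a symmetric $2$-cocycle with values in $Q_n$. By Lazard's lemma the integral symmetric $2$-cocycles of degree $n+1$ form the rank-one module generated by $C_{n+1}(u,v)=\tfrac{1}{d(n+1)}\bigl((u+v)^{n+1}-u^{n+1}-v^{n+1}\bigr)$; its coefficients $c_{i,j}=\binom{n+1}{i}/d(n+1)$ have gcd $1$, so $\mathbb{Z}\,C_{n+1}$ is a direct summand and every $Q_n$-valued cocycle is $q\,C_{n+1}$ for a unique $q\in Q_n$. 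Evaluating the combination~\eqref{en} shows $q=h_B(e_n)$, whence $a_{i,j}\equiv c_{i,j}\,h_B(e_n)$ and $Q_n$ is cyclic on $h_B(e_n)$. Comparing this with the explicit values of $a_{i,j}$ yields the defining relations of $Q_n$: from edge--interior pairs $c_{1,n}A_n+(c_{i,n+1-i}-2c_{1,n})B_n=0$, and from interior--interior pairs $(c_{i,n+1-i}-c_{i',n+1-i'})(2B_n-A_n)=0$. Reducing this relation matrix to Smith normal form (its $2\times2$ minors are $c_{1,n}g$, $c_{i_0,\cdot}g$ and $g^2$, with gcd $g$ because the full set $\{c_{i,j}\}$ is coprime) gives
\[
\rho_B(n)=\frac{1}{d(n+1)}\gcd\Bigl\{\,\bigl|\tbinom{n+1}{i}-\tbinom{n+1}{i'}\bigr|:2\leq i,i'\leq n-1\Bigr\},
\]
read as $\infty$ when the interior coefficients all agree.

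This already delivers $\rho_B(1)=\rho_B(2)=\infty$ (a single generator in that degree) and $\rho_B(3)=\rho_B(4)=\infty$ (a single interior value, so the gcd is empty). The remaining content of the $\rho_B$-formula is the evaluation of the displayed gcd, and I expect this to be the technical core of the first half: using Kummer's theorem (already available as Theorem~9.2 in the cited source) together with Lucas' theorem to control the $p$-adic valuations of the binomial differences, one shows it equals $p$ for $n=p^r\neq 2,3,4$, equals $2$ for $n=2^k-2$ with $k\geq 3$, and equals $1$ in all remaining degrees. The multiplicative-generator assertion is then immediate from the Remark following the definition of $\rho$: the generators are exactly the $h_B(e_n)$ with $\rho_B(n)>1$, which is precisely the stated list.

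The hard part will be the torsion statement, which is of a genuinely different nature: $\rho_B(p^r)=p$ only asserts $p\,h_B(e_{p^r})\in I^2$, not that $h_B(e_{p^r})$ is torsion in $R_B$, so torsion in $R_B$ cannot be read from $Q$ alone and forces one to use the associativity relations beyond their linear part. Here I would proceed prime by prime. First I would show $R_B\otimes\mathbb{Q}=\mathbb{Q}[h_B(e_1),\dots,h_B(e_4)]$, matching the four parameters $g_2,g_3,\alpha,\rho$ of the Krichever function~\eqref{fKr}, so that the torsion of $R_B$ is exactly the kernel of $R_B\to R_B\otimes\mathbb{Q}$. For odd $p$ I would present $R_B\otimes\mathbb{Z}_{(p)}$ as a free $\mathbb{Z}_{(p)}$-algebra by using the relations $p\,h_B(e_{p^r})=(\text{decomposable})$ to eliminate the non-free generators, giving no odd torsion. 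At $p=2$ the family $n=2^k-2$ is exceptional: the decomposable element that $2\,h_B(e_{2^k-2})$ reduces to must be shown to vanish identically, so that $2\,h_B(e_{2^k-2})=0$ in $R_B$. With the generators $e_{2^k-2}$ chosen accordingly, these elements generate the torsion ideal $I_B$, every torsion element has order two, and $R_B/I_B$ embeds in $R_B\otimes\mathbb{Q}$ and is therefore torsion-free. Verifying the exact vanishing $2\,h_B(e_{2^k-2})=0$ (as opposed to mere decomposability) is the step I expect to demand the most explicit work.
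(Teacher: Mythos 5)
This theorem is quoted in the paper from \cite{BU} (Theorem 6.1 there) and no proof is reproduced here, so the comparison is against the argument of the cited source rather than anything in this text. Your first half reconstructs that argument correctly in outline and largely in detail: the cancellation of $u-v$ and the resulting congruences $a_{1,n}\equiv B_n$, $a_{i,j}\equiv 2B_n-A_n$ modulo $I^2$ are right; the reduction of the linear part of associativity to the symmetric $2$-cocycle condition and the application of Lazard's lemma to get $a_{i,j}\equiv c_{i,j}\,h_B(e_n)$ with $c_{i,j}=\binom{n+1}{i}/d(n+1)$ is the standard mechanism behind \eqref{en}; and your resulting formula $\rho_B(n)=\gcd\{c_{i,n+1-i}-c_{i',n+1-i'}\}$ over interior indices checks out numerically against every case of the theorem ($n=5,6,7,8,9,10,14,\dots$). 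Two caveats even here. First, the arithmetic evaluation of that gcd (equal to $p$ for $n=p^r$, to $2$ for $n=2^k-2$, to $1$ otherwise) is the actual combinatorial content of the first half, and you only assert that Kummer and Lucas will deliver it; as written this is a promissory note, not a proof. Second, you should justify that the listed edge--interior and interior--interior relations generate the \emph{full} relation lattice in $\mathbb{Z}\langle A_n,B_n\rangle$ (e.g.\ by presenting $Q_n$ with the auxiliary generator $q$ and checking that $q$ lies in $\langle A_n,B_n\rangle$ because $\gcd\{c_{i,j}\}=1$); your Smith-normal-form computation of the minors is otherwise fine.

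The genuine gap is the torsion statement, which you correctly flag as "of a different nature" but then only outline, and one step of the outline fails as stated. Knowing $\rho_B(p^r)=p$ means only that $p\,h_B(e_{p^r})$ is decomposable; over $\mathbb{Z}_{(p)}$ you cannot "use the relation $p\,h_B(e_{p^r})=(\text{decomposable})$ to eliminate the non-free generator" without inverting $p$, which is exactly what $\mathbb{Z}_{(p)}$ forbids. So your proposed presentation of $R_B\otimes\mathbb{Z}_{(p)}$ as a free algebra does not follow from the data you have; ruling out odd torsion requires controlling the full (nonlinear) associativity ideal, not just its image in $Q$. The same issue is even sharper at $p=2$: the theorem needs the exact identity $2\,h_B(e_{2^k-2})=0$ in $R_B$ for a suitable choice of $e_{2^k-2}$, i.e.\ that the decomposable element representing $2\,h_B(e_{2^k-2})$ can be made to vanish identically, and nothing in your argument produces this. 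These two points are precisely the hard core of \cite[Theorem~6.1]{BU}, so while your architecture is sound and the indecomposables computation is essentially complete, the proof as proposed does not yet establish the torsion claims or the final assertion that $R_B/I_B$ is torsion-free.
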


As we have mentioned in Section \ref{s2},
the elliptic genus of level $N$ is a special case of~the Krichever genus.
Thus the universal formal group that corresponds to the elliptic genus of~level $N$ can be defined as a specialization of the Buchstaber formal group.
The~equations that give this specializations for small $N$ are the following (see \cite{Ell4, Ust17}):
\begin{align*}
 &N=2\colon & 1 &= A(u); & &\\
 &N=3\colon & B(u) + 2 A'(0) u &= A(u)^2; & &\\
 &N=4\colon & \left(B(u) + {3 \over 2} A'(0) u\right)^2 &= A(u)^2 \left(A(u) - \left({3 \over 4} A'(0)^2 - B''(0)\right) u^2\right).  & &
\end{align*}
V.M. Buchstaber posed the problem to find such equations for arbitrary $N$.
In \cite{B19} this~problem has been reformulated in the following way:
\begin{prb}[\cite{B19}] \label{prob1}
Prove that:
\begin{enumerate}
 \item For the specialization of Buchstaber formal group, determined for
$n=\left[ {N-1 \over 2} \right]$, $m = \left[ {N -2 \over 2} \right]$ and arbitrary parameters
$(a_1, \ldots, a_m, b_1, \ldots, b_n)$, by the relation
\begin{multline} \label{GenEq}
(B(u) + b_1 u)^2 (B(u) + b_2 u)^2 \ldots (B(u) + b_{n-1} u)^2 (B(u) + b_n u)^{N-2n} = \\
= A(u)^2 (A(u) + a_1 u^2)^2 \ldots (A(u) + a_{m-1} u^2)^2 (A(u) + a_m u^2)^{N-1-2m},
\end{multline} 
the exponential can be only the elliptic function of level not greater than~$N$. 
\item The universal formal group corresponding to the elliptic genus of level~$N$ belongs to this specialization.
\end{enumerate}
\end{prb}
In \cite{B19} this problem was solved for $N=2,3,4,5,6$. 

\vfill
\eject

\section{Tate Formal Group} \label{s6}

The {\it general Weierstrass model for the elliptic curve} is given by the equation
\begin{equation} \label{A1}
Y^2 Z + \mu_1 X Y Z + \mu_3 Y Z^2 = X^3 + \mu_2 X^2 Z + \mu_4 X Z^2 + \mu_6 Z^3
\end{equation}
in homogeneous coordinates $(X:Y:Z)$
and parameters $\mu = (\mu_1, \mu_2, \mu_3, \mu_4, \mu_6)$.

A linear change of coordinates
\[
(X: Y: Z) \mapsto \left(X + {\nu_2 \over 2}\, Z: 2 Y + \mu_1 X + \mu_3 Z : Z\right), \quad \text{where} \quad \nu_2={\mu_1^2+4 \mu_2 \over 6},
\]
brings it into the {\it standard Weierstrass model for the elliptic curve}, which is
given in Weierstrass coordinates $(x:y:1)$ by the equation
\begin{equation} \label{eg}
 y^2 = 4 x^3 - g_2 x - g_3,
\end{equation}
where
\begin{equation} \label{g23}
 g_2 = 3 \nu_2^2- 2 \mu_1 \mu_3 - 4 \mu_4, \qquad g_3 = - \nu_2^3 + \nu_2 \mu_1 \mu_3 - \mu_3^2 + 2 \nu_2 \mu_4 - 4 \mu_6.
\end{equation}
The map $z \mapsto (x,y) = (\wp(z), \wp'(z))$, where $\wp(z)$ is the Weierstrass $\wp$-function with parameters~$g_2, g_3$,
gives an uniformization of the curve \eqref{eg} (see \cite{WW}). 

In Tate coordinates $(u:-1:s)$ (see~\cite{BBKrich, Tate-74})
the equation of the elliptic curve \eqref{A1} takes the form
\begin{equation} \label{Ats}
s =  u^3 + \mu_1 u s + \mu_2 u^2 s + \mu_3 s^2 + \mu_4 u s^2 + \mu_6 s^3.
\end{equation}
It can be seen directly from \eqref{Ats} that
the coordinate $s$ can be presented as a series $s(u) \in \mathbb{Z}[\mu_1, \mu_2, \mu_3, \mu_4, \mu_6][[u]]$. We have
\[
 s(u) = u^3 + \mu_1 u^4 + \left( \mu_2 + \mu_1^2 \right) u^5 
 + \left(\mu_3 + 2 \mu_1 \mu_2 + \mu_1^3\right) u^6 + \ldots
\]
Thus, the map $u \mapsto (u,s(u))$ determines a uniformization of the curve \eqref{Ats}. 

The classical geometric construction of the group structure on an elliptic curve  (see~\cite{Knapp}) in coordinates $(u, s(u))$ 
in the vicinity of $(0,0)$ determines a formal group $\mathcal{F}_{T}(u, v)$ over the ring $\mathbb{Z}[\mu_1, \mu_2, \mu_3, \mu_4, \mu_6]$,
which is called the {\it Tate formal group}. We denote by $\mathcal{R}_T$ the~subring $\mathcal{R}_T \subset \mathbb{Z}[\mu_1, \mu_2, \mu_3, \mu_4, \mu_6]$ of coefficients of $\mathcal{F}_T(u,v)$.

\begin{thm}[Theorem 2.3 in \cite{BBKrich}] \label{Adel} The addition law in the formal group $\mathcal{F}_{T}(u,v)$ is~given by the expression
\begin{equation} \label{FG}
\!\!\left(\!u + v - u v {\mu_1 + \mu_3 m + (\mu_4  + 2 \mu_6  m)k
\over 1 - \mu_3 k - \mu_6 k^2}\right)\!\!{ 1 + \mu_2 m + \mu_4 m^2 + \mu_6 m^3 \over (1 + \mu_2 n + \mu_4 n^2 + \mu_6 n^3) (1 - \mu_3 k - \mu_6 k^2)},
\end{equation}
\[
\text{where} \quad m = {s(u) - s(v) \over u - v}, \quad k = {u s(v) - v s(u) \over u - v}, \quad n = m + u v {1 + \mu_2 m + \mu_4 m^2 + \mu_6 m^3 \over 1 - \mu_3 k - \mu_6 k^2}.
\]
\end{thm}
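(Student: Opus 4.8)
The plan is to derive \eqref{FG} directly from the classical chord-and-tangent construction of the group law on the elliptic curve \eqref{A1}, read off in the uniformizing coordinate $u \mapsto (u, s(u))$. Writing $P_1 = (u, s(u))$ and $P_2 = (v, s(v))$, the secant line through $P_1$ and $P_2$ in the $(U,S)$-plane is $S = m U + k$: a one-line check that $s(u) = mu + k$ and $s(v) = mv + k$ shows that $m$ is its slope and $k$ its $S$-intercept, which is exactly why these two combinations are singled out in the statement.

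First I would substitute $S = mU + k$ into the curve equation \eqref{Ats} and collect powers of $U$. This produces a cubic $c_3 U^3 + c_2 U^2 + c_1 U + c_0 = 0$ whose roots are $u$, $v$ and the $U$-coordinate $w$ of the third intersection point $P_3$. A direct computation gives the two coefficients that govern the whole formula,
\[
c_3 = 1 + \mu_2 m + \mu_4 m^2 + \mu_6 m^3, \qquad c_0 = - k\,(1 - \mu_3 k - \mu_6 k^2),
\]
so that by Vieta's relations $u + v + w = -c_2/c_3$ and $uvw = -c_0/c_3 = k(1-\mu_3 k - \mu_6 k^2)/c_3$. Both the factor $1 + \mu_2 m + \mu_4 m^2 + \mu_6 m^3$ in the numerator of \eqref{FG} and the factor $1 - \mu_3 k - \mu_6 k^2$ in its denominators originate here.

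Next I would implement the negation step. Since the identity $O$ of the group is the point $u = 0$, $s = 0$ (the class $(0:1:0)$ in the coordinates $(u:-1:s)$), and $\mathcal{F}_T(u,v)$ is the $U$-coordinate $w^*$ of $P_1 \oplus P_2 = \ominus P_3$, the point $\ominus P_3$ is the third intersection of the curve with the line through $O$ and $P_3$. This second secant passes through the origin, so it has the form $S = n U$ with slope $n = s(w)/w$; using $s(w) = mw + k$ together with the Vieta product $uvw = k(1 - \mu_3 k - \mu_6 k^2)/c_3$ one finds precisely
\[
n = m + uv\,\frac{1 + \mu_2 m + \mu_4 m^2 + \mu_6 m^3}{1 - \mu_3 k - \mu_6 k^2},
\]
which is the auxiliary quantity in the statement. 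Substituting $S = nU$ into \eqref{Ats} yields a second cubic with leading coefficient $1 + \mu_2 n + \mu_4 n^2 + \mu_6 n^3$ and roots $0$, $w$, $w^*$; the factor $1 + \mu_2 n + \mu_4 n^2 + \mu_6 n^3$ in the denominator of \eqref{FG} is exactly this leading coefficient, and Vieta's relations for this cubic express $w^*$ through $w$.

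The remaining task, and the main obstacle, is the algebraic reduction: one must combine the two pairs of Vieta relations (for the secant of slope $m$ and for the secant of slope $n$) with the defining relation for $n$ to eliminate $w$ and $s(w)$ and collapse the resulting rational expression into the compact form \eqref{FG}. The bookkeeping is delicate because several of these relations hold only by virtue of $u$, $v$, $w$ being the genuine roots of the cubics (equivalently, by virtue of the curve equation), so one should keep both Vieta identities for each line available simultaneously and use whichever form avoids spurious denominators. Once the numerator is organized as $u + v$ minus the correction term $uv\,(\mu_1 + \mu_3 m + (\mu_4 + 2\mu_6 m)k)/(1 - \mu_3 k - \mu_6 k^2)$ and the overall prefactor $c_3/\big((1 + \mu_2 n + \mu_4 n^2 + \mu_6 n^3)(1 - \mu_3 k - \mu_6 k^2)\big)$ is split off, the claimed identity with \eqref{FG} follows.
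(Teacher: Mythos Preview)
The paper does not actually prove this theorem: it is quoted verbatim as Theorem~2.3 of \cite{BBKrich} and used as a black box. So there is no ``paper's own proof'' to compare against.

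That said, your outline is exactly the standard derivation and is essentially how the result is obtained in \cite{BBKrich}. The identifications are all correct: substituting $S=mU+k$ into \eqref{Ats} gives a cubic with leading coefficient $c_3=1+\mu_2 m+\mu_4 m^2+\mu_6 m^3$ and constant term $c_0=-k(1-\mu_3 k-\mu_6 k^2)$; the Vieta product $uvw=-c_0/c_3$ then yields $k/w=uv\,c_3/(1-\mu_3 k-\mu_6 k^2)$ and hence your formula for $n=m+k/w$. The line $S=nU$ through $O=(0,0)$ and $P_3=(w,s(w))$ is indeed a projective line in the chart $(u:-1:s)$, so the third intersection on it is $\ominus P_3$ as you claim. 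The only part you defer is the final algebraic reshuffling of the two Vieta relations into the displayed form \eqref{FG}; this is genuinely just bookkeeping (it helps to use $u+v+w=-c_2/c_3$ with $c_2=\mu_1 m+\mu_3 m^2+(\mu_2+2\mu_4 m+3\mu_6 m^2)k$ together with $w+w^*=-n(\mu_1+\mu_3 n)/d_2$), and there is no hidden obstruction.
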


\begin{cor}[Equation (3.3) in \cite{BBKrich}] \label{cor5.2}
\[
\left. {\partial \over \partial v} \mathcal{F}_{T}(u,v) \right|_{v = 0} = 1 - \mu_1 u - \mu_2 u^2 - 2 \mu_3 s(u) - 2 \mu_4 u s(u) - 3 \mu_6 s(u)^2.
\]
\end{cor}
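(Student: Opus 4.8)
\emph{Proof proposal.} The statement is a direct consequence of Theorem~\ref{Adel}: I would differentiate the explicit addition law \eqref{FG} in $v$ and set $v=0$. The plan is to write $\mathcal{F}_T = P\cdot Q$, where $P = u + v - uv\,S$ with $S = \frac{\mu_1 + \mu_3 m + (\mu_4 + 2\mu_6 m)k}{1 - \mu_3 k - \mu_6 k^2}$ is the first factor of \eqref{FG}, and $Q = \frac{1 + \mu_2 m + \mu_4 m^2 + \mu_6 m^3}{(1 + \mu_2 n + \mu_4 n^2 + \mu_6 n^3)(1 - \mu_3 k - \mu_6 k^2)}$ is the second factor. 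Since $\mathcal{F}_T(u,0)=u$, at $v=0$ one has $P|_{v=0}=u$ and $Q|_{v=0}=1$, so the product rule collapses to
\[
\left.\frac{\partial\mathcal{F}_T}{\partial v}\right|_{v=0} = \left.\frac{\partial P}{\partial v}\right|_{v=0} + u\left.\frac{\partial Q}{\partial v}\right|_{v=0}.
\]
Thus it suffices to compute the two $v$-derivatives at $v=0$.

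First I would record the boundary data for the auxiliary functions $m,k,n$. Because $s(u)=u^3+\mu_1 u^4+\cdots$ we have $s(0)=s'(0)=0$, so from the definitions in Theorem~\ref{Adel} one gets at $v=0$ the values $k=0$ and $m=n=m_0$, where $m_0 := s(u)/u$; note $m_0\in\mathbb{Z}[\mu_1,\ldots,\mu_6][[u]]$ since $u^3\mid s(u)$, so no genuine pole occurs. A short differentiation gives $\partial_v m|_{v=0}=m_0/u$ and $\partial_v k|_{v=0}=-m_0$, and, writing $\phi(t)=1+\mu_2 t+\mu_4 t^2+\mu_6 t^3$ and $D:=\phi(m_0)$ for the common value of $\phi$ at $m$ and at $n$ when $v=0$,
\[
\left.\frac{\partial n}{\partial v}\right|_{v=0} = \left.\frac{\partial m}{\partial v}\right|_{v=0} + uD.
\]
This identity, together with $m|_{v=0}=n|_{v=0}$, is the structural fact that makes the computation clean.

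Next I would assemble the two pieces. For the first factor, $\partial_v P|_{v=0} = 1 - u\,S|_{v=0} = 1 - u(\mu_1 + \mu_3 m_0) = 1 - \mu_1 u - \mu_3 s(u)$, using $k|_{v=0}=0$ and $u m_0 = s(u)$. For the second factor I would use logarithmic differentiation of $Q=\phi(m)/\bigl(\phi(n)(1-\mu_3 k - \mu_6 k^2)\bigr)$ together with $Q|_{v=0}=1$, obtaining
\[
\left.\frac{\partial Q}{\partial v}\right|_{v=0} = \frac{\phi'(m_0)}{D}\left(\left.\frac{\partial m}{\partial v}\right|_{v=0} - \left.\frac{\partial n}{\partial v}\right|_{v=0}\right) - \mu_3 m_0 = -u\,\phi'(m_0) - \mu_3 m_0,
\]
where $\phi'(t)=\mu_2+2\mu_4 t+3\mu_6 t^2$ and the term $-\mu_3 m_0$ comes from the denominator factor $1-\mu_3 k-\mu_6 k^2$ via $\partial_v k|_{v=0}=-m_0$. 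Multiplying by $u$ and using $u m_0=s(u)$, $u^2 m_0=u\,s(u)$ and $u^2 m_0^2=s(u)^2$ gives $u\,\partial_v Q|_{v=0} = -\mu_2 u^2 - 2\mu_4 u s(u) - 3\mu_6 s(u)^2 - \mu_3 s(u)$; adding $\partial_v P|_{v=0}$ yields exactly the claimed expression.

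The computation is routine, and the only real care is in the bookkeeping. The main point to get right is the cancellation of the apparent $m_0/u$ contributions: these cancel because $m$ and $n$ coincide at $v=0$ while their $v$-derivatives differ precisely by $uD$, so the $\phi(m)/\phi(n)$ ratio contributes the pole-free quantity $-u\,\phi'(m_0)$. The second subtlety is to attribute the coefficient $2$ in the term $-2\mu_3 s(u)$ to its two independent sources — one from the factor $\mu_3 m$ in $S$ inside $P$, and one from the denominator $1-\mu_3 k-\mu_6 k^2$ of $Q$. No analytic difficulty arises, since every quantity involved is a formal power series in $u$ over $\mathbb{Z}[\mu_1,\ldots,\mu_6]$.
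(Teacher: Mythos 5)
Your computation is correct: all the boundary values ($m|_{v=0}=n|_{v=0}=m_0=s(u)/u$, $k|_{v=0}=0$, $\partial_v m|_{v=0}=m_0/u$, $\partial_v k|_{v=0}=-m_0$, $\partial_v n|_{v=0}=\partial_v m|_{v=0}+u\,\phi(m_0)$) check out, the logarithmic-differentiation step is valid since $Q|_{v=0}=1$, and the assembled terms reproduce the stated formula exactly, including the factor $2$ in $-2\mu_3 s(u)$. The paper itself gives no proof --- it simply cites Equation (3.3) of \cite{BBKrich} --- so your direct derivation from the addition law of Theorem~\ref{Adel} is a legitimate self-contained verification, and there is nothing in the paper to compare it against.
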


\begin{thm}[Theorem 3.1 in \cite{BBKrich}] \label{thexp}
The exponential of the Tate formal group $\mathcal{F}_{T}(u, v)$ is determined by the series $f_T(z) \in \mathbb{Q}[\mu_1, \mu_2, \mu_3, \mu_4, \mu_6][[t]]$
that is the expansion at $t=0$ of the function
\begin{equation} \label{exp}
- 2 { \wp(z) -  {1 \over 12}(4 \mu_2 + \mu_1^2) \over \wp'(z)
- \mu_1 \wp(z) + {1 \over 12} \mu_1 (4 \mu_2 + \mu_1^2) - \mu_3},
\end{equation}
where the expressions for the parameters $g_2$ and $g_3$ of $\wp(z)$ are given by \eqref{g23}.
\end{thm}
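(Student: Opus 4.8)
The plan is to use the classical principle that the exponential of the formal group arising from the geometric (chord--tangent) group law of an elliptic curve, written in a local coordinate at the identity, coincides with the uniformizing map of the curve. For the standard Weierstrass model \eqref{eg} the uniformization $z \mapsto (\wp(z),\wp'(z))$ identifies the curve with $\mathbb{C}/l$, on which the group law is ordinary addition of the parameter $z$. Hence, if $u = u(z)$ denotes the Tate coordinate of the point with parameter $z$, the geometric addition becomes $\mathcal{F}_T(u(z_1),u(z_2)) = u(z_1+z_2)$; provided $u(0)=0$ and $u'(0)=1$, this is exactly the defining equation \eqref{fxy} for the exponential, so it suffices to express $u$ as a function of $z$ and to check the normalization.

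First I would record that the projective-linear change of coordinates of Section \ref{s6}, sending $(X:Y:Z)\mapsto\left(X+\tfrac{\nu_2}{2}Z:2Y+\mu_1 X+\mu_3 Z:Z\right)$, is an isomorphism of the general model \eqref{A1} onto the standard model \eqref{eg}. Being linear it preserves collinearity, and it fixes the point at infinity $(0:1:0)$ (the identity); therefore it carries the chord--tangent group law of \eqref{A1} to that of \eqref{eg} and is a group isomorphism. Substituting the Tate coordinates $(X:Y:Z)=(u:-1:s)$ and passing to affine Weierstrass coordinates $(x,y)=(\wp(z),\wp'(z))$ gives
$$\wp(z) = \frac{u}{s} + \frac{\nu_2}{2}, \qquad \wp'(z) = \frac{\mu_1 u - 2}{s} + \mu_3.$$

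Next I would solve this linear system for $u$. From the first relation $u/s = \wp(z)-\tfrac{\nu_2}{2}$, and substituting into the second, one gets $\tfrac{2}{s} = -\wp'(z)+\mu_1\wp(z)-\tfrac{\mu_1\nu_2}{2}+\mu_3$; eliminating $s$ then yields
$$u = \frac{2\bigl(\wp(z)-\tfrac{\nu_2}{2}\bigr)}{-\wp'(z)+\mu_1\wp(z)-\tfrac{\mu_1\nu_2}{2}+\mu_3} = -2\,\frac{\wp(z)-\tfrac{\nu_2}{2}}{\wp'(z)-\mu_1\wp(z)+\tfrac{\mu_1\nu_2}{2}-\mu_3}.$$
Using $\nu_2=(\mu_1^2+4\mu_2)/6$, so that $\tfrac{\nu_2}{2}=\tfrac{1}{12}(4\mu_2+\mu_1^2)$ and $\tfrac{\mu_1\nu_2}{2}=\tfrac{1}{12}\mu_1(4\mu_2+\mu_1^2)$, this is precisely the expression \eqref{exp}, with $g_2,g_3$ as in \eqref{g23}.

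It remains to verify the normalization. Using the Laurent expansions $\wp(z)=z^{-2}+O(z^2)$ and $\wp'(z)=-2z^{-3}+O(z)$ at $z=0$, the numerator of \eqref{exp} behaves like $-2z^{-2}$ and the denominator like $-2z^{-3}$, so $u = z + O(z^2)$; hence $u(0)=0$ and $u'(0)=1$, completing the identification of $u(z)$ with the exponential. That the coefficients lie in $\mathbb{Q}[\mu_1,\mu_2,\mu_3,\mu_4,\mu_6]$ follows because $s(u)\in\mathbb{Z}[\mu][[u]]$, so the logarithm, and hence the exponential, of $\mathcal{F}_T$ is defined over $\mathbb{Q}[\mu]$. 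The main obstacle is not any computation but the clean justification of the first paragraph: one must be sure that the geometrically defined Tate formal group agrees, through the $\wp$-uniformization and the linear change of coordinates, with translation on $\mathbb{C}/l$, so that $u(z)$ genuinely satisfies \eqref{fxy}. Once this compatibility is established, the rest is the elementary elimination carried out above.
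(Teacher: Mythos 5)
Your proof is correct: the paper itself gives no argument for this statement (it is quoted as Theorem 3.1 of the cited reference \cite{BBKrich}), and your derivation --- transporting the chord--tangent group law through the linear change of coordinates to the $\wp$-uniformized standard model, solving $\wp(z)=u/s+\nu_2/2$, $\wp'(z)=(\mu_1 u-2)/s+\mu_3$ for $u$, and checking $u(z)=z+O(z^2)$ --- is exactly the computation underlying that reference. No gaps; the only point worth stating explicitly is the one you already flag, namely that the projective-linear map fixes $(0:1:0)$ and preserves collinearity, hence is a group isomorphism, so that $u(z)$ satisfies \eqref{fxy}.
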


\begin{prb}
 Find the ring of coefficients $\mathcal{R}_T$ of the universal Tate formal group.
\end{prb}

\section{Rings of coefficients for universal formal groups that correspond to~elliptic~genus~of~level~$2$} \label{e2}
\begin{thm}[{\cite{Buc90}}, see also Theorem 7.3 in {\cite{B19}}] \label{T2}
The elliptic function of level $2$ is~the~exponential of the universal formal group of the form
\begin{equation} \label{f2}
 F(u,v)=\frac{u^2 -v^2}{u B(v)-v B(u)}, \qquad B(0) = 1, \quad B'(0)=0. 
\end{equation}
\end{thm}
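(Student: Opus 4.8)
The plan is to identify the form~\eqref{f2} as the specialization $A(u)\equiv 1$ of the Buchstaber formal group~\eqref{FB} and to match its exponential with the level~$2$ function through Corollary~\ref{CorKri}. First I would record that for $N=2$ Corollary~\ref{CorKri} gives $\rho=\omega/2$ and $\alpha=\zeta(\omega/2)-2\zeta(\omega/2)/2=0$, so that the candidate exponential is the Krichever function~\eqref{fKr} with vanishing $\alpha$ and half-period $\rho$,
\[
f(z)=\frac{\sigma(z)\,\sigma(\omega/2)}{\sigma(\omega/2-z)}\,\exp(-\zeta(\omega/2)\,z).
\]
The first step is to show that this $f$ is \emph{odd}. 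Using $\sigma(-z)=-\sigma(z)$ together with the quasi-periodicity $\sigma(z+\omega)=-\exp(2\zeta(\omega/2)(z+\omega/2))\,\sigma(z)$, one obtains $\sigma(\omega/2-z)=\exp(-2\zeta(\omega/2)z)\,\sigma(z+\omega/2)$, and substituting this into the displayed formula yields $f(-z)=-f(z)$. Conversely, oddness of a Krichever function~\eqref{fKr} forces $\alpha=0$ and $\rho$ a half-period, i.e.\ precisely level~$2$.

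Next I would translate oddness into the structure of the Buchstaber group. For the general group~\eqref{FB}, differentiating in $v$ at $v=0$ gives $\partial_v F(u,v)|_{v=0}=B(u)+(A_1-B_1)u$, which equals $f'(g(u))$ where $g$ is the logarithm; with the normalization $A_2=B_1=0$ this reads $f'(g(u))=B(u)+A_1 u$. Since $f$ is odd, $f'$ is even and $g$ is odd, so $f'(g(u))$ is even; hence $A_1=0$ and $B$ is an even series with $B(0)=1$. It remains to prove $A(u)\equiv 1$, which, after the substitution $u=f(x)$, $v=f(y)$ and the identity $B(f(z))=f'(z)$, is equivalent to the \emph{addition theorem}
\[
f(x+y)\,[\,f(x)f'(y)-f(y)f'(x)\,]=f(x)^2-f(y)^2.
\]

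The engine for the addition theorem is Liouville's theorem. Fixing $y$ and setting $H(x)=f(x+y)[f(x)f'(y)-f(y)f'(x)]-f(x)^2+f(y)^2$, the full periodicity $f(z+\omega)=f(z)$ makes $H$ invariant under $x\mapsto x+\omega$, while the sign change $f(z+\omega')=-f(z)$ of Lemma~\ref{llN} flips the sign of each of $f(x+y)$, $f(x)$, $f'(x)$ and so leaves $H$ invariant under $x\mapsto x+\omega'$; thus $H$ is elliptic with lattice $l$. One then checks that the only possible poles, at $x\equiv\rho$, $x\equiv\rho-y$ and $x\equiv-y$ modulo $l$, are cancelled by pairing $f^2$ against the antisymmetric factor $f(x)f'(y)-f(y)f'(x)$ and against $f(x+y)$, so that $H$ is holomorphic, hence constant; evaluating at $x=0$ gives $H\equiv 0$. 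This establishes $F(u,v)=(u^2-v^2)/(uB(v)-vB(u))$ with $B(f(z))=f'(z)$, i.e.\ $A(u)\equiv 1$.

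Finally, for the word \emph{universal} I would argue that $B$ (equivalently the classifying homomorphism) is in bijection with the lattice data of a level-$2$ function. Over the coefficient ring of the universal group of form~\eqref{f2} the exponential is, by the Buchstaber--Krichever correspondence (\cite{Buc90}, \cite[Theorem E 5.4]{T}) specialized to $A\equiv 1$, the universal $\alpha=0$ Krichever function; since $\alpha=0$ with $\rho$ a half-period is exactly the level-$2$ locus, this identifies the universal exponential with the universal level-$2$ function. The main obstacle I expect is twofold: carrying out the residue bookkeeping at $x\equiv\rho$ and $x\equiv\rho-y$ that makes $H$ pole-free, which is the genuinely computational heart of the addition theorem, and verifying that the associativity relation~\eqref{ass} imposed on~\eqref{f2} cuts out exactly the two-parameter elliptic family with no spurious components, so that the group of form~\eqref{f2} is indeed the universal one and its ring of coefficients is torsion-free in the sense of Theorem~\ref{tBU}.
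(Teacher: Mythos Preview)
The paper does not supply its own proof of Theorem~\ref{T2}; the result is simply quoted with attribution to \cite{Buc90} and \cite[Theorem~7.3]{B19}, and the text moves on to the lemma about $R_2$. So there is no in-paper argument to compare against. Your outline is a faithful reconstruction of the classical route from \cite{Buc90}: specialize Corollary~\ref{CorKri} to $N=2$ to get $\alpha=0$ and $\rho=\omega/2$, observe that the resulting Krichever function is odd, read off $A_1=0$ and the evenness of $B$ from $\partial_vF|_{v=0}=f'(g(u))$, and then establish $A\equiv 1$ via the addition identity $f(x+y)\,[f(x)f'(y)-f(y)f'(x)]=f(x)^2-f(y)^2$. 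In \cite{Buc90} this last identity is obtained from the explicit description $f(z)^2=1/(\wp(z)-\wp(\omega/2))$ together with the standard addition theorem for $\wp$, rather than by a direct Liouville argument; your Liouville approach is equivalent but makes the residue bookkeeping at $x\equiv\rho$ and $x\equiv\rho-y$ the load-bearing step (the pole you list at $x\equiv -y$ is spurious, since $f$ and $f'$ are regular there).

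The genuinely thin spot is the last paragraph on \emph{universality}. What you have written proves one direction: every level-$2$ function is the exponential of some formal group of the form~\eqref{f2}, i.e.\ you produce a classifying homomorphism $R_2\to(\text{parameter ring})$. To conclude that the exponential of the \emph{universal} group of form~\eqref{f2} is the generic level-$2$ function you also need the converse: that associativity of~\eqref{f2} forces the exponential to be an odd specialization of the Krichever function (equivalently, a solution of~\eqref{feq} with the odd-parity constraints). This is precisely the content supplied in the references \cite{Buc90}, \cite{B19} and is what your closing sentence flags as an obstacle; without it the word ``universal'' in the statement is not yet justified.
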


\begin{lem}
 Set $R_2 = \mathbb{Z}[B_2, B_3, B_4, \ldots]/I_{ass}$, where $I_{ass}$ is the associativity ideal generated by the relation \eqref{ass} for the formal group \eqref{f2}. Then the formal group \eqref{f2} is~generating over the ring $R_2$.
\end{lem}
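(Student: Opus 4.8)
The plan is to show that the formal group law \eqref{f2} over the quotient ring $R_2 = \mathbb{Z}[B_2, B_3, B_4, \ldots]/I_{ass}$ is generating, that is, that the ring $R_2$ is multiplicatively generated by the coefficients $a_{i,j}$ of the expansion $F(u,v) = u + v + \sum_{i,j \geqslant 1} a_{i,j} u^i v^j$. Since $R_2$ is by construction generated as a ring by the symbols $B_2, B_3, B_4, \ldots$, it suffices to prove that each generator $B_k$ lies in the subring $R_F$ generated by the coefficients $a_{i,j}$; the reverse inclusion $R_F \subseteq R_2$ is automatic, because each $a_{i,j}$ is a polynomial in the $B_k$ by the very definition \eqref{f2}. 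So the whole statement reduces to expressing every $B_k$ as a polynomial (with integer coefficients) in the $a_{i,j}$.

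First I would clear denominators in \eqref{f2}, writing it as the relation $(u\,B(v) - v\,B(u)) \, F(u,v) = u^2 - v^2$, and substitute the expansion $F(u,v) = u + v + \sum a_{i,j} u^i v^j$ together with $B(u) = 1 + \sum_{k \geqslant 2} B_k u^k$ (recall $B_1 = 0$). Comparing coefficients of fixed monomials $u^p v^q$ gives a system of polynomial identities linking the $B_k$ to the $a_{i,j}$. The key is to extract, from the low-weight part of this system, a triangular relation in which each $B_k$ appears linearly with unit coefficient plus a correction that is a polynomial in the $a_{i,j}$ and in the lower $B_\ell$ with $\ell < k$. Concretely, I expect the coefficient of an appropriately chosen monomial of total degree $k+1$ to isolate $B_k$ modulo lower-order data, so that an induction on $k$ then expresses $B_k$ as an integer polynomial in the $a_{i,j}$.

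The natural place to read off $B_k$ is to specialize $v = 0$ or to look at the lowest diagonal. Setting $v = 0$ makes \eqref{f2} trivial, so instead I would examine the coefficient of $u^{k} v$ (or symmetrically $u v^{k}$) in the cleared identity: expanding $u\,B(v) - v\,B(u)$ to first order in $v$ and matching against $(u^2 - v^2) / F(u,v)$ shows that $B_k$ enters through the term $-v\, B_k u^k$ on the left, while the matching contribution on the right is a known polynomial in the $a_{i,j}$ and the already-determined $B_\ell$ for $\ell < k$. This yields the desired recursion $B_k = (\text{integer polynomial in } a_{i,j}, B_2, \ldots, B_{k-1})$, and feeding in the inductive hypothesis expresses $B_k$ purely in terms of the $a_{i,j}$.

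The main obstacle I anticipate is \emph{bookkeeping rather than depth}: one must verify that the monomial chosen to isolate $B_k$ really does so with an invertible (indeed unit) coefficient over $\mathbb{Z}$, so that no denominators are introduced and the argument stays integral, and one must keep track of how the associativity relations in $I_{ass}$ interact with this extraction—i.e. confirm that passing to the quotient $R_2$ does not destroy the triangular structure. Since the relation \eqref{f2} is precisely the defining form \eqref{FB} with $A(u) \equiv 1$, this lemma is the level-$2$ analogue of Lemma~\ref{lr}, and I would expect its proof to be a direct specialization of the argument proving Lemma~\ref{lr} (Lemma 2.9 in \cite{BU}): one sets $A(u) = 1$ throughout, discards the $A_k$ generators, and checks that the generating property for the $B_k$ survives verbatim. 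Thus the cleanest route is to invoke the structure of that proof and simply restrict it to the sub-case $A \equiv 1$, verifying that the recursion for the $B_k$ is unaffected by the vanishing of the $A$-coefficients.
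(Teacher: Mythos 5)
Your proposal is correct, and your closing paragraph is in fact exactly the paper's entire proof: the paper simply observes that \eqref{f2} is the specialization of the Buchstaber form \eqref{FB} obtained by setting $A(u)=1$, so that $R_2$ is the corresponding quotient of $R_B$ and the generating property follows at once from Lemma~\ref{lr}. The direct computational route you sketch in the body also works, and is considerably less delicate than you fear: clearing denominators to $(u\,B(v)-v\,B(u))\,F(u,v)=u^2-v^2$ and comparing coefficients of $u^{n+1}v$ (using $F(u,0)=u$, so the only $v^0$ term of $F$ is $u$) gives the exact identity $B_n = a_{n,1}$ for all $n\geqslant 2$ --- no recursion, no lower-order corrections, and no interaction with $I_{\mathrm{ass}}$ to worry about, since this identity already holds in the polynomial ring before passing to the quotient. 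So either route closes the argument; the paper's is shorter because Lemma~\ref{lr} has already done the coefficient extraction for the general Buchstaber form, while your direct version has the minor virtue of exhibiting the generators $B_n$ as specific coefficients $a_{n,1}$ of the formal group.
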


\begin{proof}
The formal group \eqref{f2} is a specialization of the Buchstaber formal group \eqref{FB} determined by the relation
\begin{equation} \label{F2!}
A(u) = 1.
\end{equation}
Thus the Lemma follows from Lemma \ref{lr}.
\end{proof}

\begin{thm}[Theorem 8.2 in \cite{BU}] \label{t63}
The ring $R_2$ is multiplicatively generated by the elements $B_n$, where $n = 2^k$, $k \geqslant 1$. For the function $\rho(n) = \rho_2(n)$ we have 
\[
\rho_2(n) = \left\{ \begin{matrix}
                     \infty, & \text{ for } n = 2,4,\\
                     2, & \text{ for } n = 2^k, k\geqslant 3,\\
                     1 & \text{ in the other cases.} 
                    \end{matrix}
 \right.
\]
The ring $R_2$ is torsion-free.
\end{thm}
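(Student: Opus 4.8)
The plan is to realize \eqref{f2} as the specialization $A(u)=1$ of the Buchstaber formal group and to read off the structure of $R_2$ from the behaviour of the coefficients modulo the decomposables $I^2$. First I would record the classifying epimorphism $\phi\colon R_B\to R_2$ sending $A_k\mapsto 0$, $B_k\mapsto B_k$, so that $h_2=\phi\,h_B$ and in particular $\phi(h_B(e_n))=h_2(e_n)$; this transports the machinery of Section \ref{s3} to the level-$2$ situation. Expanding \eqref{f2} directly, after cancelling the common factor $u-v$ one gets $F(u,v)=(u+v)\big/\big(1-uv\,S(u,v)\big)$ with $S$ linear in the $B_k$, and the part of $F$ linear in $B_n$ is $uv(u+v)\frac{u^{n-1}-v^{n-1}}{u-v}$. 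Hence modulo $I^2$ the coefficient $a_{i,n+1-i}$ equals $\bar B_n$ at the endpoints $i\in\{1,n\}$ and $2\bar B_n$ in the interior $2\le i\le n-1$, where $\bar B_n$ is the image of $B_n$. Comparing with the Lazard normalization built into \eqref{en}, namely $a_{i,n+1-i}\equiv h_2(e_n)\binom{n+1}{i}/d(n+1)\pmod{I^2}$, the endpoint gives $\bar B_n=\frac{n+1}{d(n+1)}h_2(e_n)$ while the interior indices give the torsion relations $\big(\frac{\binom{n+1}{i}}{d(n+1)}-2\frac{n+1}{d(n+1)}\big)h_2(e_n)=0$ for $2\le i\le n-1$.

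The combinatorial heart of the argument is then the integer $g_n=\frac{1}{d(n+1)}\gcd_{2\le i\le n-1}\big(\binom{n+1}{i}-2(n+1)\big)$, which by the previous paragraph annihilates $h_2(e_n)$ in $R_2/I^2$. I would prove, by an elementary computation on binomial coefficients using the values of $d(n+1)$ recalled in Section \ref{s3}, that $g_n=0$ for $n=2,4$ (no interior index, respectively all terms vanishing), that $g_n=2$ for $n=2^k$ with $k\ge3$, and that $g_n=1$ for every other $n\ge3$. This yields at once: for $n\ge3$ not a power of $2$ we have $h_2(e_n)=0$, so $\bar B_n=\frac{n+1}{d(n+1)}h_2(e_n)$ is decomposable; for $n=2,4$ there is no relation; and for $n=2^k$, $k\ge3$, the order of $h_2(e_n)$ divides $2$. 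Since $2^k+1$ is odd, the ratio $\frac{2^k+1}{d(2^k+1)}$ is odd, so in every case $\bar B_{2^k}\equiv h_2(e_{2^k})\pmod{I^2}$. Thus $R_2$ is multiplicatively generated by the $B_{2^k}$, $k\ge1$, and $\rho_2(n)$ is bounded above by the values claimed in the theorem.

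What remains, and what I expect to be the main obstacle, is the exactness of these bounds together with torsion-freeness of $R_2$. For the lower bounds I would argue that $h_2(e_2),h_2(e_4)$ have infinite order because they map to the two algebraically independent moduli of the universal level-$2$ elliptic formal group over $\mathbb{Q}$ (the rationalization $R_2\otimes\mathbb{Q}$ is a polynomial ring on two generators, in degrees $4$ and $8$), while for $k\ge3$ one must exhibit $h_2(e_{2^k})$ as nonzero modulo $I^2$, i.e. nonzero modulo $2$, so that $\rho_2(2^k)$ is exactly $2$. The cleanest route is to pass to the explicit uniformization: by Theorem \ref{T2} the exponential of \eqref{f2} is the non-degenerate elliptic function of level $2$, whose Taylor coefficients are explicit polynomials in $\delta,\varepsilon$; realizing $F$ over this concrete ring gives a homomorphism $R_2\to\mathbb{Q}[\delta,\varepsilon]$ whose injectivity is equivalent to torsion-freeness of $R_2$ and which simultaneously pins down the $2$-adic valuations forcing order exactly $2$ at $n=2^k$. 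The delicate point is that no associativity relation introduces additive torsion; equivalently, that the order-two torsion ideal $I_B$ of $R_B$ from Theorem \ref{tBU} dies under $\phi$ and that the halving relations $2B_{2^k}\in\mathbb{Z}[B_2,\dots,B_{2^{k-1}}]$ assemble into a torsion-free ring. This is where the argument has to be carried out by hand rather than by the formal cocycle bookkeeping of the first two paragraphs.
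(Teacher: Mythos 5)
First, a framing remark: the paper contains no proof of Theorem \ref{t63} --- it is quoted from \cite{BU} (Theorem 8.2) --- so there is no in-paper argument to compare against; I am assessing your proposal on its own terms and against what that theorem actually requires.

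Your first two paragraphs are correct and are essentially the standard first half of such an argument: writing $F(u,v)=(u+v)/(1-uvS)$, observing that modulo $I^2$ the coefficient $a_{i,n+1-i}$ is $c_i\bar B_n$ with $c_i=1$ at the endpoints and $c_i=2$ in the interior, and playing this against the Lazard congruence $a_{i,n+1-i}\equiv \bigl(\tbinom{n+1}{i}/d(n+1)\bigr)h_2(e_n)$ to conclude that $g_n$ annihilates $h_2(e_n)$ in $R_2/I^2$. Two caveats even here. The evaluation of $g_n$ is asserted rather than proved, and it is not immediate: for instance for $n=26$ the partial gcds over the first several interior indices stabilize at $3\,d(27)$ and only drop to $d(27)$ at larger $i$, so the ``elementary computation'' genuinely requires a Kummer-type argument over all interior $i$, not a spot check. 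This is provable but is real work that the proposal omits.

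The genuine gap is the entire second half, which you yourself flag as ``the main obstacle'' and then only sketch. The congruence bookkeeping yields only that $R_2$ is generated by the $B_{2^k}$ and that $\rho_2(n)$ \emph{divides} the claimed values; it cannot show that $h_2(e_{2^k})\neq 0$ in $R_2/I^2$ for $k\geqslant 3$ (so that $\rho_2(2^k)=2$ rather than $1$), that $h_2(e_2),h_2(e_4)$ have infinite order, or that $R_2$ is torsion-free, because the associativity ideal $I_{ass}$ could a priori impose further relations not visible in the linear-in-$B$ analysis. Settling these points requires integral (not just rational) control of $I_{ass}$: one must show that $\mathbb{Z}[B_2,B_3,\dots]/I_{ass}$ injects into $\mathbb{Z}[\mu_2,\mu_4]$, i.e.\ that $I_{ass}$ is the full kernel of the map sending the $B_n$ to the coefficients of $\sqrt{(1-\mu_2u^2)^2-4\mu_4u^4}$ over $\mathbb{Z}$ and not merely over $\mathbb{Q}$, and then carry out the $2$-adic indecomposability computation for $B_{2^k}$ in that image. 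Your remark that injectivity is ``equivalent'' to torsion-freeness also silently uses that $R_2\otimes\mathbb{Q}$ is a polynomial ring on two generators, which itself needs the rational classification of associative groups of the form \eqref{f2}. None of this is carried out, so the proposal establishes the generation statement and the upper bounds on $\rho_2$ but not the exact values of $\rho_2$ or the torsion-freeness claim, which are the substantive assertions of the theorem.
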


\begin{rem}
 As the ring $R_2$ has only one element of each weight, one can replace the generating elements $h_2(e_n)$ by $B_n$ in Theorem \ref{t63}.
\end{rem}

\begin{thm}[Theorem 7.4 in \cite{B19}] \label{T23}
The elliptic function of level $2$ is the exponential of the universal formal group,
which is a specialization of Buchstaber formal group \eqref{FB},
determined by the relations
\begin{align} \label{F2z}
A_1 &= 0, & A_3 &= 0.
\end{align} 
\end{thm}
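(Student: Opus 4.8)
The plan is to exploit the fact that the level $2$ elliptic function is odd, to read off the two relations from the low-order jet of the Buchstaber formal group, and then to use the Krichever differential equation \eqref{feq} to propagate oddness to all orders.

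First I would record the dictionary between the coefficients $A_1,A_3$ and the Taylor coefficients of the exponential $f(z)=z+f_1 z^2+f_2 z^3+\dots$. For any formal group one has $\partial_v F(u,0)=1/g'(u)$, where $g=f^{-1}$; computing the same derivative directly for the Buchstaber form \eqref{FB} (using $A_2=B_1=0$) gives $\partial_v F(u,0)=B(u)+A_1 u$, and comparing the $u$-coefficients yields $A_1=2f_1$. Differentiating once more, $\partial_v^2 F(u,0)=f''(g(u))+f'(g(u))\,g''(0)$; since $g''(0)=-2f_1$, under $A_1=0$ this reduces to $f''(g(u))$, and extracting the $u^2$-coefficient of the corresponding rational expression in $A,B$ gives $A_3=2f_3$. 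Hence the relations $A_1=A_3=0$ are equivalent to $f_1=f_3=0$.

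Next I would bring in the Krichever equation \eqref{feq}, which the exponential of every Buchstaber formal group satisfies. Matching the low-order coefficients of \eqref{feq} expresses $q_1,q_2,q_3$ through $f_1,f_2,f_3$; in particular $q_1=-f_1$ and, when $f_1=0$, also $q_3=-f_3$. Thus $A_1=A_3=0$ is exactly $q_1=q_3=0$, and \eqref{feq} collapses to $f f'''-3f'f''=12 q_2 f f'$. A direct check shows that, together with $f$, the series $\tilde f(z)=-f(-z)$ solves this reduced equation and has the same first three Taylor coefficients as $f$; since the coefficient recursion produced by \eqref{feq} determines the whole series from $(f_1,f_2,f_3)$, it follows that $f=\tilde f$, i.e. $f$ is odd. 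An odd non-degenerate Krichever function is an elliptic function of level at most $2$, and by the minimality requirement in Definition \ref{d12} it is the level $2$ function (the Ochanine--Witten case); this proves that the specialization $A_1=A_3=0$ indeed has the level $2$ elliptic function as its exponential.

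Conversely, by Corollary \ref{CorKri} the level $2$ function has $\alpha=\zeta(\omega/2)-\zeta(\omega/2)=0$, and the explicit form \eqref{fKr} together with the quasi-periodicity of $\sigma$ and $\zeta$ gives $f(-z)=-f(z)$; oddness forces $f_1=f_3=0$, whence $A_1=A_3=0$. It then remains to identify this specialization with the universal object of Theorem \ref{T2}: invoking the uniqueness of the Buchstaber representation $(A,B)$ under the normalization $A_0=B_0=1$, $A_2=B_1=0$, the oddness of $f$ forces $A(u)=1$, so the quotient of $R_B$ by the relations $A_1=A_3=0$ and the associativity ideal coincides with the ring $R_2$. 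The main obstacle is precisely this last step: showing that the two explicit relations $A_1=0$, $A_3=0$, combined with associativity, already force the full reduction to the level $2$ ring, equivalently that associativity propagates them to $A(u)=1$ and to the vanishing of all odd coefficients of $B$. The jet-and-ODE argument controls the exponential cleanly, but pinning down the ring-theoretic universality requires the same kind of analysis of the associativity relations as carried out in \cite{BU}.
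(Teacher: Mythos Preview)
The paper does not prove this theorem here; it is quoted from \cite{B19}. Your analytic argument for the exponential is essentially the natural one and is correct: $A_1=2f_1$, and (under $A_1=0$) $A_3=2f_3$, so the relations \eqref{F2z} are equivalent to $q_1=q_3=0$ in \eqref{feq}, and the symmetry $f\mapsto -f(-\cdot)$ of the reduced equation forces $f$ to be odd, hence a Krichever function with $2\rho\in l$, i.e.\ the level~$2$ function. One slip: the recursion from \eqref{feq} does \emph{not} determine the series from $(f_1,f_2,f_3)$ alone. The coefficient of $f_{n+1}$ in the $z^n$-term of $ff'''-3f'f''$ is $(n+2)(n+1)(n-3)$, which vanishes at $n=3$; thus $f_4$ is a fourth free parameter (this is exactly why the Buchstaber group has four moduli). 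Your argument survives because $\tilde f_4=f_4$ automatically, but you should say so.

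The genuine problem is your final paragraph. You interpret the theorem as asserting that $R_B/J_2$ coincides with the ring $R_2$ of Theorem~\ref{T2}, and flag proving this as the ``main obstacle''. But the paper records immediately after Theorem~\ref{T23} that this is false: over $R_B/J_2$ one has $\rho(n)=\rho_B(n)$ for all $n\ne 1,3$, so by Theorem~\ref{tBU} the ring $R_B/J_2$ carries $p$-torsion for every odd prime $p$ and extra $2$-torsion from the generators $h_B(e_{2^k-2})$, whereas $R_2$ is torsion-free (Theorem~\ref{t63}). The relations $A_1=A_3=0$ together with associativity do \emph{not} force $A(u)=1$ over $\mathbb{Z}$, only over $\mathbb{Q}$. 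The phrase ``the universal formal group'' in the statement refers to the Buchstaber group over $R_B/J_2$, not to the group of Theorem~\ref{T2}; these have the same exponential (which lives over the common rationalization) but different coefficient rings. So there is no obstacle to remove: once you have shown that the exponential over $(R_B/J_2)\otimes\mathbb{Q}$ is the level-$2$ function, the theorem is proved.
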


\begin{cor}
 The ring of coefficients of the formal group from Theorem \ref{T23} is~$R_B / J_2$, where $J_2$ is the ideal generated by the relations \eqref{F2z}. The formal group is generating over~this ring.
\end{cor}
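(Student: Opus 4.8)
The plan is to deduce the statement directly from Lemma~\ref{lr} by a quotient argument, paralleling the argument used above for the specialization $A(u) = 1$. First I would observe that the formal group of Theorem~\ref{T23} is obtained from the Buchstaber formal group \eqref{FB} over $R_B$ by reducing its coefficients modulo the ideal $J_2 = (A_1, A_3) \subset R_B$: the relations \eqref{F2z} define precisely the quotient homomorphism $\pi \colon R_B \to R_B/J_2$, and applying $\pi$ to the coefficients $a_{i,j}$ of \eqref{FB} yields a formal group $\bar F(u,v) = u + v + \sum \pi(a_{i,j}) u^i v^j$ over $R_B/J_2$. Here associativity is automatic, since \eqref{ass} is preserved by any ring homomorphism applied coefficientwise.

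Next I would invoke the general principle that the generating property descends to quotients. By Lemma~\ref{lr} the Buchstaber formal group is generating over $R_B$, which by definition means that $R_B$ coincides with the subring generated by its coefficients $a_{i,j}$. Since $\pi$ is surjective, the subring of $R_B/J_2$ generated by the images $\pi(a_{i,j})$ equals the image under $\pi$ of the subring generated by the $a_{i,j}$, that is $\pi(R_B) = R_B/J_2$. Hence the ring of coefficients of $\bar F$ is all of $R_B/J_2$, and $\bar F$ is generating over $R_B/J_2$. This gives both assertions of the corollary at once, the second being a restatement of the first once $\bar F$ is exhibited over $R_B/J_2$.

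The only point requiring care is the identification of the specialization of Theorem~\ref{T23} with the quotient $R_B/J_2$, rather than with some strictly smaller quotient: one must check that imposing $A_1 = A_3 = 0$ introduces no associativity relations beyond those already recorded in $I_{ass}$, i.e. that the associativity ideal for the specialized form is exactly the reduction of $I_{ass}$. This is immediate from the presentation $R_B = \mathbb{Z}[A_1, A_3, A_4, \ldots, B_2, B_3, \ldots]/I_{ass}$, since quotienting by $(A_1, A_3)$ manifestly yields $\mathbb{Z}[A_4, A_5, \ldots, B_2, B_3, \ldots]/\bar I_{ass}$, with $\bar I_{ass}$ the reduction of the associativity relations, which is precisely the ring of coefficients of the specialized formal group. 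I expect this bookkeeping to be the main (and only mild) obstacle; the generating statement itself is then formal.
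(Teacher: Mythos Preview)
Your proposal is correct and matches the paper's implicit reasoning: the corollary is stated without proof, being an immediate consequence of Lemma~\ref{lr} and the presentation of $R_B$. Your quotient argument (surjectivity of $\pi$ forces the generating property to descend, and the presentation $R_B = \mathbb{Z}[A_1, A_3, A_4, \ldots, B_2, B_3, \ldots]/I_{ass}$ makes the identification with $R_B/J_2$ automatic) is exactly the verification the paper leaves to the reader.
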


 Over the ring $R_B / J_2$ we have $\rho(n) = \rho_B(n)$ for $n \ne 1,3$ and $\rho(1) = \rho(3) = 1$.

\begin{cor}
For the classifying homomorphism 
$h_F\colon \R_U \to R_B / J_2$ the image of~the elements $e_n$ for  $n = p^r$ (where $r \geqslant 1$ and $p$ is prime, $p \ne 2$, $n \ne 3$) give elemets of~torsion~$p$ and the elements $e_n$ for $n = 2^k-2$ (where $k \geqslant 3$) 
give elements of torsion $2$.
\end{cor}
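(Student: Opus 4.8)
The plan is to prove each torsion claim by establishing two facts for the element $x = h_F(e_n)$ in $R_B/J_2$: that $x \neq 0$, and that $t \cdot x = 0$ for the relevant prime ($t = p$, respectively $t = 2$). Since $t$ is prime, these together force the order of $x$ to be exactly $t$. Non-vanishing is immediate from the observation stated just before the corollary: for every $n$ in the two families we have $\rho(n) = \rho_B(n) > 1$, so the class of $h_F(e_n)$ in $(R_B/J_2)/I^2$ is nonzero, whence $h_F(e_n) \neq 0$ in $R_B/J_2$. It remains to produce the annihilation relations. Throughout, recall that $h_F(e_n)$ has weight $-2n$; I call $n$ its \emph{index} and speak of elements of odd or even index according to the parity of $n$.

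For $n = 2^k - 2$ with $k \geq 3$ the relation is inherited from $R_B$. By Theorem \ref{tBU} the element $h_B(e_{2^k-2})$ generates the ideal $I_B$ of order-two torsion of $R_B$, so $2\,h_B(e_{2^k-2}) = 0$ already in $R_B$; applying the projection $R_B \to R_B/J_2$ gives $2\,h_F(e_{2^k-2}) = 0$. Together with non-vanishing this yields order exactly two.

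For $n = p^r$ with $p$ odd and $n \neq 3$ the relation must be created by the passage to $R_B/J_2$, since by Theorem \ref{tBU} these elements are not torsion in $R_B$ (the only torsion there has order two). I would exploit the further specialization $A(u) \equiv 1$, which gives a graded surjection $\phi\colon R_B/J_2 \twoheadrightarrow R_2$ with kernel generated by $A_4, A_5, \ldots$. By Theorems \ref{T2} and \ref{T23} both $R_2$ and $R_B/J_2$ are coefficient rings of the universal formal group whose exponential is the elliptic function of level $2$, so $\phi$ becomes an isomorphism after $\otimes\,\mathbb{Q}$ (a graded surjection between abstractly isomorphic graded algebras with finite-dimensional graded pieces), and hence $\ker\phi$ is torsion. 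Because $R_2$ is generated by the $B_{2^k}$ and therefore concentrated in even index (Theorem \ref{t63}), every element of odd index in $R_B/J_2$ lies in $\ker\phi$ and is thus torsion; in particular $h_F(e_{p^r})$ is torsion, and from $\rho(p^r) = p$ we already know $p\,h_F(e_{p^r}) \in I^2$.

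The remaining, and hardest, step is to upgrade $p\,h_F(e_{p^r}) \in I^2$ to $p\,h_F(e_{p^r}) = 0$. Here I would analyze the decomposables of odd index directly over the generators of $R_B/J_2$, whose indices are $2,4$, the prime powers different from $2,3,4$, and the numbers $2^k-2$; the only odd indices among these are the odd prime powers $\geq 5$. A parity count shows that each monomial of odd index $p^r$ must contain an odd-index generator $e_{q^s}$ with $q^s < p^r$, so the decomposable part of $p\,h_F(e_{p^r})$ is a combination of lower odd-prime-power generators, which an induction on $p^r$ assumes already annihilated. The base case $p^r = 5$ is clean: there is no partition of $5$ into allowed parts $\geq 2$, so $I^2$ vanishes in index $5$ and $5\,h_F(e_5) = 0$ outright. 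The obstacle is the inductive step: from a relation $p\,h_F(e_{p^r}) = \sum_\alpha c_\alpha\,(\text{monomial}_\alpha)$ one must show the right-hand side vanishes rather than merely being torsion of possibly larger order, which requires controlling the integer coefficients $c_\alpha$ arising from the associativity relations of the Buchstaber group reduced modulo $J_2$. Pinning down these coefficients — equivalently, proving that the odd-index part of $R_B/J_2$ is annihilated by the corresponding prime — is where the substantive computation lies.
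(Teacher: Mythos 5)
Your reduction is sound as far as it goes: the non-vanishing of $h_F(e_n)$ from $\rho(n)>1$, the inheritance of the relation $2\,h_F(e_{2^k-2})=0$ from Theorem \ref{tBU}, and the observation that every odd-index element of $R_B/J_2$ is torsion because $(R_B/J_2)\otimes\mathbb{Q}\cong R_2\otimes\mathbb{Q}$ lives only in indices that are sums of $2$'s and $4$'s --- all of this is correct, and it is essentially what the paper itself relies on (the paper gives no written proof: the corollary is presented as following from the displayed computation of $\rho$ over $R_B/J_2$ together with Theorem \ref{tBU}, the underlying torsion analysis being imported from \cite{BU} and \cite{BBU}).

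The genuine gap is the one you name yourself: for $n=p^r$ with $p$ odd you only reach ``$p\,h_F(e_{p^r})\in I^2$'' together with ``$h_F(e_{p^r})$ is torsion'', and this does not yield order exactly $p$. Your proposed induction does not close it, because the decomposable monomials of odd index $p^r$ contain odd-index generators $e_{q^s}$ whose annihilating prime $q$ need not equal $p$. Concretely, for $n=7$ the only decomposable monomial of index $7$ surviving in $R_B/J_2$ is $e_2e_5$ (every other partition of $7$ into generator indices uses $1$ or $3$), so associativity gives $7\,h_F(e_7)=c\,h_F(e_2)h_F(e_5)$ for some integer $c$; since $h_F(e_2)h_F(e_5)$ is killed by $5$, your argument only bounds the order of $h_F(e_7)$ by $35$, and ruling out $35$ requires showing $c\equiv 0\pmod 5$ or $h_F(e_2)h_F(e_5)=0$ --- exactly the coefficient control you defer. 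So the proposal establishes the $n=2^k-2$ half and the base case $n=5$, but leaves the main assertion (torsion exactly $p$ for all odd prime powers $p^r\geqslant 7$) unproved; completing it requires either explicit computation in the associativity ideal modulo $J_2$ or citation of the corresponding structure results of \cite{BU}, \cite{BBU}, neither of which you supply.
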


\begin{lem}[Lemma 9.7 in \cite{BBU}] \label{T2b}
The elliptic function of level $2$ is~the~exponential of the specialization of the Tate formal group \eqref{FG} determined by the relations
\begin{align*}
\mu_1 &= 0, & \mu_3 &= 0, & \mu_6 &=0. 
\end{align*}
Therefore it's ring of coefficients is a subring in $\mathbb{Z}[\mu_2, \mu_4]$. It is a formal group \eqref{f2} where the relation holds
\[
 B(u)^2 = (1 - \mu_2 u^2)^2 - 4 \mu_4 u^4.
\]
\end{lem}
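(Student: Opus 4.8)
The plan is to specialize the Tate addition law \eqref{FG} at $\mu_1 = \mu_3 = \mu_6 = 0$ and to show by direct computation that the resulting series is of the Buchstaber form \eqref{f2}, that is $\mathcal{F}_T(u,v) = (u^2 - v^2)/(u B(v) - v B(u))$ with $A(u) \equiv 1$ and
\[ B(u) = 1 - \mu_2 u^2 - 2 \mu_4 u\, s(u). \]
Once this is established, the claim about the exponential follows at once: being of the form \eqref{f2}, the specialized group is a specialization of the universal such group, whose exponential is the elliptic function of level $2$ by Theorem \ref{T2}. Since the coefficients of $s(u)$ determined by \eqref{Ats}, and hence those of $B(u)$ and of $\mathcal{F}_T(u,v)$, lie in $\mathbb{Z}[\mu_2, \mu_4]$, the ring of coefficients is a subring of $\mathbb{Z}[\mu_2, \mu_4]$.

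First I would record two reductions. By Corollary \ref{cor5.2}, at $\mu_1 = \mu_3 = \mu_6 = 0$ one has $\partial_v \mathcal{F}_T(u,v)|_{v=0} = 1 - \mu_2 u^2 - 2\mu_4 u\, s(u)$; for a group of the form \eqref{f2} this derivative equals $B(u)$, which identifies $B(u)$ and shows $B(0) = 1$, $B'(0) = 0$. Second, with $m = (s(u) - s(v))/(u - v)$ the relation $s(u) - s(v) = (u - v) m$ gives the factorization
\[ u B(v) - v B(u) = (u - v)\bigl(1 + \mu_2 u v + 2 \mu_4 u v\, m\bigr), \]
so that the target \eqref{f2} equals $(u + v)/(1 + \mu_2 u v + 2 \mu_4 u v\, m)$. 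On the other side, putting $\mu_1 = \mu_3 = \mu_6 = 0$ in \eqref{FG}, using $n = m + u v(1 + \mu_2 m + \mu_4 m^2)$ to factor $1 + \mu_2 n + \mu_4 n^2 = (1 + \mu_2 m + \mu_4 m^2)\bigl(1 + \mu_2 u v + 2\mu_4 u v\, m + \mu_4 u^2 v^2 (1 + \mu_2 m + \mu_4 m^2)\bigr)$, and cancelling the common factor $1 + \mu_2 m + \mu_4 m^2$, I would bring the addition law to
\[ \mathcal{F}_T(u,v) = \frac{u + v - \mu_4 u v\, k}{1 + \mu_2 u v + 2\mu_4 u v\, m + \mu_4 u^2 v^2\,(1 + \mu_2 m + \mu_4 m^2)}, \]
with $k = (u s(v) - v s(u))/(u - v)$ as in \eqref{FG}.

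The crux is then a single identity relating $k$, $m$ and the symmetric data of $u, v$. Substituting the chord $s = m\, u' + k$ through $(u, s(u))$ and $(v, s(v))$ into \eqref{Ats} specialized to $\mu_1 = \mu_3 = \mu_6 = 0$, namely $s = u'^3 + \mu_2 u'^2 s + \mu_4 u' s^2$, yields a cubic in $u'$ whose three roots are $u$, $v$ and a third power series $w$. Comparing the product and the sum of the roots (Vieta's formulas) and eliminating $w$ gives
\[ k\,\bigl(1 + \mu_2 u v + 2 \mu_4 u v\, m\bigr) = -(u + v)\, u v\,\bigl(1 + \mu_2 m + \mu_4 m^2\bigr). \]
Feeding this into the numerator $u + v - \mu_4 u v\, k$ of the previous display collapses $\mathcal{F}_T(u,v)$ to $(u + v)/(1 + \mu_2 u v + 2 \mu_4 u v\, m)$, hence to $(u^2 - v^2)/(u B(v) - v B(u))$. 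This is the step I expect to be the main obstacle, since it is where the apparently rigid rational expression \eqref{FG} must be shown to factor through the simple Buchstaber shape; organizing the computation around Vieta's formulas for the chord is what keeps it clean.

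Finally, the stated quadratic relation for $B$ is immediate once $B(u) = 1 - \mu_2 u^2 - 2\mu_4 u\, s(u)$ is known: squaring gives
\[ B(u)^2 = (1 - \mu_2 u^2)^2 - 4\mu_4 u\, s(u)\,(1 - \mu_2 u^2) + 4 \mu_4^2 u^2 s(u)^2, \]
and the defining relation $s(u) = u^3 + \mu_2 u^2 s(u) + \mu_4 u\, s(u)^2$ turns the last two terms into $-4\mu_4 u^4$, yielding $B(u)^2 = (1 - \mu_2 u^2)^2 - 4\mu_4 u^4$, as claimed.
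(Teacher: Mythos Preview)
Your argument is correct. The reduction of the specialized addition law to $(u+v-\mu_4 uvk)/(Q+\mu_4 u^2v^2P)$ with $P=1+\mu_2 m+\mu_4 m^2$ and $Q=1+\mu_2 uv+2\mu_4 uvm$ is accurate, the Vieta step is legitimate (the cubic in $u'$ has unit leading coefficient $P$ in $\mathbb{Z}[\mu_2,\mu_4][[u,v]]$, and $u,v$ are roots by construction, so the third root $w=k/(Puv)$ and the sum-of-roots relation yield exactly $kQ=-(u+v)uvP$), and the final squaring check using $s(u)(1-\mu_2 u^2)=u^3+\mu_4 u\,s(u)^2$ is clean.

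The paper does not supply its own proof here; it merely quotes the result from \cite{BBU}. The natural alternative route, via Theorem~\ref{thexp}, observes that at $\mu_1=\mu_3=\mu_6=0$ the exponential \eqref{exp} becomes $-2(\wp(z)-\mu_2/3)/\wp'(z)$, an odd function, which forces $A(u)\equiv 1$ in the Buchstaber form and hence level~$2$; the identification of $B(u)$ then follows from Corollary~\ref{cor5.2}. Your approach instead stays entirely on the formal-group side and avoids invoking the analytic exponential, which makes the integrality over $\mathbb{Z}[\mu_2,\mu_4]$ completely transparent. Both routes are short; yours is arguably more self-contained within the paper.
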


\vfill
\eject

\section{Rings of coefficients for universal formal groups that correspond to~elliptic~genus~of~level~$3$} \label{e3}

\begin{thm}[Theorem 1 in {\cite{BB3}}] \label{T31}
The elliptic function of level $3$ is the exponential of the universal formal group of the form 
\begin{equation} \label{f3}
 F(u,v)=\frac{u^2 A(v) -v^2 A(u)}{u A(v)^2 - v A(u)^2}, \qquad A(0) = 1, \quad A''(0) = 0. 
\end{equation}
\end{thm}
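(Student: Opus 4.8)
The plan is to recognize the form \eqref{f3} as the $N=3$ specialization of the Buchstaber formal group \eqref{FB} and to import the known description of that specialization. First I would note that, since the right-hand side of \eqref{FB} is independent of $B_1$, a formal group of the form \eqref{f3} coincides with the Buchstaber formal group whose $B(u) = A(u)^2 - 2A'(0)\,u$: substituting this into the denominator gives
\[
u\bigl(A(v)^2 - 2A'(0)\,v\bigr) - v\bigl(A(u)^2 - 2A'(0)\,u\bigr) = u A(v)^2 - v A(u)^2,
\]
because the linear cross terms $\pm 2A'(0)\,uv$ cancel. Thus \eqref{f3} is exactly the relation $B(u) + 2A'(0)\,u = A(u)^2$ listed as the $N=3$ case before Problem \ref{prob1}, the normalization $A''(0)=0$ being the usual choice $A_2=0$ (on which $F$ does not depend). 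In the notation of \eqref{GenEq} this is the case $n=1$, $m=0$, where the single free parameter $b_1$ affects only $B_1$ and is therefore irrelevant to $F$; so the $N=3$ specialization is rigid and coincides with \eqref{f3}.

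Next I would identify the exponential. By Buchstaber's theorem (Section \ref{s5}) the exponential of the universal Buchstaber formal group is the Krichever function \eqref{fKr}; restricting to the subfamily \eqref{f3} and applying the solution of Problem \ref{prob1} for $N=3$ from \cite{B19}, part (1) shows that the exponential can only be an elliptic function of level at most $3$, while part (2) shows that the universal formal group of the elliptic genus of level $3$ lies in this family. Together with the rigidity noted above, this forces the exponential of \eqref{f3} to be the elliptic function of level exactly $3$, with Krichever data $\rho = \omega/3$ and $\alpha = \zeta(\omega/3) - \tfrac{2}{3}\zeta(\omega/2)$ as in Corollary \ref{CorKri}.

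For the universality assertion I would build the universal formal group of the form \eqref{f3} exactly as $R_2$ and $R_B$ are built in the paper: the free coefficients are those of $A(u)$, namely $A_1, A_3, A_4, \ldots$, so the universal object is \eqref{f3} over $R_3 = \mathbb{Z}[A_1, A_3, A_4, \ldots]/I_{ass}$ with $I_{ass}$ the associativity ideal of \eqref{f3}, and it is generating by the argument of Lemma \ref{lr}. The universal property then identifies its exponential with the universal level-$3$ elliptic function. The step I expect to be the main obstacle is the second one: Problem \ref{prob1}(1) only bounds the level from above, so ruling out lower levels and degenerate specializations genuinely requires part (2) together with a non-degeneracy check --- for instance, verifying through the Buchstaber derivative formula $\left.\partial_v F\right|_{v=0} = B(u) + A'(0)\,u$ and Corollary \ref{CorKri} that the torsion point $\rho$ has exact order $3$ in $\mathbb{C}/l$, so that the minimality condition in Definition \ref{d12} holds.
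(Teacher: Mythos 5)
First, a point of comparison: the paper does not prove Theorem~\ref{T31} at all --- it is imported verbatim as Theorem~1 of \cite{BB3}, where it is established by direct analysis of the level-$3$ elliptic function and its addition theorem. So there is no in-paper argument to match yours against; what can be checked is whether your reconstruction is sound. Your first step is correct and agrees with what the paper does do (the Lemma following the theorem): since $F$ in \eqref{FB} is independent of $B_1$, the form \eqref{f3} is exactly the specialization $B(u) + 2A_1 u = A(u)^2$, i.e.\ \eqref{F3!}, and for $N=3$ the relation \eqref{GenEq} has $n=1$, $m=0$ and forces $b_1 = 2A_1$, so the specialization is rigid.

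The genuine gap is in the second step, and it is larger than the one you flag. Parts (1) and (2) of Problem~\ref{prob1}, even granted for $N=3$, give you only: (a) every complex point of the family \eqref{f3} has exponential of level at most $3$ (or a degeneration), and (b) the level-$3$ family occurs among these points. To conclude that the exponential of the \emph{universal} formal group of the form \eqref{f3} \emph{is} the level-$3$ function, you must additionally show that the classifying homomorphism from $\R_3 \otimes \mathbb{Q}$ to the ring generated by the coefficients of the level-$3$ family is injective --- equivalently, that $\R_3 \otimes \mathbb{Q}$ is no larger than the two-parameter ring $\mathbb{Q}[A_1, A_3]$ swept out by the level-$3$ functions, so that the level-exactly-$3$ locus is dense in $\operatorname{Spec}(\R_3 \otimes \mathbb{Q})$. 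Your proposed repair --- checking via $\left.\partial_v F\right|_{v=0} = B(u) + A_1 u$ and Corollary~\ref{CorKri} that $\rho$ has exact order $3$ --- only certifies non-degeneracy of a single member of the family; it does not rule out that $I_{ass}$ fails to kill, say, $A_4, A_5, \ldots$ modulo $A_1, A_3$, in which case the universal exponential would depend on more parameters than any elliptic function of level $3$ can. This identification of $\R_3 \otimes \mathbb{Q}$ is precisely the computational content of \cite{BB3} (and of the ring computation $R_3$ quoted from \cite{BBU} in this section). There is also a circularity risk you should address: the ``solution of Problem~\ref{prob1} for $N=3$'' in \cite{B19} postdates and draws on \cite{BB3}, so invoking it to prove Theorem~\ref{T31} may assume what is to be shown.
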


\begin{lem}
 Set $R_3 = \mathbb{Z}[A_1, A_3, A_4, \ldots]/I_{ass}$, where $I_{ass}$ is the associativity ideal generated by the relation \eqref{ass} for the formal group \eqref{f3}. Then the formal group \eqref{f3} is~generating over the ring $R_3$.
\end{lem}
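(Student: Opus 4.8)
The plan is to follow the pattern of the level~$2$ lemma: I would realize the formal group \eqref{f3} as a specialization of the Buchstaber formal group \eqref{FB} and then read off the generating property from Lemma~\ref{lr}.

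To pin down the specialization, I would compare the denominator $u A(v)^2 - v A(u)^2$ of \eqref{f3} with the denominator $u B(v) - v B(u)$ of \eqref{FB} and set $B(u) = A(u)^2 - 2 A_1 u$, where $A_1 = A'(0)$; this is precisely the relation displayed for $N = 3$ before Problem~\ref{prob1}. With this choice one checks directly that $u B(v) - v B(u) = u A(v)^2 - v A(u)^2$, the two terms $\pm 2 A_1 u v$ cancelling, so that \eqref{f3} is literally an instance of \eqref{FB}. I would also confirm the normalizations required of a Buchstaber group: $B(0) = A(0)^2 = 1$ and $B'(0) = 2 A_1 - 2 A_1 = 0$, so $B_1 = 0$, in agreement with the convention $A_2 = B_1 = 0$ (here $A_2 = 0$ is exactly the hypothesis $A''(0) = 0$).

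It then remains to transfer the conclusion of Lemma~\ref{lr} across this specialization. Writing each coefficient of $A(u)^2 - 2 A_1 u$ as an explicit polynomial $b_k = b_k(A_1, A_3, A_4, \ldots)$, the assignment $A_i \mapsto A_i$, $B_k \mapsto b_k$ defines a surjective ring homomorphism $\mathbb{Z}[A_1, A_3, \ldots, B_2, B_3, \ldots] \to \mathbb{Z}[A_1, A_3, \ldots]$. Since \eqref{f3} is \eqref{FB} after this substitution, the homomorphism carries the generators of the associativity ideal $I_{ass}$ of \eqref{FB} onto the generators of the associativity ideal of \eqref{f3}, and hence descends to a surjection $\varphi \colon R_B \to R_3$ sending each coefficient $\alpha_{i,j}$ of \eqref{FB} to the corresponding coefficient of \eqref{f3}. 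By Lemma~\ref{lr} the $\alpha_{i,j}$ multiplicatively generate $R_B$, so their images multiplicatively generate $R_3$; as these images are the coefficients of \eqref{f3}, that formal group is generating over $R_3$. The one step needing care is the well-definedness of $\varphi$, i.e.\ that the substitution sends the associativity relations of \eqref{FB} to those of \eqref{f3}; this is immediate because \eqref{f3} is a genuine instance of \eqref{FB}, so I expect no real obstacle. In particular, unlike the level~$2$ case where the specialization $A(u) = 1$ removes the $A_i$ entirely, here the $A_i$ survive, but their recoverability from the coefficients of \eqref{f3} is automatic, being the image under $\varphi$ of the corresponding fact in $R_B$.
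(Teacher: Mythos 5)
Your proposal is correct and follows essentially the same route as the paper: the paper's proof simply observes that \eqref{f3} is the specialization of the Buchstaber formal group \eqref{FB} given by the relation $B(u) + 2A_1 u = A(u)^2$ (equation \eqref{F3!}) and then invokes Lemma~\ref{lr}. You merely make explicit the verification that this substitution respects the normalizations and the transfer of the generating property along the induced surjection $R_B \to R_3$, details the paper leaves implicit.
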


\begin{proof}
This formal group is a specialization of the Buchstaber formal group \eqref{FB}, determined by the relation 
\begin{equation} \label{F3!}
 B(u) + 2 A_1 u = A(u)^2.
\end{equation}
Thus the Lemma follows from Lemma \ref{lr}.
\end{proof}

\begin{thm}[Theorems 12.2 and 12.4 in \cite{BBU}]
The ring $R_3$ is multiplicatively generated by the elements $A_n$, where $n = 3^k$, $k \geqslant 1$.
For the function $\rho(n) = \rho_3(n)$, we have 
\[
 \rho_3(n) = \left\{ \begin{matrix}
                     \infty, & \text{ for } n = 1,3,\\
                     3, & \text{ for } n = 3^k, k\geqslant 2,\\
                     1 & \text{ in the other cases.} 
                    \end{matrix}
 \right.
\]
The ring $R_3$ is torsion-free.
\end{thm}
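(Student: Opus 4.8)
The plan is to follow the strategy of the level $2$ case, where the analogous statement splits into Theorem \ref{t63} (generators and the function $\rho_2$) and Lemma \ref{T2b} (torsion-freeness via the Tate model). Accordingly I would prove the three assertions in two independent steps: a computation of the indecomposable quotient $R_3/I^2$, which yields both the multiplicative generators and the function $\rho_3$, and a realization of $R_3$ inside a polynomial ring, which yields torsion-freeness.

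For the first step I would use the specialization \eqref{F3!}. Since $B(u) + 2 A_1 u = A(u)^2$ writes every $B_k$ as a polynomial in the $A_j$, the ring $R_3$ is the quotient of $R_B$ by the ideal $J_3$ generated by these relations, and the resulting epimorphism $\phi\colon R_B \to R_3$ satisfies $h_F = \phi \circ h_B$, where $h_F\colon \R_U \to R_3$ is the classifying homomorphism of \eqref{f3}. Passing to indecomposables, $\phi$ induces a surjection $R_B/I^2 \to R_3/I^2$, so each graded piece $(R_3/I^2)_n$ is the quotient of $(R_B/I^2)_n$ by the image of $J_3$. By Theorem \ref{tBU} the latter vanishes unless $n \in \{1,2,3,4\}$, $n = p^r$, or $n = 2^k-2$, and is otherwise cyclic of order $\rho_B(n)$ (infinite cyclic when $\rho_B(n) = \infty$), generated by $g_n = h_B(e_n)$; in particular $\rho_3(n) = 1$ already at every weight outside this list. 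Reducing \eqref{F3!} modulo the ideal $I^2$ of decomposable elements gives $B_2 \equiv 0$ and $B_n \equiv 2 A_n$ for $n \geqslant 3$. Writing $A_n \equiv a_n g_n$ and $B_n \equiv b_n g_n$, the relation at weight $n \geqslant 3$ reads $(b_n - 2 a_n) g_n \equiv 0$, so that
\[
\rho_3(n) = \gcd\bigl(\rho_B(n),\, b_n - 2 a_n\bigr), \qquad n \geqslant 3,
\]
with the convention $\gcd(\infty, m) = |m|$ for $m \neq 0$ and $\gcd(\infty, 0) = \infty$; the weights $n = 1, 2$ are immediate ($\rho_3(1) = \infty$, $\rho_3(2) = 1$). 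It then remains to extract the integers $a_n, b_n$ modulo $\rho_B(n)$ from the associativity relations of the Buchstaber group and to verify that this $\gcd$ equals $\infty$ for $n = 3$, equals $3$ for $n = 3^k$ with $k \geqslant 2$, and equals $1$ otherwise; concretely this means checking $b_3 = 2 a_3$, that $3 \mid (b_{3^k} - 2 a_{3^k})$ for $k \geqslant 2$, and that $b_n - 2 a_n$ is prime to $\rho_B(n)$ at every remaining weight (in particular $b_4 - 2 a_4 = \pm 1$).

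For the second step I would realize \eqref{f3} as a specialization of the Tate formal group, exactly as Lemma \ref{T2b} does for level $2$. The level $2$ case corresponds to the $2$-torsion normal form $\mu_1 = \mu_3 = \mu_6 = 0$; the level $3$ elliptic function is the exponential attached to an elliptic curve with a marked point of order $3$, whose normal form is \eqref{A1} with $\mu_2 = \mu_4 = \mu_6 = 0$, that is $Y^2 + \mu_1 X Y + \mu_3 Y = X^3$. Matching the exponential of this Tate formal group, computed from Theorem \ref{thexp}, with the level $3$ elliptic function identifies the two formal group laws and expresses $A(u)$ through $\mu_1$, $\mu_3$ and the series $s(u)$. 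By Theorem \ref{T31} the form \eqref{f3} is universal for level $3$, so this identification identifies their rings of coefficients; since the coefficient ring of the Tate group lies in the torsion-free ring $\mathbb{Z}[\mu_1, \mu_3]$, the ring $R_3$ embeds there and is therefore torsion-free.

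The main obstacle is the arithmetic inside the first step: pinning down the classes $a_n, b_n$ of the Buchstaber coefficients in $R_B/I^2$ accurately enough to evaluate $\gcd(\rho_B(n), b_n - 2 a_n)$ at every weight, and in particular to see why the prime $3$, and no other, survives. I expect this to reduce to Kummer's theorem applied to the divisibilities $d(n+1)$ that govern the universal generators $e_n$ --- the same mechanism that produces $\rho_B(p^r) = p$ in Theorem \ref{tBU} --- so that $b_n - 2 a_n$ carries a factor of $3$ precisely when $n$ is a power of $3$. Obtaining this congruence uniformly in $n$, rather than checking it weight by weight, is the part I expect to demand the most care.
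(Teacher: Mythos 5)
This theorem is imported verbatim from \cite{BBU} (Theorems 12.2 and 12.4); the paper under review gives no proof beyond the citation, so your attempt has to be judged on its own merits rather than against an internal argument. Your first step is set up correctly: $R_3 \cong R_B/J_3$, the indecomposable quotient $(R_3/I^2)_n$ is the quotient of the cyclic group $(R_B/I^2)_n \cong \mathbb{Z}/\rho_B(n)$ by the class of the linear part of \eqref{F3!}, and that linear part is $B_2$ in weight $2$ and $B_n - 2A_n$ in weight $n \geqslant 3$, giving $\rho_3(n) = \gcd\bigl(\rho_B(n), b_n - 2a_n\bigr)$. But this only disposes of the weights where $\rho_B(n) = 1$. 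The entire content of the theorem --- why $3$ divides $b_{3^k} - 2a_{3^k}$ for all $k \geqslant 2$, why $b_3 = 2a_3$ exactly, and why $b_n - 2a_n$ is a unit modulo $\rho_B(n)$ at $n = 2, 4$, at $n = p^r$ for $p \neq 3$, and at $n = 2^k - 2$ --- sits in the deferred computation of the classes $a_n, b_n$, and the appeal to Kummer's theorem is a hope rather than an argument (Kummer's theorem explains $\rho_B$, not the relative position of $A_n$ and $B_n$ inside $(R_B/I^2)_n$). As written, the first step proves only the trivial cases.

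The torsion-freeness step has a genuine logical gap independent of the computations. Realizing the level-$3$ function as the exponential of a specialization of the Tate formal group over a torsion-free ring produces, via universality of \eqref{f3} (Theorem \ref{T31}), a classifying homomorphism from $R_3$ \emph{onto} the coefficient ring of that specialization; a surjection onto a torsion-free ring detects no torsion in the source. The sentence ``this identification identifies their rings of coefficients'' is precisely the missing claim: you must prove the classifying map is injective, i.e.\ that the Tate specialization is itself universal for the form \eqref{f3}, and nothing in the proposal addresses this. (This is also why the paper's Lemma \ref{T2b} is stated as producing ``a subring of $\mathbb{Z}[\mu_2,\mu_4]$'' for the specialization, not as a proof that $R_2$ is torsion-free.) Separately, your proposed normal form $\mu_2 = \mu_4 = \mu_6 = 0$ is not the specialization recorded in Lemma \ref{T3b}, which is $\mu_2 = -\mu_1^2$, $\mu_4 = \mu_1\mu_3$, $3\mu_6 = -\mu_3^2$ with $A(u) = 1 - \mu_1 u - \mu_3 s(u)$; the shape \eqref{f3} with its normalization $A''(0) = 0$ is tied to a particular choice of the Tate coordinate, so you would at minimum have to verify that your normal form actually yields a formal group of that shape, which is doubtful given the form the relations take in the paper.
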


\begin{rem}
 As the ring $R_3$ has only one element of each weight, one can replace the generating elements $h_3(e_n)$ by $A_n$ in the Theorem. See Theorem 12.2 in \cite{BBU}.
\end{rem}

\begin{thm}[Theorem 7.8 in \cite{B19}] \label{T33}
The elliptic function of level $3$ is the exponential of the universal formal group,
which is a specialization of Buchstaber formal group \eqref{FB},
determined by the relations
\begin{align} \label{F3z}
B_2 &= A_1^2, & B_4 &= 0.
\end{align} 
\end{thm}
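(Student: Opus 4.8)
The plan is to identify the specialization cut out by \eqref{F3z} with the one cut out by the functional relation \eqref{F3!}, whose exponential is the elliptic function of level $3$ by Theorem \ref{T31}. Write $J_3 \subset R_B$ for the ideal of $R_B$ defining the specialization \eqref{f3} (equivalently \eqref{F3!}), so that $R_B/J_3 \cong R_3$. I would prove the equality of ideals $J_3 = \langle B_2 - A_1^2,\, B_4 \rangle$. Granting this, Lemma \ref{lr} shows the specialized group is generating over $R_B/\langle B_2 - A_1^2, B_4\rangle = R_3$, Theorem \ref{T31} identifies its exponential as the level-$3$ function, and universality is inherited from the universality of the Buchstaber group over $R_B$.

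The inclusion $\langle B_2 - A_1^2, B_4\rangle \subseteq J_3$ is the easy half: it asks that both relations already hold in \eqref{F3!}. Substituting $B(u) = A(u)^2 - 2A_1 u$ with $A_2 = 0$ gives $B_2 = A_1^2$ immediately, while the $u^4$-coefficient yields $B_4 = 2A_4 + 2A_1A_3$. A direct expansion of the associativity law \eqref{ass} for \eqref{f3} in the lowest weights fixes $A_4 = -A_1 A_3$ (consistent with the fact that in $R_3$ the multiplicative generators are $A_1$ and the $A_{3^k}$, so $A_4$ is decomposable of weight $-8$), whence $B_4 = 0$. This is a finite computation.

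The substantive half is the reverse inclusion $J_3 \subseteq \langle B_2 - A_1^2, B_4\rangle$: imposing only $B_2 = A_1^2$ and $B_4 = 0$ on $R_B$ must already force the full identity $B(u) + 2A_1 u = A(u)^2$. Here I would argue by induction on weight using Theorem \ref{tBU}. The classes $h_B(e_n)$ with $n \in \{1,2,3,4\}\cup\{p^r\}\cup\{2^k-2\}$ generate $R_B$, and I must show that after the two relations every generator other than $A_1, A_3, A_9, A_{27}, \dots$ becomes decomposable, leaving precisely the generating set of $R_3$ (the $n$ with $\rho_3(n)>1$). The relation $B_2 = A_1^2$ removes $e_2$ in weight $-4$, and $B_4 = 0$ together with $A_4 = -A_1A_3$ removes $e_4$ in weight $-8$; for the higher generators $e_5, e_6, e_7,\dots$ one substitutes these two relations into the graded components of the associativity ideal and checks, weight by weight, that the resulting $B_k$ coincide with the $u^k$-coefficients of $A(u)^2 - 2A_1 u$. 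The main obstacle is exactly this propagation, since a priori two low-weight relations need not control the high-weight associativity relations; I expect to organize the induction around the prime-power pattern of $\rho_B$ coming from Kummer's theorem in Theorem \ref{tBU}, in parallel with the level-$2$ argument that produces \eqref{F2z}. Matching the graded generator sets makes the surjection $R_B/\langle B_2 - A_1^2, B_4\rangle \twoheadrightarrow R_3$ an isomorphism in each weight, giving $J_3 = \langle B_2 - A_1^2, B_4\rangle$ and hence the theorem.
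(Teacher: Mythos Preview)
Your central claim---that the ideal generated by \eqref{F3z} in $R_B$ coincides with the ideal $J_3$ coming from \eqref{F3!}, so that $R_B/\langle B_2-A_1^2,\,B_4\rangle\cong R_3$---is false, and the paper says so explicitly. In the corollaries immediately following Theorem~\ref{T33} the ring $R_B/\langle B_2-A_1^2,\,B_4\rangle$ is shown to contain $p$-torsion for every prime $p\neq 3$ (coming from $h_F(e_{p^r})$) and extra $2$-torsion from $h_F(e_{2^k-2})$, whereas $R_3$ is torsion-free. So the surjection $R_B/\langle B_2-A_1^2,\,B_4\rangle\twoheadrightarrow R_3$ that you correctly set up has a nontrivial kernel; your proposed ``matching generators'' argument cannot close it, and the induction you sketch would fail at the first odd prime power. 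The level-$2$ situation is exactly parallel: $R_B/J_2$ (with $J_2$ from \eqref{F2z}) is not $R_2$ either.

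What Theorem~\ref{T33} actually asserts is only a statement about the \emph{exponential}, which lives in the tensor with $\mathbb{Q}$; two formal groups over different integral rings can share the same exponential. The paper does not prove Theorem~\ref{T33} here---it is quoted from \cite{B19}---but the argument there does not proceed by comparing integral ideals. Rather, one works over $\mathbb{Q}$: the exponential of any Buchstaber specialization is a Krichever function~\eqref{fKr} depending on four parameters (equivalently the $q_i$ in \eqref{feq}), and the two relations \eqref{F3z} on the low-order coefficients $A_1,B_2,A_3,B_4$ translate into exactly the conditions on $(\alpha,\rho,l)$ that force $\rho$ to be a $3$-torsion point, i.e.\ that pick out the level-$3$ function via Corollary~\ref{CorKri}. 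Your easy inclusion is correct and is all that is needed on the algebraic side; the rest is the analytic identification, not an equality of ideals in $R_B$.
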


\begin{cor}
 The ring of coefficients of the formal group from Theorem \ref{T33} is~$R_B / J_3$, where $J_3$ is the ideal generated by the relations \eqref{F3z}. The formal group is generating over this ring.
\end{cor}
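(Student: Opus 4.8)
The plan is to deduce the corollary directly from Lemma~\ref{lr} together with the elementary observation that being a generating formal group is preserved under surjective ring homomorphisms.

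First I would note that imposing the relations \eqref{F3z} on the Buchstaber formal group \eqref{FB} is the same as passing from $R_B$ to the quotient ring $R_B/J_3$ along the canonical projection $\pi\colon R_B \to R_B/J_3$. Writing $a_{i,j}$ for the coefficients of \eqref{FB} over $R_B$---these are genuine polynomials in the $A_k, B_k$, since after cancelling the common factor $u - v$ the denominator of \eqref{FB} becomes a unit in $R_B[[u,v]]$---the formal group of Theorem~\ref{T33} is precisely the specialization with coefficients $\pi(a_{i,j})$, because substituting $B_2 = A_1^2$ and $B_4 = 0$ into each $a_{i,j}(A_\bullet, B_\bullet)$ has exactly the effect of applying $\pi$. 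Both relations in \eqref{F3z} are homogeneous for $\wt A_k = \wt B_k = -2k$, so $J_3$ is a homogeneous ideal and $R_B/J_3$ is graded; associativity is inherited automatically, as \eqref{ass} holds in $R_B$ and hence survives $\pi$.

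Now Lemma~\ref{lr} says the Buchstaber formal group is generating over $R_B$, i.e.\ the $a_{i,j}$ generate $R_B$ as a ring; since $\pi$ is surjective, the images $\pi(a_{i,j})$ generate $\pi(R_B) = R_B/J_3$. By the definition of a generating formal group this shows at once that the specialization is generating over $R_B/J_3$ and that its ring of coefficients is $R_B/J_3$, which are the two assertions of the corollary. I do not expect any genuine obstacle: the argument is entirely formal, the substantive input---that \eqref{F3z} singles out exactly the level~$3$ elliptic function---being already supplied by Theorem~\ref{T33}. The only step meriting care is the identification of ``imposing the relations'' with the quotient construction, namely that $R_B/J_3$ carries no coefficient relations beyond those in $I_{ass}$ and the generators $B_2 - A_1^2$, $B_4$ of $J_3$; this is immediate from the definition of $J_3$, and the proof runs exactly parallel to the level~$2$ corollary following Theorem~\ref{T23}.
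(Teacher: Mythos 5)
Your argument is correct and is essentially the paper's own (the corollary is stated without proof there, being an immediate consequence of Lemma~\ref{lr} and the fact that a surjection $R_B \to R_B/J_3$ carries the generating set of coefficients onto a generating set of the quotient). The points you flag for care --- that the $a_{i,j}$ are genuine polynomials in the $A_k, B_k$ and that imposing \eqref{F3z} is the same as passing to the quotient by $J_3$ --- are handled exactly as you describe.
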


Over the ring $R_B / J_3$ we have $\rho(n) = \rho_B(n)$ for $n \ne 2,4$ and $\rho(2) = \rho(4) = 1$.

\begin{cor}
For the classifying homomorphism 
$h_F\colon \R_U \to R_B / J_3$ the image of~the elements $e_n$ for  $n = p^r$ (where $r \geqslant 1$ and $p$ is prime, $p \ne 3$, $n \ne 2,4$) give elemets of~torsion~$p$ and the elements $e_n$ for $n = 2^k-2$ (where $k \geqslant 3$) 
give elemets of torsion $2$.
\end{cor}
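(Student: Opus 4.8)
The plan is to deduce the statement from the torsion description of $R_B$ in Theorem~\ref{tBU} together with the comparison $\rho(n)=\rho_B(n)$ for $n\neq 2,4$ recorded just above for the ring $R_B/J_3$. Writing $q\colon R_B\to R_B/J_3$ for the quotient by the ideal $J_3$ generated by the relations \eqref{F3z}, the classifying homomorphism factors as $h_F=q\circ h_B$, so that $h_F(e_n)=q(h_B(e_n))$. Since $\rho(n)=\rho_B(n)>1$ for every $n$ occurring in the statement, each $h_F(e_n)$ is nonzero modulo decomposables; this already supplies the lower bound on the additive order and reduces the corollary to two assertions: that $2\,h_F(e_{2^k-2})=0$ for $k\geq 3$, and that $p\,h_F(e_{p^r})=0$ for every prime $p\neq 3$ with $p^r\neq 2,4$.

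The case $n=2^k-2$ is immediate. By Theorem~\ref{tBU} the elements $h_B(e_{2^k-2})$ generate in $R_B$ the ideal $I_B$ of elements of order two, so $2\,h_B(e_{2^k-2})=0$ in $R_B$; applying $q$ yields $2\,h_F(e_{2^k-2})=0$. Combined with $h_F(e_{2^k-2})\neq 0$, which follows from $\rho(2^k-2)=2$, this shows that $h_F(e_{2^k-2})$ has additive order exactly two.

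The case $n=p^r$ with $p\neq 3$ is the substantial one. From $\rho_B(p^r)=p$ one only knows that $p\,h_B(e_{p^r})$ is decomposable in $R_B$, and for odd $p$ this element is nonzero in $R_B$, since $R_B$ has torsion only of order two; thus the required relation $p\,h_F(e_{p^r})=0$ must be produced by the two relations \eqref{F3z}. To establish it I would pass to the torsion-free quotient of $R_B/J_3$. By Theorem~\ref{T33} this ring is a generating ring for the elliptic formal group of level~$3$, so rationally $(R_B/J_3)\otimes\mathbb{Q}$ coincides with the coefficient ring of that group, whose only additive generators of infinite order are the images $h_F(e_1)$ and $h_F(e_{3^k})$ ($k\geq 1$) of the generators of $R_3$. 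The goal is then to show that, modulo the subgroup generated by these infinite-order generators and by the decomposables, the element $p\,h_F(e_{p^r})$ vanishes. I would carry this out by induction on the weight $-2p^r$, rewriting $p\,h_B(e_{p^r})$ by means of the associativity relation \eqref{ass} and the defining relations $B_2=A_1^2$, $B_4=0$, and checking at each weight that the resulting decomposable combination lies in $J_3$.

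The main obstacle is precisely this last step: proving that imposing only the two relations \eqref{F3z} forces $p\,h_F(e_{p^r})=0$ for every prime $p\neq 3$, i.e.\ that these relations create $p$-torsion in all weights of the form $-2p^r$. This is where the arithmetic underlying $\rho_B$---the divisibilities of the binomial coefficients governed by Kummer's theorem, which make $p\,h_B(e_{p^r})$ decomposable in the first place---has to be matched against the explicit level~$3$ relations, exactly as in the level~$2$ computation yielding the analogous corollary for $R_B/J_2$.
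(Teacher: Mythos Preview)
The paper offers no proof for this corollary; it is placed immediately after the one-line assertion that over $R_B/J_3$ one has $\rho(n)=\rho_B(n)$ for $n\neq 2,4$, and is meant to be read as a direct restatement of that fact together with the values of $\rho_B$ from Theorem~\ref{tBU}. In other words, ``elements of torsion $p$'' here refers to the order of $h_F(e_n)$ in the indecomposable quotient $(R_B/J_3)/I^2$, which is by definition $\rho(n)$. With this reading the corollary is trivial: $\rho(p^r)=\rho_B(p^r)=p$ and $\rho(2^k-2)=\rho_B(2^k-2)=2$ for the indicated $n$, and nothing more is claimed.

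You have interpreted ``torsion $p$'' as additive order $p$ in the ring $R_B/J_3$ itself, and this is what drives your ``substantial case''. Your treatment of $n=2^k-2$ is fine either way, since that torsion already lives in $R_B$. But for odd $p\neq 3$ your stronger reading would require showing that $p\,h_B(e_{p^r})\in J_3$, i.e.\ that a specific decomposable element of weight $-2p^r$ in $R_B$ lies in the ideal generated by $B_2-A_1^2$ and $B_4$. You correctly observe that $\rho_B(p^r)=p$ only gives $p\,h_B(e_{p^r})\in I^2$, not $p\,h_B(e_{p^r})\in J_3$, and the sketch you give (induction on weight, matching Kummer-type divisibilities against the two relations \eqref{F3z}) is not an argument but a description of what would have to be checked. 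In short: the gap you identify is real for the statement you are trying to prove, but that statement is stronger than what the paper asserts. Under the paper's intended meaning there is no gap and the corollary is immediate from the preceding sentence.
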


\begin{lem}[Lemma 9.8 in \cite{BBU}] \label{T3b}
The elliptic function of level $3$ is~the~exponential of the~specialization of the Tate formal group \eqref{FG} determined by the relations 
\begin{align*}
\mu_2 &= - \mu_1^2, & \mu_4 &= \mu_1 \mu_3,  & 3 \mu_6 &= - \mu_3^2.
\end{align*}
Therefore it's ring of coefficients is a subring in $\mathbb{Z}[\mu_1, \mu_3, \mu_6]/\{3 \mu_6 = - \mu_3^2\}$.

It is a formal group \eqref{f3} where the relation holds
\[
 A(u) = 1 - \mu_1 u - \mu_3 s(u).
\]
\end{lem}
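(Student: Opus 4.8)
The plan is to show directly that, after imposing the three relations, the Tate addition law \eqref{FG} coincides with a formal group of Buchstaber form \eqref{FB} whose pair of series is $A(u) = 1 - \mu_1 u - \mu_3 s(u)$ and $B(u) = A(u)^2 - 2 A_1 u$. Since the latter equality is exactly the relation \eqref{F3!}, such a group has the special shape \eqref{f3} (one checks at once that its denominator $u B(v) - v B(u)$ equals $u A(v)^2 - v A(u)^2$), and then Theorem \ref{T31} identifies its exponential with the elliptic function of level $3$. The second assertion of the lemma follows formally: over a $\mathbb{Q}$-algebra a formal group is determined by its exponential, so the specialized Tate group is the universal group of Theorem \ref{T31}, and since the specialization expresses $\mu_2,\mu_4$ through $\mu_1,\mu_3$ and constrains $\mu_6$ by $3\mu_6 = -\mu_3^2$, its ring of coefficients lies in $\mathbb{Z}[\mu_1,\mu_3,\mu_6]/\{3\mu_6 = -\mu_3^2\}$ (note that over $\mathbb{Z}$ one cannot eliminate $\mu_6$, so it is kept as a generator subject to the relation).

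First I would pin down the candidates for $A(u)$ and $B(u)$. Evaluating $\partial_v \mathcal{F}_T(u,v)|_{v=0}$ for a group of Buchstaber form \eqref{FB}, with the normalization $B_1 = 0$, gives $B(u) + A_1 u$, whereas Corollary \ref{cor5.2} computes this same derivative for the Tate group explicitly. Comparing the two expressions, the coefficient of $u$ forces $A_1 = -\mu_1$, and the requirement that the series of Corollary \ref{cor5.2} equal $A(u)^2 - A_1 u$ with $A(u) = 1 - \mu_1 u - \mu_3 s(u)$ reproduces, order by order in $u$ and $s(u)$, precisely the three relations $\mu_2 = -\mu_1^2$, $\mu_4 = \mu_1\mu_3$, $3\mu_6 = -\mu_3^2$ of the statement. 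This already shows $A(0) = 1$ and, since $s(u) = u^3 + \dots$ contributes no $u^2$ term, $A''(0) = 0$, so the form \eqref{f3} is admissible and the only possible $A$ and $B$ are fixed.

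The main step, and the principal obstacle, is to upgrade this matching of the part linear in $v$ to an identity of the full two-variable laws: the Buchstaber law \eqref{FB} built from $A(u) = 1 - \mu_1 u - \mu_3 s(u)$ and $B = A^2 - 2A_1 u$ must equal \eqref{FG} once the relations are substituted. I would carry this out by writing both sides as rational expressions in $u, v, s(u), s(v)$ and reducing modulo the curve equation \eqref{Ats}, which under the relations reads
\[
s = u^3 + \mu_1 u s - \mu_1^2 u^2 s + \mu_3 s^2 + \mu_1\mu_3 u s^2 - \tfrac{1}{3}\mu_3^2 s^3,
\]
and which lets one eliminate the higher powers of $s(u), s(v)$ entering through the auxiliary quantities $m, k, n$ of Theorem \ref{Adel}. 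This is a finite but heavy symbolic computation; the delicate point is to organize it so that the denominator $u B(v) - v B(u) = u A(v)^2 - v A(u)^2$ emerges directly from the product of the three denominator factors of \eqref{FG}, while the three numerator factors collapse to $u^2 A(v) - v^2 A(u)$.

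Finally, I would confirm that the level is exactly $3$ and not a proper divisor — thereby clinching Part~1 independently of Theorem \ref{T31}, should one wish — by passing to the exponential. By Theorem \ref{thexp} the exponential of the specialized group is \eqref{exp} with $g_2, g_3$ from \eqref{g23}; under $\mu_2 = -\mu_1^2$ one has $\tfrac{1}{12}(4\mu_2 + \mu_1^2) = -\tfrac14\mu_1^2$, which already simplifies the numerator and denominator of \eqref{exp}. I would then check that the point at which the denominator of \eqref{exp} vanishes is a $3$-torsion point of the associated lattice, so that $f_T(z)^3$ is elliptic with divisor $3\cdot 0 - 3\cdot\rho$. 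Matching this against the normalization of Lemma \ref{llN} and the description in Corollary \ref{CorKri} shows that $f_T$ is the non-degenerate elliptic function of level $3$ in the sense of Definition \ref{d12}, completing the identification and yielding the closed form $A(u) = 1 - \mu_1 u - \mu_3 s(u)$ for the resulting group \eqref{f3}.
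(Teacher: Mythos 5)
The paper offers no proof of Lemma~\ref{T3b} --- it is imported verbatim as Lemma~9.8 of \cite{BBU} --- so your proposal can only be judged on its own terms, and two things need attention. First, your opening step asserts something that, checked against the formulas actually quoted in this paper, comes out wrong. For a group of the form \eqref{FB} with $B_1=0$ one indeed has $\partial_v F(u,v)|_{v=0} = B(u)+A_1u$, and for the form \eqref{f3} the relation \eqref{F3!} gives $B(u)=A(u)^2-2A_1u$, so your matching amounts to $A(u)^2 = 1 - 2\mu_1 u - \mu_2 u^2 - 2\mu_3 s(u) - 2\mu_4 u\,s(u) - 3\mu_6 s(u)^2$ by Corollary~\ref{cor5.2}. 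But $(1-\mu_1u-\mu_3 s)^2 = 1 - 2\mu_1u - 2\mu_3 s + \mu_1^2u^2 + 2\mu_1\mu_3 us + \mu_3^2 s^2$, and comparing coefficients of $u^2$, $u^3$, $u^4$ (note $s(u)=u^3+\mu_1u^4+\cdots$, so the $u^4$ coefficient isolates the $us$-term) forces $\mu_2=-\mu_1^2$, $3\mu_6=-\mu_3^2$, but $\mu_4=-\mu_1\mu_3$, the \emph{opposite} sign to the statement. You claim the matching ``reproduces precisely'' the three stated relations; it does not, and this discrepancy --- whether it traces to a sign convention between the quoted sources or to the matching itself --- has to be resolved before anything else can proceed. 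Relatedly, matching ``order by order in $u$ and $s(u)$'' is not a legitimate comparison: $u^2$, $u\,s(u)$, $s(u)^2$ are not independent monomials, and one must expand in $u$ alone.

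Second, the actual content of the lemma --- that the full two-variable law \eqref{FG} collapses to the form \eqref{f3} --- is deferred to ``a finite but heavy symbolic computation'' that is never performed; equating $\partial_vF|_{v=0}$ only pins down the coefficients $a_{1,n}$, not the whole group. A proof must either carry out that computation (and the proposed reduction modulo \eqref{Ats} needs care, since the curve equation is not monic in $s$ over $\mathbb{Z}[\mu_1,\ldots,\mu_6]$) or replace it by a structural argument: since the rings involved are torsion-free, a formal group is determined by its exponential, so it suffices to compare the one-variable series of Theorem~\ref{thexp}, after imposing the relations, with the exponential of the group \eqref{f3} built from $A(u)=1-\mu_1u-\mu_3 s(u)$. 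That route would also repair your final step, where the appeal to Theorem~\ref{T31} is too quick: that theorem concerns the \emph{universal} group of the form \eqref{f3}, and a particular specialization of that form could a priori have a degenerate exponential rather than the level-$3$ function, so non-degeneracy of the Tate specialization (for generic $\mu_1,\mu_3$) must be established, as you begin to do in your last paragraph but do not complete.
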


\vfill
\eject

\section{Rings of coefficients for universal formal groups that correspond to~elliptic~genus~of~level~$4$} \label{e4}

\begin{thm}[Theorem 7.8 in {\cite{Ell4}}] \label{T41}
The elliptic function of level~$4$ is the exponential of the universal formal group of the form
\begin{equation} \label{f4}
 F(u,v)=\frac{u^2 A(v) -v^2 A(u)}{u B(v)-v B(u)},  
\end{equation}
where $A(u) = 1 + \sum_k A_k u^k$, $B(u) = 1 + \sum_k B_k u^k$, $B_1 = A_2 = 0$ and the relation holds
\begin{equation} \label{F4!}
\left(2 B(u) + 3 A_1 u\right)^2 = A(u)^2 \left(4 A(u) - \left(3 A_1^2 - 8 B_2\right) u^2\right). 
\end{equation}
\end{thm}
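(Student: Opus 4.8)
The plan is to pass through the Krichever description of the exponential and convert the level-$4$ condition into the algebraic relation \eqref{F4!}. By~\cite{Buc90} the exponential of the universal Buchstaber formal group \eqref{FB} is the Krichever function \eqref{fKr}, and by Corollary~\ref{CorKri} the non-degenerate elliptic function of level~$4$ is exactly this function with $\rho = \omega/4$ and $\alpha = \zeta(\omega/4) - \zeta(\omega/2)/2$. So it suffices to show that imposing these parameters is equivalent to the relation \eqref{F4!} between $A(u)$ and $B(u)$, and that the formal group so obtained is universal among those of the form \eqref{f4}.

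First I would build a dictionary between the series $A, B$ and the exponential $f$. Writing $F(u,v) = f(g(u)+g(v))$ with $g = f^{-1}$ and differentiating in $v$ at $v = 0$ gives $\partial_v F(u,v)|_{v=0} = B(u) + A_1 u$, so that with $u = f(z)$ one has $B(f(z)) = f'(z) - A_1 f(z)$; differentiating once more and using the normalization $A_2 = B_1 = 0$ yields $A(f(z)) = f'(z)^2 - \tfrac12 f(z)\bigl(f''(z) + A_1 f'(z)\bigr) - B_2 f(z)^2$. Substituting $u = f(z)$ into \eqref{F4!}, the left-hand side collapses to $(2 f'(z) + A_1 f(z))^2$, so \eqref{F4!} becomes a single differential identity for $f$, which can be compared with the two sides expressed through $f, f', f''$.

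To finish I would identify this family with the $N=4$ case of Problem~\ref{prob1}. For $N=4$ one has $[(N-1)/2] = [(N-2)/2] = 1$, so \eqref{GenEq} reads $(B(u)+b_1 u)^2 = A(u)^2(A(u)+a_1 u^2)$; comparing the coefficients of $u$ and $u^2$ (again using $A_2 = B_1 = 0$) forces $b_1 = \tfrac32 A_1$ and $a_1 = 2 B_2 - \tfrac34 A_1^2$, and with these values \eqref{GenEq} is precisely \eqref{F4!}. Hence the family cut out by \eqref{F4!} coincides with the $N=4$ specialization \eqref{GenEq}, and the solution of Problem~\ref{prob1} in~\cite{B19} supplies both that every exponential in the family is an elliptic function of level at most~$4$ and that the universal level-$4$ formal group lies in it. Existence of the universal formal group of the form \eqref{f4}--\eqref{F4!} is routine, the form being cut out by polynomial relations exactly as in Lemma~\ref{lr}. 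A self-contained alternative is to insert the level-$4$ Krichever function into the dictionary above and verify the resulting differential identity directly, using the relations among $\sigma, \zeta, \wp$ at the $4$-torsion point $\rho = \omega/4$.

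The main obstacle is upgrading ``level at most~$4$'' to exactly level~$4$ together with universality: one must check that the family \eqref{F4!} carries no more parameters than the moduli of lattices with a marked $4$-torsion point, so that the generic member has level exactly~$4$ and lower-level degenerations sit on a proper subvariety, and that the universal object surjects onto every member. Equivalently this is the content of part~$2$ of the solution of Problem~\ref{prob1}, or of the explicit $4$-torsion verification in the analytic route; the coefficient-matching and the derivative computations are routine by comparison.
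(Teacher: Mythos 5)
The paper gives no proof of Theorem \ref{T41} at all: it is imported verbatim as Theorem 7.8 of \cite{Ell4}, so there is no internal argument to measure yours against. Judged on its own terms, your sketch is correct wherever it is explicit. The dictionary checks out: for the form \eqref{FB} with $A_2=B_1=0$ one indeed has $\partial_v F(u,v)|_{v=0}=B(u)+A_1u$, hence $B(f(z))=f'(z)-A_1f(z)$, and a second differentiation (using $f''(0)=A_1$) gives exactly your identity $A(f)=(f')^2-\tfrac12 f\,(f''+A_1f')-B_2f^2$; the $A_1^2f^2$ terms cancel as they must. The coefficient matching is also right: in \eqref{GenEq} with $N=4$, $n=m=1$, comparing the $u$- and $u^2$-coefficients forces $b_1=\tfrac32A_1$ and $a_1=2B_2-\tfrac34A_1^2$, and clearing denominators turns $(B(u)+b_1u)^2=A(u)^2(A(u)+a_1u^2)$ into \eqref{F4!}. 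So the family cut out by \eqref{F4!} is precisely the $N=4$ instance of Problem \ref{prob1}, and the existence of the universal group of this form is indeed routine via Lemma \ref{lr}.

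The one substantive gap is the one you flag yourself, and it is where the whole weight of the theorem sits: showing that the exponential of the universal member is an elliptic function of level exactly $4$ (not merely $\leqslant 4$, i.e.\ not a degeneration satisfying \eqref{F2!} or \eqref{F3!}), and that every level-$4$ elliptic function arises this way. You delegate this to the solution of Problem \ref{prob1} in \cite{B19}; but \cite{B19} postdates \cite{Ell4}, and its $N=4$ case rests on the very Theorem 7.8 you are trying to prove, so as a self-contained argument your route is circular at exactly the hard step. To close it you would have to carry out your ``analytic alternative'' in earnest: insert the level-$4$ Krichever function \eqref{fKr} with $\rho=\omega/4$ and $\alpha$ as in Corollary \ref{CorKri} into the dictionary, verify \eqref{F4!} directly from the addition theorems for $\sigma,\zeta,\wp$ at the $4$-torsion point, and then show that the free parameters of the family \eqref{F4!} (over $\mathbb{Q}$ these are $A_1$ and $B_2$) exactly match the moduli of pairs (lattice, marked $4$-torsion point), so that the generic member has level exactly $4$ and the correspondence is onto. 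That computation is the actual content of \cite{Ell4} and is not ``routine by comparison''; everything you have written is the easy reduction surrounding it.
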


\begin{lem}
 Set $R_4 = \mathbb{Z}[A_1, A_3, A_4, \ldots, B_2, B_3, B_4, \ldots]/\{I_{ass}, I_4\}$, where $I_{ass}$ is the associativity ideal generated by the relation \eqref{ass} for the formal group \eqref{f4} and $I_{4}$ is the ideal generated by the relation \eqref{F4!}. Then the formal group \eqref{f4} is~generating over $R_4$.
\end{lem}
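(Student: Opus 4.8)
The plan is to recognize that this Lemma is structurally identical to the lemmas for levels 2 and 3 (see the lemmas following Theorem~\ref{T2} and Theorem~\ref{T31}), and to prove it by the same two-step reduction to Lemma~\ref{lr}. The key observation is that the formal group \eqref{f4} together with the constraint \eqref{F4!} is a \emph{specialization} of the Buchstaber formal group \eqref{FB}: it has exactly the form \eqref{FB} with $A(0)=B(0)=1$, and the defining relation \eqref{F4!} is an algebraic relation among the coefficients $A_k, B_k$. So the first step is to verify that $R_4$, as presented, is the ring of coefficients of this specialization.

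First I would note that Lemma~\ref{lr} establishes that the Buchstaber formal group \eqref{FB} is generating over the ring $R_B = \mathbb{Z}[A_1, A_3, A_4, \ldots, B_2, B_3, B_4, \ldots]/I_{ass}$, under the normalization $A_2 = B_1 = 0$ already imposed in Section~\ref{s5}. The ring $R_4$ differs from $R_B$ only by quotienting out the additional ideal $I_4$ generated by relation~\eqref{F4!}. Quotienting the coefficient ring of a generating formal group by relations among its coefficients yields a specialization whose coefficient ring is the quotient, provided the quotient is taken consistently with the formal-group structure — which is automatic here, since $I_4$ is generated by a relation among the very coefficients $A_k, B_k$ that generate $R_B$. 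Thus the second step is to observe that the image of the Buchstaber formal group under the projection $R_B \to R_B/\{I_4\} = R_4$ is precisely \eqref{f4} subject to \eqref{F4!}.

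The property of being \emph{generating} is then inherited: a formal group $F$ over a ring $S$ is generating if $S$ is multiplicatively generated by the coefficients $a_{i,j}$ of $F$. Since \eqref{FB} is generating over $R_B$ by Lemma~\ref{lr}, the coefficients $\alpha_{i,j}$ of \eqref{FB} generate $R_B$ multiplicatively; applying the ring epimorphism $R_B \twoheadrightarrow R_4$, their images — which are exactly the coefficients of \eqref{f4} over $R_4$ — multiplicatively generate $R_4$. Hence \eqref{f4} is generating over $R_4$, and the Lemma follows from Lemma~\ref{lr} exactly as in the level~$2$ and level~$3$ cases.

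**The main subtlety**, and the only point requiring genuine care rather than formal bookkeeping, is confirming that $R_4$ as defined is well-posed as a coefficient ring: namely that the ideal $I_4$ generated by \eqref{F4!} lies in the polynomial ring on the stated generators and that imposing \eqref{F4!} is compatible with the already-imposed normalizations $B_1 = A_2 = 0$. I would check that expanding \eqref{F4!} order-by-order in $u$ produces relations whose lowest-order content is consistent with $B_1 = A_2 = 0$ (the $u^1$ and $u^2$ coefficients should impose no new constraint beyond these, or reproduce them), so that the presentation $\mathbb{Z}[A_1, A_3, A_4, \ldots, B_2, B_3, \ldots]/\{I_{ass}, I_4\}$ is the faithful coefficient ring of the specialization. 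Once this compatibility is verified, the reduction to Lemma~\ref{lr} is immediate and the proof is complete.
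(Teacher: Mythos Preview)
Your proposal is correct and follows exactly the paper's approach: the paper's proof is simply ``This formal group is a specialization of Buchstaber formal group \eqref{FB}, determined by the relation \eqref{F4!}. Thus the Lemma follows from Lemma~\ref{lr}.'' Your expanded version spells out the inheritance of the generating property under the quotient map $R_B \twoheadrightarrow R_4$ and adds a compatibility check for the normalizations $B_1=A_2=0$, which is reasonable extra care but not something the paper deems necessary.
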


\begin{proof}
This formal group is a specialization of Buchstaber formal group \eqref{FB}, determined by the relation \eqref{F4!}. 
Thus the Lemma follows from Lemma \ref{lr}.
\end{proof}

\begin{thm}
The ring $R_4$ is multiplicatively generated by the elements $h_4(e_n)$,
where $n = 1, 2, 3, 4$, $n = 2^r$ (where $r \geqslant 3$), and $n = 2^r-2$ (where $r \geqslant 3$).
For the function $\rho(n) = \rho_4(n)$, we have 
\[
 \rho_4(n) = \left\{ \begin{matrix}
                     \infty, & \text{ for } n = 1,2,\\
                     4, & \text{ for } n = 3,\\
                     8, & \text{ for } n = 4,\\
                     2, & \text{ for } n = 2^r \text { and } n = 2^r - 2, \quad r \geqslant 3,\\
                     1 & \text{ in the other cases.} 
                    \end{matrix}
 \right.
\]
The ring $R_4$ has torsion of order two. The ideal of elements of order two is generated by the ideal $I_B$ (see Theorem \ref{tBU}) and the element $A_1^3 - 2 A_1 B_2 + B_3$. Factorizing by this ideal we obtain a torsion-free ring. 
\end{thm}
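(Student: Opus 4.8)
The plan is to exploit that $R_4$ is a quotient of the Buchstaber ring $R_B$ and to reduce everything to the single extra relation \eqref{F4!}. Indeed, by the Lemma preceding the statement $R_4 = \mathbb{Z}[A_1,A_3,A_4,\dots,B_2,B_3,\dots]/\{I_{ass},I_4\}$, while $R_B = \mathbb{Z}[A_1,A_3,\dots,B_2,B_3,\dots]/I_{ass}$ by Lemma~\ref{lr}; hence $R_4 = R_B/\overline{I_4}$ and the classifying homomorphism factors as $h_4 = \pi\circ h_B$ with $\pi\colon R_B\to R_4$ the projection. Thus $h_4(e_n)=\pi(h_B(e_n))$, and since Theorem~\ref{tBU} already records the classes $h_B(e_n)$ and the orders $\rho_B(n)$, I would reduce the problem to understanding how the principal ideal generated by \eqref{F4!} acts on the indecomposable quotient $R_B/I^2$ weight by weight.

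The main computational device will be the logarithm. For the Buchstaber group \eqref{FB} one has $\partial_v F(u,v)|_{v=0}=B(u)+A_1 u$, so its logarithm is $g(u)=\int_0^u dt/(B(t)+A_1 t)$, an explicit series whose coefficients $g_{k}=[u^{k}]g$ are polynomials in $A_1,B_2,B_3,\dots$. A standard manipulation of $g(F(u,v))=g(u)+g(v)$ gives, modulo decomposables, $a_{i,j}\equiv -\binom{i+j}{i}g_{i+j}$, whence by \eqref{en}
\[
h_B(e_n)\equiv -d(n+1)\,g_{n+1}\pmod{I^2}.
\]
Therefore $\rho_4(n)$ is precisely the order of the class of $d(n+1)\,g_{n+1}$ in the indecomposable module of $R_4$. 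First I would expand \eqref{F4!} order by order: it expresses each $A_k$ $(k\geq 3)$ through $A_1$ and the $B_j$, so that $g$ becomes the logarithm of the level-$4$ elliptic function, and the relations it imposes on indecomposables are read off from the weight-homogeneous linear part of $[u^{k}]$\eqref{F4!} modulo $I^2$.

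With this in hand the generating set and the function $\rho_4$ should follow by a weight-by-weight analysis. At the low weights one checks directly that $e_1,e_2$ remain free ($\rho_4=\infty$), while $[u^3]$ and $[u^4]$ of \eqref{F4!} cut the previously free classes $e_3,e_4$ down to finite cyclic order, giving $\rho_4(3)=4$ and $\rho_4(4)=8$. For an odd prime power $n=p^r$ the relation \eqref{F4!} makes the class $d(n+1)g_{n+1}$ decomposable, so these generators disappear ($\rho_4=1$); by contrast the $2$-primary classes $e_{2^r}$ and the classes $e_{2^r-2}$ $(r\geq 3)$ already carried by $R_B$ survive with $\rho_4=2$, since \eqref{F4!} does not alter their $2$-adic behaviour. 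Combining the cases yields the stated multiplicative generators together with the formula for $\rho_4$.

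Finally, for the torsion I would start from Theorem~\ref{tBU}: $R_B$ has only $2$-torsion, concentrated in the ideal $I_B$ generated by the $h_B(e_{2^k-2})$. Its image persists in $R_4$, and the new relation contributes one further order-two class, namely $A_1^3-2A_1B_2+B_3$; one verifies that $2\,(A_1^3-2A_1B_2+B_3)=0$ in $R_4$ while this element is nonzero modulo $I_B$, so that the order-two ideal is exactly $\langle I_B,\,A_1^3-2A_1B_2+B_3\rangle$ and the quotient by it is torsion-free, exactly as for $R_B/I_B$. The hard part throughout will be the uniform control of the $p$-adic valuation of $d(n+1)g_{n+1}$ for the level-$4$ logarithm: one must show simultaneously, for all $n$, that odd primes are flushed out by \eqref{F4!} whereas the single power of $2$ persists, and this is where the specific $2$-integral arithmetic of the level-$4$ elliptic function (its division values) enters decisively.
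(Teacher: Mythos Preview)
The paper does not supply a proof of this theorem: it is stated as a new result in Section~\ref{e4} with no argument, the next line being Theorem~\ref{T43}. So there is no ``paper's own proof'' to compare against; one can only assess whether your outline is a viable route to the result.

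Your framework is the right one and matches the methodology of \cite{BU,BBU} used for levels~$2$ and~$3$: pass from $R_B$ to $R_4=R_B/\overline{I_4}$, compute $h_B(e_n)\equiv -d(n{+}1)\,g_{n+1}\pmod{I^2}$ via the logarithm $g'(u)=(B(u)+A_1u)^{-1}$, and then track how the single relation \eqref{F4!} alters the indecomposable quotient weight by weight. That said, what you have written is a plan, not a proof, and several of the asserted steps are precisely where the content lies. For instance, expanding \eqref{F4!} at $u^3$ gives $6A_3=A_1^{3}-2A_1B_2+4B_3$ in $R_4$; to conclude $\rho_4(3)=4$ you must combine this with the associativity relations already present in $R_B$ at that weight (where $\rho_B(3)=\infty$) and check that the resulting $\mathbb{Z}$-module in degree~$-6$ of $R_4/I^2$ is exactly $\mathbb{Z}\oplus\mathbb{Z}/4$ and not something else. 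Your sentence ``it expresses each $A_k$ through $A_1$ and the $B_j$'' is not literally true over~$\mathbb{Z}$ (the linearisation of \eqref{F4!} modulo $I^2$ is $3A_k\equiv 2B_k$ for $k\ge 3$), and this $2$--$3$ interaction is exactly what produces the values $4$ and~$8$ at $n=3,4$ and kills the odd-prime generators; it needs to be made explicit. Likewise, ``one verifies that $2(A_1^{3}-2A_1B_2+B_3)=0$ in $R_4$'' and that the quotient by $\langle I_B,\,A_1^{3}-2A_1B_2+B_3\rangle$ is torsion-free are the substantive torsion statements, not afterthoughts; they require the same kind of careful bookkeeping carried out for $R_B$ in \cite[\S6]{BU}. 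In short: the strategy is sound and is presumably the one behind the paper's unproved assertion, but the proposal as written defers every nontrivial verification.
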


\begin{thm}[Theorem 7.12 in \cite{B19}] \label{T43}
The elliptic function of level $4$ is the exponential of the universal formal group,
which is a specialization of Buchstaber formal group \eqref{FB},
determined by the relations
\begin{align} \label{F4z}
2 A_3 &= - A_1 (A_1^2 - 2 B_2), & 8 B_4 &= 3 A_1^4 - 4 A_1^2 B_2 - 4 B_2^2.
\end{align} 
\end{thm}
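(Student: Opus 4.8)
The plan is to prove the statement by reducing the power-series identity \eqref{F4!}, which by Theorem \ref{T41} characterises the universal level-$4$ group $R_4 = R_B/I_4$, to the two polynomial relations \eqref{F4z}; here $I_4\subset R_B$ is generated by the coefficients of \eqref{F4!} and $J_4\subset R_B$ by \eqref{F4z}. Since the conclusion is a statement about the exponential, an element of $(R_B/J_4)\otimes\mathbb{Q}[[x]]$, it suffices to establish $I_4\otimes\mathbb{Q}=J_4\otimes\mathbb{Q}$. By Theorem \ref{tBU} one has $\rho_B(n)=\infty$ precisely for $n=1,2,3,4$, so $R_B\otimes\mathbb{Q}=\mathbb{Q}[A_1,B_2,A_3,B_4]$ is a polynomial ring in four variables, all remaining coefficients $A_k,B_k$ being polynomials in these four forced by the associativity ideal $I_{ass}$. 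I shall write $V(\mathfrak a)$ for the zero locus of an ideal $\mathfrak a$ in $\mathbb{A}^4=\mathrm{Spec}(R_B\otimes\mathbb{Q})$.

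The first step is to make the low-weight relations in $I_{ass}$ explicit. From \eqref{FB} one finds $\partial_v F(u,0)=B(u)+A_1u$, which equals $1/g'(u)$ for the logarithm $g$ of the Buchstaber group and so expresses $g$ through $A_1$ and $B$; matching $F(u,v)=f(g(u)+g(v))$ against \eqref{FB} degree by degree produces the relations of $I_{ass}$. At weight $-6$ this gives $B_3=2A_3$, and at weight $-8$ the analogous relation expressing the (decomposable) coefficient $A_4$ through $A_1,B_2,A_3$, with $B_4$ remaining the free $n=4$ generator.

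Next I treat the inclusion $V(I_4)\subseteq V(J_4)$, i.e.\ that the level-$4$ group satisfies \eqref{F4z}. Expanding \eqref{F4!} in powers of $u$, the coefficients of $u^0,u^1,u^2$ are identities; the coefficient of $u^3$ reads $8B_3=12A_3-2A_1^3+4A_1B_2$, and substituting $B_3=2A_3$ collapses it to $2A_3=-A_1(A_1^2-2B_2)$, the first relation of \eqref{F4z}; the coefficient of $u^4$, after using $B_3=2A_3$ together with the weight $-8$ relation for $A_4$, collapses to the second relation of \eqref{F4z}. For the reverse inclusion I would avoid checking the full series \eqref{F4!} and argue by dimension: over $\mathbb{Q}$ the relations \eqref{F4z} solve for the generators $A_3$ and $B_4$ in terms of $A_1,B_2$, so $V(J_4)\cong\mathbb{A}^2$ is irreducible of dimension two, while by Theorem \ref{T41} the locus $V(I_4)=\mathrm{Spec}(R_4\otimes\mathbb{Q})=\mathrm{Spec}\,\mathbb{Q}[A_1,B_2]$ is also an irreducible surface. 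The inclusion just proved together with equality of dimensions forces $V(I_4)=V(J_4)$, and as both ideals are prime over $\mathbb{Q}$ we conclude $I_4\otimes\mathbb{Q}=J_4\otimes\mathbb{Q}$. Hence over $\mathbb{Q}$ the Buchstaber group cut out by \eqref{F4z} coincides with the universal level-$4$ group of Theorem \ref{T41}, whose exponential is the elliptic function of level $4$.

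The step I expect to be the main obstacle is the associativity bookkeeping at weight $-8$: this is the one genuinely new computation, and one must verify that the resulting expression for $A_4$ combines with the $u^4$-coefficient of \eqref{F4!} to reproduce the second relation of \eqref{F4z} exactly, and not merely up to a decomposable term with the same leading part. A secondary point to secure is the reducedness and irreducibility of $V(J_4)$, needed to upgrade the set-theoretic equality of loci to the equality of ideals; as an independent check that the resulting exponential is genuinely of level $4$ I would specialise to the Krichever form and compare with Corollary \ref{CorKri} at $\rho=\omega/4$.
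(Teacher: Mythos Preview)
The paper does not contain a proof of Theorem~\ref{T43}: it is quoted verbatim from \cite{B19} (as ``Theorem~7.12 in \cite{B19}''), so there is no in-paper argument to compare your proposal against. That said, a few comments on your strategy are in order.

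Your overall plan is sound and is the natural one given how the paper is organised: you take Theorem~\ref{T41} (the characterisation via the series identity \eqref{F4!}) as input, and reduce the statement to showing that, after tensoring with $\mathbb{Q}$, the ideal $I_4$ generated by the coefficients of \eqref{F4!} coincides with the ideal $J_4$ generated by \eqref{F4z}. The inclusion $J_4\subset I_4$ via the $u^3$ and $u^4$ coefficients of \eqref{F4!}, together with the dimension/irreducibility count for the reverse inclusion, is a clean way to close the argument without expanding \eqref{F4!} to all orders.

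Two points deserve care. First, your assertion that associativity forces $B_3=2A_3$ exactly (not merely modulo decomposables) is a genuine claim about $I_{ass}$ that you have not verified; the structure theorem for $R_B$ only tells you that $B_3$ and $A_3$ differ by a decomposable in weight $-6$, and any stray $A_1B_2$ or $A_1^3$ term would alter the $u^3$ computation. You should derive this relation explicitly, for instance from the expansion of $F(u,v)$ you already computed via $\partial_vF(u,0)=B(u)+A_1u$. Second, when you invoke ``Theorem~\ref{T41}'' for the fact that $R_4\otimes\mathbb{Q}\cong\mathbb{Q}[A_1,B_2]$, that theorem does not literally say this; you are really using either the $\rho_4$ computation stated just before Theorem~\ref{T43}, or an independent parameter count for the level-$4$ elliptic function via Corollary~\ref{CorKri}. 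Make that dependence explicit, and check that you are not appealing to anything whose proof in turn rests on Theorem~\ref{T43}.
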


\begin{cor}
 The ring of coefficients of the formal group from Theorem \ref{T43} is~$R_B / J_4$, where $J_4$ is the ideal generated by the relations \eqref{F4z}. The formal group is generating over this ring.
\end{cor}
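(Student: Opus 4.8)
The plan is to derive the Corollary directly from Lemma~\ref{lr} together with the elementary fact that the property of being a \emph{generating} formal group passes to quotient rings. First I would fix precisely which formal group and which ring are involved. The ring $R_B = \mathbb{Z}[A_1, A_3, A_4, \ldots, B_2, B_3, B_4, \ldots]/I_{ass}$ is the universal ring for Buchstaber formal groups of the form~\eqref{FB}: specifying such a group over a ring $R$ amounts to choosing coefficients $A_k, B_k \in R$ (with $A_2 = B_1 = 0$) subject to associativity, so every Buchstaber group is classified by a unique homomorphism out of $R_B$. The relations~\eqref{F4z} are polynomial identities among the elements $A_1, A_3, B_2, B_4$ of $R_B$, hence generate a well-defined ideal $J_4 \subset R_B$, and a Buchstaber group satisfies~\eqref{F4z} exactly when its classifying homomorphism factors through $R_B / J_4$. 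Thus the formal group of Theorem~\ref{T43} is the reduction of~\eqref{FB} modulo $J_4$, defined over $R_B / J_4$. Since a formal group is generating over a ring precisely when that ring equals its ring of coefficients, the two assertions of the Corollary coincide, and it suffices to show this reduced group is generating over $R_B / J_4$.

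The key step is the descent of the generating property. By Lemma~\ref{lr} the group~\eqref{FB} is generating over $R_B$, so $R_B$ is multiplicatively generated by the coefficients $a_{i,j}$ of~\eqref{FB}. Applying the quotient map $R_B \to R_B / J_4$, the images $\bar a_{i,j}$ generate $R_B / J_4$, since the image of a generating set generates the quotient. But the $\bar a_{i,j}$ are exactly the coefficients of the reduced formal group over $R_B / J_4$, so that group is generating and its ring of coefficients is $R_B / J_4$, as required.

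I do not anticipate a real obstacle: the argument is identical in spirit to the one giving the corresponding corollaries in the cases $N=2$ and $N=3$. The only point needing attention is bookkeeping --- checking that~\eqref{F4z} defines an ideal inside $R_B$ (not merely in the polynomial ring before imposing $I_{ass}$) and that reduction modulo $J_4$ preserves the Buchstaber form and associativity. Both are automatic, because generation survives any ring quotient and requires no compatibility with $I_{ass}$ beyond what is already built into $R_B$. One could go further and compute the function $\rho$ on $R_B / J_4$, as is done after the $N=2$ and $N=3$ corollaries, to display an explicit minimal set of generators, but that refinement is not needed for the statement here.
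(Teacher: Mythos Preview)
Your argument is correct and matches the paper's own reasoning: the paper treats this as an immediate corollary (no proof is written out), relying implicitly on Lemma~\ref{lr} and the fact that the generating property descends to quotients, exactly as you spell out. Your write-up simply makes explicit what the paper leaves tacit.
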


Over the ring $R_B / J_4$ we have $\rho(n) = \rho_B(n)$ for $n \ne 3,4$ and $\rho(3) = 4$, $\rho(4) = 8$.

\begin{cor}
For the classifying homomorphism 
$h_F\colon \R_U \to R_B / J_4$ the image of~the elements $e_n$ for  $n = p^r$ (where $r \geqslant 1$ and $p$ is prime, $p \ne 2$, $n \ne 3$) give elemets of~torsion~$p$.
\end{cor}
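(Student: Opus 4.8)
The plan is to reduce the statement to the vanishing of a single decomposable element and then to establish that vanishing by comparison with the torsion-free coefficient ring.

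First I would record that the formal group over $R_B/J_4$ is the Buchstaber formal group \eqref{FB} composed with the projection $\pi\colon R_B\to R_B/J_4$, so that $h_F=\pi\circ h_B$. For $n=p^r$ with $p$ an odd prime and $n\ne 3$ one has $n\notin\{2,3,4\}$, hence $\rho_B(n)=p$ by Theorem \ref{tBU}; by the computation of $\rho$ over $R_B/J_4$ recorded above (valid since $n\ne 3,4$) we get $\rho(n)=\rho_B(n)=p$. By definition of $\rho$ this means $h_F(e_n)\notin I^2$, so $h_F(e_n)$ is a nonzero indecomposable element, while $p\,h_F(e_n)\in I^2$. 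Thus the Corollary is equivalent to the single assertion that this decomposable element vanishes, i.e. $p\,h_F(e_n)=0$ in $R_B/J_4$, equivalently $p\,h_B(e_n)\in J_4$.

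The crucial point is that this vanishing is \emph{not} formal: by Theorem \ref{tBU} the ring $R_B$ has only $2$-torsion, so for odd $p$ the element $p\,h_B(e_n)$ is a \emph{nonzero} decomposable $D_n\in R_B$, and it is exactly the relations \eqref{F4z} that must force $D_n\in J_4$. To exploit this I would compare $R_B/J_4$ with its torsion-free quotient $S$, the image of $R_B/J_4$ in $(R_B/J_4)\otimes\mathbb Q$; the classifying property yields a graded epimorphism $\phi\colon R_B/J_4\to S$ whose kernel is precisely the torsion subgroup. Over $\mathbb Q$ the exponential is the explicit level-$4$ elliptic function of Corollary \ref{CorKri}, whose only moduli are the lattice and the $4$-torsion point, so the indecomposables of $S\otimes\mathbb Q$ are concentrated in the low weights $-2,-4$ carrying these moduli (matching $\rho(n)=\infty$ only for $n=1,2$) and vanish in weight $-2p^r$. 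Hence $\phi(h_F(e_n))$ is decomposable in the torsion-free ring $S$, and, using the freedom in the choice of the representatives $e_n$ in \eqref{en} noted in the Remark of Section \ref{s3}, one may normalise $e_n$ so that $\phi(h_F(e_n))=0$. Then $h_F(e_n)$ is a nonzero torsion element; localising at each prime $q\ne p$ kills the order-$p$ class of $h_F(e_n)$ in the module of indecomposables, so $h_F(e_n)$ is $p$-primary, and identifying the $p$-primary torsion of $R_B/J_4$ in weight $-2p^r$ as the cyclic group $\mathbb Z/p$ generated by $h_F(e_n)$ gives $p\,h_F(e_n)=0$.

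The main obstacle is precisely this last identification: that $D_n=p\,h_B(e_n)$ lies in $J_4$ for \emph{all} prime powers $n=p^r$ at once, equivalently that the odd-order torsion of $R_B/J_4$ in weight $-2p^r$ is exactly $\mathbb Z/p$ with no residual nonzero decomposable. I expect this to require either an induction on $n$ in which the higher associativity consequences of \eqref{FB} are reduced, weight by weight, to the two generating relations \eqref{F4z}, or an explicit integral description of $S$ via the Tate model of Section \ref{s6}; in either approach the odd-$p$ torsion must be carefully separated from the order-$2$ torsion coming from $I_B$ and from the powers of $2$ in \eqref{F4z} (the coefficients $2$ and $8=2^3$), which govern the anomalous values $\rho(3)=4$ and $\rho(4)=8$ rather than the clean odd torsion treated here.
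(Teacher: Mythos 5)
Your first paragraph is precisely the paper's own derivation of this corollary: it is read off from Theorem \ref{tBU} together with the displayed equality $\rho(n)=\rho_B(n)=p$ over $R_B/J_4$ (valid since $n=p^r$ with $p\ne 2$, $n\ne 3$ forces $n\notin\{2,3,4\}$), using that $\rho(n)$ is by definition the order of $h_F(e_n)$ in $R/I^2$. Up to that point the proposal is correct and is all the paper itself invokes; no further argument is supplied in the text.

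The remainder of your proposal, which aims at the stronger assertion $p\,h_F(e_n)=0$ in the ring $R_B/J_4$ itself, contains genuine gaps. First, the inference that $\phi(h_F(e_n))$ is decomposable in the torsion-free quotient $S$ because the indecomposables of $S\otimes\mathbb{Q}$ sit in weights $-2,-4$ is invalid: rational decomposability does not imply integral decomposability even in a torsion-free ring. The paper itself provides the counterexample: $R_2$ is torsion-free and $R_2\otimes\mathbb{Q}$ is polynomial on generators of weights $-4$ and $-8$, yet $\rho_2(2^k)=2$ for $k\geqslant 3$ (Theorem \ref{t63}), i.e.\ $h_2(e_{2^k})$ is \emph{not} decomposable in $R_2$ although it becomes so over $\mathbb{Q}$. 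Second, the step ``one may normalise $e_n$ so that $\phi(h_F(e_n))=0$'' is unsupported: the Remark in Section \ref{s3} says only that $\rho$ is independent of the choice of the coefficients $\lambda_i$ in \eqref{en}; the admissible modifications of $e_n$ are the specific combinations $\sum_i\mu_i\,\alpha_{i,n+1-i}$ with $\sum_i\mu_i\binom{n+1}{i}=0$, and nothing shows their images in $S$ realise the class you need to cancel. Third, you acknowledge that the decisive fact --- that the nonzero decomposable $p\,h_B(e_n)\in R_B$ (nonzero since $R_B$ has only $2$-torsion) actually lies in $J_4$ --- is not established; this is exactly where the relations \eqref{F4z} would have to create new odd torsion, and the proposal leaves it as an open obstacle. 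So either the corollary is read, as the paper's placement suggests, as a statement about the order of $h_F(e_n)$ modulo decomposables, in which case your first paragraph is the entire proof and the rest is superfluous; or it is read as honest $p$-torsion in $R_B/J_4$, in which case your argument does not close and two of its intermediate steps as written would fail.
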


The elliptic function of level $4$ can not be presented in the form $f_T(z)$ (see Theorem~\ref{thexp}). Thus we can not present it as an exponential of a specialization of the Tate formal group.

\vfill
\eject

\section{Rings of coefficients for universal formal groups that correspond to~elliptic~genus~of~level~$5$} \label{e5}

\begin{thm}[Theorem 7.15 in \cite{B19}] \label{T53}
The elliptic function of level $5$ is the exponential of the universal formal group,
which is a specialization of Buchstaber formal group \eqref{FB},
determined by the relations 
\begin{align} \label{F5z}
B_4 &= A_1^4 - 3 A_1^2 B_2 + B_2^2 + 4 A_1 A_3, \\
A_3^2 - 2 A_1 (5 A_1^2 - 2 B_2) A_3 &= (2 A_1^2 - B_2) (A_1^4 - 3 A_1^2 B_2 + B_2^2). \nonumber
\end{align} 
\end{thm}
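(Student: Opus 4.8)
The plan is to derive the relations \eqref{F5z} from the defining specialization \eqref{GenEq} for $N=5$. Here $n=2$ and $m=1$, so \eqref{GenEq} becomes
\begin{equation} \label{star5}
(B(u) + b_1 u)^2\,(B(u) + b_2 u) = A(u)^2\,\bigl(A(u) + a_1 u^2\bigr)^2 .
\end{equation}
By the solution of Problem \ref{prob1} for $N=5$ given in \cite{B19}, the universal formal group of the elliptic genus of level $5$ is a Buchstaber formal group \eqref{FB} satisfying \eqref{star5} for a suitable choice of the three auxiliary parameters $a_1,b_1,b_2$. It therefore remains to eliminate $a_1,b_1,b_2$ and to recognize the surviving relations on $A_1,A_3,B_2,B_4$ as \eqref{F5z}.

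First I would record the associativity relations of $R_B$ in low weight, as these are what make the elimination finite. Differentiating \eqref{FB} gives $\left.\partial_v F\right|_{v=0} = B(u) + A_1 u$, so the logarithm of \eqref{FB} satisfies $g'(u) = \bigl(B(u)+A_1 u\bigr)^{-1}$; comparing $\left.\partial_v^2 F\right|_{v=0}$ computed from \eqref{FB} with the same quantity computed from $F(u,v)=f(g(u)+g(v))$ expresses $A(u)$ through $B(u)$ and $A_1$. In low weight this yields $B_3 = 2A_3$, $A_4 = -A_1 A_3$, and one weight higher the relations determining $A_5$ and $B_5$ through $A_1,B_2,A_3,B_4$. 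By Theorem \ref{tBU} the ring $R_B\otimes\mathbb{Q}$ has exactly four generators, of weight $-2,-4,-6,-8$, and is polynomial on them; we may take these to be $A_1,B_2,A_3,B_4$, and the relations above express every higher coefficient through these four.

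Next I would expand \eqref{star5} in powers of $u$. The coefficients of $u$ and $u^2$ give $b_2 = 4A_1 - 2b_1$ and an expression for $a_1$ in terms of $b_1,A_1,B_2$, so that $b_2$ and $a_1$ are eliminated at once. Substituting these together with the associativity relations, the coefficients of $u^k$ for $k\geqslant 3$ become polynomial equations in the single remaining parameter $b_1$ with coefficients in $\mathbb{Z}[A_1,B_2,A_3,B_4]$: the $u^3$-equation presents $A_3$ as a cubic in $b_1$, the $u^4$-equation presents $B_4$, and the $u^5$- and $u^6$-equations (after the degree-five associativity relations remove $A_5,B_5$) are further constraints. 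Eliminating $b_1$ from these lowest-order equations produces an ideal of height two in $\mathbb{Z}[A_1,B_2,A_3,B_4]$, which I claim is generated by \eqref{F5z}; the first relation, linear in $B_4$, records the value of $B_4$ on the level-$5$ locus, while the second, quadratic in $A_3$, is the branch-selection condition. A convenient check is the slice $A_1=0$, where the system collapses to $A_3 = b_1^3$ and $B_4 = \tfrac34 b_1^4 - \tfrac12 B_2 b_1^2 - \tfrac14 B_2^2$ with the correct branch singled out by $b_1^2 = -B_2$, giving $A_3^2 = -B_2^3$ and $B_4 = B_2^2$, which are exactly \eqref{F5z} at $A_1=0$.

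The main obstacle is this elimination. Because $b_1$ enters the $u^3$-relation cubically, the elimination is genuinely nonlinear: the resultant of the relations splits into a level-$5$ factor and spurious factors belonging to lower-level or degenerate specializations, and one must argue that only the level-$5$ factor survives — this branch selection is precisely the source of the quadratic dependence on $A_3$ in the second relation of \eqref{F5z}. Two further points need care. The computation must be carried out over $\mathbb{Z}$, not merely over $\mathbb{Q}$, because $R_B$ carries $2$-torsion (Theorem \ref{tBU}) while \eqref{F5z} is asserted integrally. And one must verify that \eqref{F5z} together with the associativity ideal already imply the coefficient equations of \eqref{star5} at all orders $u^k$, $k\geqslant 7$, so that no further generators of $J_5$ appear, $J_5$ is exactly the ideal generated by \eqref{F5z}, and the formal group is generating over $R_B/J_5$.
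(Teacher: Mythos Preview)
The present paper does not give a proof of this theorem: it is quoted as Theorem~7.15 of \cite{B19} and followed immediately by its corollary, with no argument supplied here. So there is no proof in this paper to compare your attempt against; you are in effect reconstructing what the argument in \cite{B19} must look like.

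Your strategy is the natural one and almost certainly the one used in \cite{B19}: instantiate \eqref{GenEq} at $N=5$, read off $b_2=4A_1-2b_1$ and $a_1$ from the $u^1$ and $u^2$ coefficients, feed in the low-degree associativity relations of $R_B$ so that everything lives in $\mathbb{Z}[A_1,B_2,A_3,B_4][b_1]$, and then eliminate $b_1$. Your $A_1=0$ sanity check is correct, and you have identified the genuine difficulties: the elimination is cubic in $b_1$, so the resultant will factor and one must isolate the level-$5$ component; and the computation must be done over $\mathbb{Z}$ because of the $2$-torsion in $R_B$.

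What is missing is the execution. The proposal stops exactly at the point where the work begins: you do not carry out the resultant, do not exhibit its factorization, and do not explain on what grounds the spurious factors are discarded. You also need the converse direction more explicitly than you state it. Your final paragraph checks that no \emph{new} relations appear at orders $u^k$ with $k\geqslant 7$, but that only shows \eqref{F5z} generates the elimination ideal; to conclude that the formal group over $R_B/J_5$ really has the level-$5$ exponential you still need that over this ring there exists a $b_1$ (hence $a_1,b_2$) solving \eqref{F5!}, i.e.\ that the projection from the locus \eqref{F5!} onto the locus \eqref{F5z} is onto. Without this lift, the identification of \eqref{F5z} with the level-$5$ specialization is one inclusion short.
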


\begin{cor}
 The ring of coefficients of the formal group from Theorem \ref{T53} is~$R_B / J_5$, where $J_5$ is the ideal generated by the relations \eqref{F5z}. The formal group is generating over this ring.
\end{cor}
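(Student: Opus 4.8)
The plan is to deduce the statement from Lemma~\ref{lr} together with the identification furnished by Theorem~\ref{T53}, exactly as in the cases $N=2,3,4$ treated above.

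By Theorem~\ref{T53}, the formal group in question is precisely the specialization of the Buchstaber formal group~\eqref{FB} obtained by imposing the relations~\eqref{F5z} on the parameters $A_k,B_k$. Imposing these relations is the same as passing to the quotient ring $R_B/J_5$, where $J_5\subset R_B$ is the ideal generated by~\eqref{F5z}; hence the specialized formal group is, by construction, defined over $R_B/J_5$. It therefore suffices to prove that it is generating over this ring, since the ring of coefficients is by definition the subring multiplicatively generated by the coefficients of the formal group, so the generating property over $R_B/J_5$ delivers both assertions of the corollary at once.

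To establish the generating property I would invoke Lemma~\ref{lr}, which states that the Buchstaber formal group is generating over $R_B$; equivalently, the coefficients $a_{i,j}$ of~\eqref{FB}, viewed as polynomials in the $A_k$ and $B_k$, multiplicatively generate $R_B$. Writing $\pi\colon R_B\to R_B/J_5$ for the canonical projection, the coefficients of the specialized formal group are the images $\pi(a_{i,j})$. Since $\pi$ is a surjective ring homomorphism, the subring of $R_B/J_5$ generated by the $\pi(a_{i,j})$ is the image under $\pi$ of the subring of $R_B$ generated by the $a_{i,j}$, namely $\pi(R_B)=R_B/J_5$. Thus the generating property descends to the quotient, and the specialized formal group is generating over $R_B/J_5$, which completes the argument.

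I do not anticipate a genuine obstacle, since the substantive content is already packaged in the cited results: Theorem~\ref{T53} pins down the defining relations~\eqref{F5z}, while Lemma~\ref{lr} supplies the generating property over $R_B$, and the remaining step is the formal observation that generating descends along surjections. The only point meriting care is that $J_5$ is declared to be the ideal generated by~\eqref{F5z} with no further relations adjoined, so that $R_B/J_5$ is the ring of definition by construction and the generating property then forces it to be the full ring of coefficients rather than a proper subring. I note in passing that the second relation in~\eqref{F5z} is quadratic in $A_3$, so that $R_B/J_5$ has a more intricate structure than in the cases $N\le 4$; this, however, plays no role in the present argument.
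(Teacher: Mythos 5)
Your proposal is correct and matches the paper's (implicit) argument: the paper proves the analogous statements for $N=2,3,4$ by observing that the formal group is a specialization of the Buchstaber formal group and invoking Lemma~\ref{lr}, and the $N=5$ corollary follows the same pattern, with the generating property descending along the surjection $R_B \to R_B/J_5$ exactly as you describe. Your closing remark is also apt: the key point distinguishing this case from the (non-generating) universal formal group of the form~\eqref{f5} over $R_5$ is precisely that $J_5$ involves no auxiliary parameters, so no extra generators are adjoined to $R_B$.
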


\begin{thm}[Theorem 7.14 in \cite{B19}] \label{T5}
The elliptic function of strict level $5$ is the exponential of the universal formal group,
which is a specialization of Buchstaber formal group \eqref{FB},
determined by the condition that there exist $(a_1, b_1, b_2)$, such that
\begin{equation} \label{F5!}
\left(B(u) + b_1 u\right)^2 (B(u) + b_2 u)  = A(u)^2 \left(A(u) + a_1 u^2\right)^2, 
\end{equation} 
but the relations \eqref{F2!}, \eqref{F3!} or \eqref{F4!} do not hold.
\end{thm}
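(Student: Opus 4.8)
The plan is to turn the power-series identity \eqref{F5!} into an identity between global meromorphic functions of $z$ and to verify the latter for the level-$5$ Krichever function by a divisor count. I first record two identities valid near $z=0$ for the exponential $f$ of any Buchstaber group \eqref{FB}: differentiating $f(x+y)=F(f(x),f(y))$ in $y$ at $y=0$ gives $f'=B(f)+A_1 f$, while comparing the coefficient of $v^2$ in \eqref{FB} with the second-order term of $f(x+y)$ yields
\[
A(f)=(f')^2-\tfrac12 f f''-\tfrac{A_1}{2} f f'-B_2 f^2=:\widetilde A .
\]
Substituting $u=f(z)$ into \eqref{F5!} and using these, \eqref{F5!} becomes near $z=0$ the identity
\[
\bigl(f'+\beta_1 f\bigr)^2\bigl(f'+\beta_2 f\bigr)=\widetilde A^2\bigl(\widetilde A+a_1 f^2\bigr)^2,\qquad \beta_j=b_j-A_1 ,
\]
which I abbreviate $(\ast)$. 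As both sides are global meromorphic functions agreeing near $0$, \eqref{F5!} holds as a power series precisely when $(\ast)$ holds identically.

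For the forward implication I take $f$ as in \eqref{fKr} with $\rho=\omega/5$ and the level-$5$ value of $\alpha$ from Corollary \ref{CorKri}. A direct computation gives $A_1=2\alpha$ and $B_2=\tfrac32\wp(\rho)-\tfrac12\alpha^2$, whence $\widetilde A=\tfrac12 f^2\bigl(\psi^2-\psi'-A_1\psi-2B_2\bigr)$ with $\psi=f'/f$ an elliptic function of degree two (simple poles at $0$ and $\rho$); one checks the top Laurent terms at $\rho$ cancel, so $\widetilde A$ has only a simple pole there. With $\varepsilon=\exp(-2\pi I/5)$ as in Lemma \ref{llN}, under $z\mapsto z+\omega'$ the left side of $(\ast)$ scales by $\varepsilon^3$ and the right side by $\varepsilon^8$; these agree since $\varepsilon^5=1$, so both sides are sections of one degree-zero line bundle on $\mathbb{C}/l$, each with a pole of order exactly $6$ at $\rho$ and no other pole. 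It then remains to match six zeros: the zeros of $f'+\beta_j f$ are the two solutions of $\psi=-\beta_j$, so I would choose $b_2$ so that $-\beta_2$ is a critical value of $\psi$ (making the left side a sum of three double zeros) and then determine $b_1,a_1$ so that the three double zeros of the right side, contributed by $\widetilde A^2$ and by $(\widetilde A+a_1 f^2)^2$, coincide with them. The constant is fixed by evaluating $(\ast)$ at $z=0$, where both sides equal $1$.

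The converse and the level bound are Problem \ref{prob1}(1), proved in \cite{B19}: \eqref{F5!} is the $N=5$ case of \eqref{GenEq} (with $n=2$, $m=1$), so its exponential is elliptic of level dividing $5$. The scaling makes this explicit—for a level-$k$ function the two sides of $(\ast)$ scale by $\varepsilon_k^{3}$ and $\varepsilon_k^{8}$ with $\varepsilon_k=\exp(-2\pi I/k)$, and equality forces $\varepsilon_k^{5}=1$, i.e.\ $k\mid 5$. Strictness then amounts to deleting the degenerate sub-families inside \eqref{F5!} on which $f$ collapses to level $2,3$ or $4$; these are exactly the loci where \eqref{F2!}, \eqref{F3!} or \eqref{F4!} holds, so removing them leaves the non-degenerate strict-level-$5$ family. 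Universality—that every strict-level-$5$ Buchstaber specialization factors through this one—is Problem \ref{prob1}(2) from \cite{B19}; concretely the parameters $(a_1,b_1,b_2)$ together with the formal-group coefficients exhaust the moduli of the two-parameter family of level-$5$ lattices.

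The main obstacle is the zero-matching step in $(\ast)$. By a parameter count the left side carries two constants $(\beta_1,\beta_2)$ and the right side one $(a_1)$, whereas forcing the two degree-$6$ sections to agree imposes five conditions after the normalization at $z=0$; the system is over-determined, and its solvability is not formal but is exactly the arithmetic input that $\rho=\omega/5$ is a $5$-torsion point (equivalently, the vanishing of the $5$-division polynomial of the curve). Producing $(a_1,b_1,b_2)$ explicitly and proving the six zeros coincide is the technical heart, and the reformulation $(\ast)$ is designed to isolate it, reducing an a priori infinite system on the Taylor coefficients of $A,B$ to a finite computation of divisors on the elliptic curve.
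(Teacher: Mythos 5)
The paper offers no proof of this statement: Theorem~\ref{T5} is imported verbatim as Theorem~7.14 of \cite{B19}, so there is nothing internal to compare against and your attempt has to stand on its own. Your preliminary reductions are correct and well chosen: the identities $f'=B(f)+A_1f$ and $A(f)=(f')^2-\tfrac12 ff''-\tfrac{A_1}{2}ff'-B_2f^2$ do follow from expanding $F(f(x),f(y))$ to first and second order in $y$ (I checked both), the substitution $u=f(z)$ is an isomorphism of formal power series rings so \eqref{F5!} is indeed equivalent to your identity $(\ast)$, and the automorphy count $\varepsilon^3$ versus $\varepsilon^8$ and the order-$6$ pole bookkeeping at $\rho$ are consistent.

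The problem is that the proof stops exactly where the theorem begins. The forward implication requires exhibiting $(a_1,b_1,b_2)$ for which the two degree-$6$ sections in $(\ast)$ coincide; you describe a strategy (choose $b_2$ at a critical value of $\psi$, then match three double zeros) but do not carry it out, and you yourself flag the over-determinacy ($3$ parameters against $5$ conditions) and name its resolution --- the $5$-division relation for $\rho=\omega/5$ --- as ``the technical heart'' without supplying it. The auxiliary claim that $\widetilde A$ has only a simple pole at $\rho$ is likewise asserted (``one checks'') rather than checked; it amounts to two nontrivial identities on the Laurent coefficients of $f$ at $\rho$, namely $d=\alpha c$ and $3e=(\alpha^2-B_2)c$ in the expansion $f=c(z-\rho)^{-1}+d+e(z-\rho)+\dots$. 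Finally, the converse (level divides $5$) and the universality statement are delegated to Problem~\ref{prob1}(1)--(2) ``proved in \cite{B19}'' --- but \cite{B19} is precisely the source of the theorem being proved, so this is circular as a proof; moreover your scaling argument for the converse presupposes that the exponential is already a nondegenerate elliptic function of some level $k$, which for a general Krichever function \eqref{fKr} (arbitrary $\alpha,\rho$) is false and is itself the thing to be established. As it stands this is a plausible and well-organized proof outline, not a proof.
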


Consider the formal group over the ring $R$ of the form
\begin{equation} \label{f5}
 F(u,v)=\frac{u^2 A(v) -v^2 A(u)}{u B(v)-v B(u)},  
\end{equation}
where $A(u) = 1 + \sum_k A_k u^k$, $B(u) = 1 + \sum_k B_k u^k$, $B_1 = A_2 = 0$ and the relation \eqref{F5!} holds, where $A_k \in R$, $B_k \in R$, $a_1 \in R, b_1 \in R, b_2 \in R$.

Set $R_5 = \mathbb{Z}[a_1, b_1, b_2, A_1, A_3, A_4, \ldots, B_2, B_3, B_4, \ldots]/\{I_{ass}, I_5\}$, where $I_{ass}$ is the associativity ideal generated by the relation \eqref{ass} for the formal group \eqref{f5} and $I_{5}$ is the ideal generated by the relation \eqref{F5!}. The formal group \eqref{f5} over the ring $R_5$ is by condition the universal formal group of this form. 
 The formal group \eqref{f5}
is~not generating over $R_5$.

\vfill
\eject

\section{Rings of coefficients for universal formal groups that correspond to~elliptic~genus~of~level~$6$} \label{e6}

\begin{thm}[Theorem 7.18 in {\cite{B19}}] \label{T63}
The elliptic function of level $6$ is the exponential of the universal formal group,
which is a specialization of Buchstaber formal group \eqref{FB},
determined by the relations
\begin{align} 
& 5 A_1^5 - 20 A_1^3 B_2 + 15 A_1 B_2^2 + 24 A_1^2 A_3 - 24 B_2 A_3 - 18 A_1 B_4 = 0, \nonumber \\
& 5 A_1^6 - 21 A_1^4 B_2 + 18 A_1^2 B_2^2 + 48 A_1^3 A_3 - 36 A_1 B_2 A_3 + 18 A_3^2 = 0, \label{F6z} \\
& 3 A_3 (11 A_1^4 - 56 A_1^2 B_2 - 18 A_1 A_3 + 33 B_2^2 + 18 B_4) = \nonumber \\
& \qquad \qquad \qquad \qquad = - A_1 (5 A_1^2 - 6 B_2) (A_1^2 - 3 B_2) (A_1^2 - 4 B_2), \nonumber\\
& 6 A_1^2 (A_1^2 - B_2) (7 A_1^2 - 4 B_2)^2 = (17 A_1^4 - 18 A_1^2 B_2 + 6 A_1 A_3 + 3 B_2^2 + 6 B_4)^2. \nonumber
\end{align} 
\end{thm}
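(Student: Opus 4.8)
The plan is to derive \eqref{F6z} as the elimination ideal attached to the level-$6$ case of the master relation \eqref{GenEq}. For $N=6$ one has $n = \left[\frac{5}{2}\right] = 2$ and $m = \left[\frac{4}{2}\right] = 2$, so \eqref{GenEq} specializes to
\begin{equation} \label{eqStar6}
\left(B(u) + b_1 u\right)^2 \left(B(u) + b_2 u\right)^2 = A(u)^2 \left(A(u) + a_1 u^2\right)^2 \left(A(u) + a_2 u^2\right).
\end{equation}
By the solution of Problem \ref{prob1} in \cite{B19} (part~1: every Buchstaber group \eqref{FB} whose series satisfy \eqref{eqStar6} for some $a_1, a_2, b_1, b_2$ has exponential an elliptic function of level at most $6$; part~2: the universal level-$6$ group lies in this family), it suffices to prove that the condition ``there exist $a_1, a_2, b_1, b_2$ satisfying \eqref{eqStar6}'' defines in $R_B$ exactly the ideal $J_6 = \langle \eqref{F6z} \rangle$. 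First I would expand both sides of \eqref{eqStar6} in powers of $u$, using $A(u) = 1 + A_1 u + A_3 u^3 + \cdots$ and $B(u) = 1 + B_2 u^2 + B_3 u^3 + \cdots$ (recall $A_2 = B_1 = 0$), and record the coefficient identities $E_k \colon [u^k](\text{LHS} - \text{RHS}) = 0$; each $E_k$ is homogeneous of weight $-2k$.

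The auxiliary parameters carry weights $\wt b_i = -2$ and $\wt a_i = -4$; since both factors on the left of \eqref{eqStar6} are squared, $b_1, b_2$ enter only through $s = b_1 + b_2$ and $p = b_1 b_2$, whereas $a_1, a_2$ enter asymmetrically. The identity $E_1$ gives $2s = 5 A_1$, and $E_2, E_3, E_4$ — after using the associativity relations $I_{ass}$ to eliminate the redundant coefficients ($B_3$ in terms of $A_3$, $A_4$ in terms of $B_4$, and so on, as in Theorem \ref{tBU}) — determine $p$, $a_1$ and $a_2$ as explicit functions of the free generators $A_1, B_2, A_3, B_4$. Separating $a_1$ from $a_2$ costs the solution of one quadratic (equivalently a resultant), reflecting the asymmetric exponents on the right of \eqref{eqStar6}. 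Substituting the resulting expressions for $s, p, a_1, a_2$ back into $E_5, E_6, E_7, E_8$ and reducing modulo $I_{ass}$ should yield four homogeneous relations in $A_1, A_3, B_2, B_4$ of weights $-10, -12, -14, -16$, which I would then match with the four equations of \eqref{F6z}. This continues the pattern of Theorems \ref{T43} and \ref{T53}, whose analogous eliminations produce relations in weights $(-6,-8)$ for $N=4$ and $(-8,-12)$ for $N=5$.

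The hard part will be twofold. First, the elimination itself: removing the asymmetric pair $a_1, a_2$ forces a resultant rather than a direct back-substitution, and because the top relation sits in weight $-16$ one must expand \eqref{eqStar6} through $u^8$ and carry the $I_{ass}$-reductions exactly — this I expect to be the main computational burden. Second, and more delicate, is completeness: one must show that \eqref{F6z} \emph{generates} the whole elimination ideal $J_6$ rather than merely lying in it, i.e.\ that every identity $E_k$ with $k > 8$ is a consequence of \eqref{F6z} modulo $I_{ass}$. I would prove this by a graded Gröbner-basis argument over $\mathbb{Q}$, using the known structure of $R_B$ (Theorem \ref{tBU}) to bound the weights in which new generators of $J_6$ could appear, and then treat the single torsion prime $2$ separately over $\mathbb{Z}$ (by analogy with the level-$4$ analysis, where the $2$-torsion is governed by the ideal $I_B$ and the element $A_1^3 - 2 A_1 B_2 + B_3$). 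Along the way I would identify the free generators of $R_B/J_6$ and hence the Krull dimension, checking whether relations~$(3)$ and~$(4)$ of \eqref{F6z} are forced as genuinely new constraints or are scheme-theoretic refinements of the cut made by relations~$(1)$ and~$(2)$.

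Finally, since $2 \mid 6$ and $3 \mid 6$, the variety defined by \eqref{F6z} is the closure of the level-$6$ stratum and contains the level-$2$ and level-$3$ loci as degenerations (compare the strict/non-strict distinction in Theorems \ref{T5} and \ref{T53}); I would record this explicitly. As an independent check that no relation has been lost or spuriously introduced, I would substitute into \eqref{F6z} the Weierstrass data of the Krichever function \eqref{fKr} at a $6$-torsion point $\rho = \omega/6$ supplied by Corollary \ref{CorKri}, and verify the four identities using the division values of $\wp$, $\zeta$, $\sigma$ at sixth-order torsion.
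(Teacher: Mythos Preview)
The paper does not supply its own proof of this theorem: it is quoted verbatim as Theorem~7.18 of \cite{B19}, and the present text only records the statement and the immediate corollary about $R_B/J_6$. So there is no in-paper argument to compare against. What the paper \emph{does} provide is the framework you are already using: Problem~\ref{prob1} together with the remark that it is solved in \cite{B19} for $N\le 6$, and Theorem~\ref{T6}, which states the existential form \eqref{F6!}. Your plan---expand \eqref{F6!}, solve the low-degree coefficient identities for the auxiliary parameters, and read off the elimination ideal in the free generators $A_1,B_2,A_3,B_4$ of $R_B$---is exactly the route the surrounding theorems (\ref{T23}, \ref{T33}, \ref{T43}, \ref{T53}) make one expect, and your weight count $(-10,-12,-14,-16)$ matches the four displayed relations of \eqref{F6z}.

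Two points deserve care. First, your use of $I_{ass}$ to ``eliminate $B_3$ in terms of $A_3$'' etc.\ is slightly loose: what Theorem~\ref{tBU} gives is that $R_B$ modulo decomposables is generated in degrees $1,2,3,4$ plus the torsion classes, so the reduction you want is modulo $I^2$ and the torsion ideal, not a literal expression of $B_3$ as a polynomial in $A_3$. In practice this is harmless over $\mathbb{Q}$ but you should state it precisely. Second, already at $E_1$ you obtain $2s=5A_1$, so the back-substitution introduces a denominator of $2$; the resultant step for the asymmetric pair $(a_1,a_2)$ will likewise introduce denominators. Since the target relations \eqref{F6z} have integer coefficients, you will need to clear these carefully and check that the resulting integral ideal coincides with $J_6$ over~$\mathbb{Z}$, not merely over~$\mathbb{Q}$. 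You flag this under ``treat the prime $2$ separately,'' which is the right instinct, but the prime $3$ also appears in \eqref{F6z} (e.g.\ the factor $18$ in the second and fourth relations) and should be watched as well.
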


\begin{cor}
 The ring of coefficients of the formal group from Theorem \ref{T63} is~$R_B / J_6$, where $J_6$ is the ideal generated by the relations \eqref{F6z}. The formal group is generating over this ring.
\end{cor}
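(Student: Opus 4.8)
The plan is to mirror the argument that yields the analogous corollaries for levels $2,3,4,5$ (the corollaries following Theorems~\ref{T23}, \ref{T33}, \ref{T43}, \ref{T53}); the only change is the explicit system of relations~\eqref{F6z}. Theorem~\ref{T63} already tells us that the formal group whose exponential is the level-$6$ elliptic function is the specialization of the Buchstaber formal group~\eqref{FB} cut out by~\eqref{F6z}. So the corollary is a formal consequence of Theorem~\ref{T63} together with the generating property recorded in Lemma~\ref{lr}; no new power-series computation is needed here, and the genuine content sits upstream in Theorem~\ref{T63} itself.

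First I would fix $J_6$ to be the ideal of $R_B = \mathbb{Z}[A_1, A_3, A_4, \ldots, B_2, B_3, B_4, \ldots]/I_{ass}$ generated by the images of the four relations~\eqref{F6z}, regarded as polynomials in the generators $A_k, B_k$. The point to stress is that $J_6$ is taken inside $R_B$, i.e.\ modulo $I_{ass}$, so that~\eqref{F6z} is imposed on top of associativity rather than alongside it. The specialization of the Buchstaber formal group by~\eqref{F6z} is then, by definition, the formal group obtained from~\eqref{FB} under the projection $\pi\colon R_B \to R_B/J_6$; this is the formal group of Theorem~\ref{T63}, and it is defined over $R_B/J_6$.

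Second I would verify the generating property. By Lemma~\ref{lr} the Buchstaber formal group is generating over $R_B$, so its coefficients $a_{i,j}$ multiplicatively generate $R_B$. Since $\pi$ is a surjective ring homomorphism, the images $\pi(a_{i,j})$ --- which are exactly the coefficients of the specialized formal group --- generate $\pi(R_B) = R_B/J_6$. Hence the subring multiplicatively generated by these coefficients is all of $R_B/J_6$, which both identifies $R_B/J_6$ as the ring of coefficients and shows that the formal group is generating over it.

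I do not expect a serious obstacle in this formal part, as both steps reduce to the surjectivity of $\pi$ and the generating property over $R_B$. The only points demanding care are that the four relations~\eqref{F6z} be imposed modulo $I_{ass}$ (so that no associativity relation is double-counted) and that they are mutually consistent, i.e.\ that $J_6$ is a proper ideal; the latter is guaranteed by the existence of the level-$6$ elliptic function, which furnishes a concrete specialization in which~\eqref{F6z} holds. For completeness I would also remark that, exactly as in the lower-level cases, the universal property of $R_B/J_6$ follows by factoring through $\pi$ the classifying homomorphism of any Buchstaber formal group on whose coefficients~\eqref{F6z} holds.
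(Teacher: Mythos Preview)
Your proposal is correct and matches the paper's approach: the corollary is stated without proof in the paper, as are the analogous corollaries for levels $2,3,4,5$, because it is an immediate consequence of Theorem~\ref{T63} and Lemma~\ref{lr} via the surjection $R_B\to R_B/J_6$. Your write-up makes this implicit reasoning explicit and is exactly what the paper intends.
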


\begin{thm}[Theorem 7.17 in {\cite{B19}}] \label{T6}
The elliptic function of strict level $6$ is the exponential of the universal formal group,
which is a specialization of Buchstaber formal group \eqref{FB},
determined by the condition that there exist parameters $(a_1, a_2, b_1, b_2)$, such that
\begin{equation} \label{F6!}
(B(u) + b_1 u)^2 (B(u) + b_2 u)^2  = A(u)^2 (A(u) + a_1 u^2)^2 (A(u) + a_2 u^2),
\end{equation} 
but the relations \eqref{F2!}, \eqref{F3!}, \eqref{F4!}, or \eqref{F5!}
do not hold.
\end{thm}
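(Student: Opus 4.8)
The plan is to follow the uniform strategy for Problem~\ref{prob1} specialized to $N=6$, reducing the identity~\eqref{F6!} to a divisor computation on a $6$-torsion elliptic curve. By Corollary~\ref{CorKri} the strict level-$6$ elliptic function is the Krichever function~\eqref{fKr} with $\rho=\omega/6$ an exact $6$-torsion point and $\alpha=\zeta(\omega/6)-2\zeta(\omega/2)/6$; by Buchstaber's theorem (Section~\ref{s5}) it is the exponential of a Buchstaber formal group~\eqref{FB}, so the series $A(u),B(u)$ exist. The first step is to set up the dictionary between these series and $f$. Writing $u=f(z)$ and differentiating the linearization $F(f(x),f(y))=f(x+y)$ once and twice in $y$ at $y=0$, one extracts from~\eqref{FB}
\begin{align*}
B(f(z))&=f'(z)-A_1 f(z), &
A(f(z))&=f'(z)^2-\tfrac12 f(z)f''(z)-\tfrac{A_1}{2}f(z)f'(z)-B_2 f(z)^2,
\end{align*}
together with the normalization $A_1=2\alpha$. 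With $\beta_i=b_i-A_1$ these formulas turn each factor of~\eqref{F6!} into a meromorphic function of $z$: $B(f)+b_i f=f'+\beta_i f$, while $A(f)+a_j f^2$ is a fixed quadratic expression in $f,f',f''$.

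The structural backbone is a multiplier count. By Lemma~\ref{llN} one has $f(z+\omega')=\varepsilon f(z)$ with $\varepsilon=\exp(-2\pi I/6)$, so under $z\mapsto z+\omega'$ every factor $B(f)+b_i f$ is multiplied by $\varepsilon$ and every factor $A(f)+a_j f^2$ by $\varepsilon^2$. Hence the two sides of~\eqref{F6!} acquire the multipliers $\varepsilon^{4}$ and $\varepsilon^{10}$, which coincide precisely because $\varepsilon^{6}=1$; under $z\mapsto z+\omega$ both sides are invariant. Consequently the ratio of the two sides of~\eqref{F6!}, read as a function of $z$, is elliptic for the lattice $l$, and it suffices to show this ratio has empty divisor and hence equals its value $1$ at $z=0$ (where both sides reduce to $1$).

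The main step, and the main obstacle, is the divisor matching that determines $a_1,a_2,b_1,b_2$. I would work with the elliptic functions $L=f'/f=\zeta(z)+\zeta(\rho-z)-\zeta(\rho)+\alpha$ and $M=A(f)/f^2+B_2=\tfrac12 L^2-\tfrac12 L'-\tfrac{A_1}{2}L$, whose only poles sit at $0$ and $\rho$; a short computation using $\alpha=A_1/2$ shows $M$ is regular at $\rho$, so both sides of~\eqref{F6!} acquire an order-$8$ pole at $\rho$ once $a_1$ is chosen so that $A(f)+a_1 f^2$ gains a zero at $\rho$ (dropping its pole order from $2$ to $1$). The eight zeros must then be forced to coincide. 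The parity of the exponents in~\eqref{F6!} forces the single factor $A(f)+a_2 f^2$ to carry an even zero divisor; since the zeros of $M+(a_2-B_2)$ sum to $0$, this can happen only at a $2$-torsion point, namely $3\rho=\omega/2$, where a double zero then arises automatically. Spending the remaining freedom in $b_1,b_2$, one checks that all zeros land on the nontrivial torsion points $2\rho,3\rho,4\rho,5\rho$, and that the required coincidences reduce to $\zeta$-function identities at these points — for instance $L(2\rho)=L(5\rho)$ and $L(3\rho)=L(4\rho)$ — each following from $6\rho=\omega\in l$, the oddness of $\zeta$, and the quasi-period relation $2\zeta(\omega/2)=\eta_\omega$. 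Verifying that exactly these identities hold, and only under the exact-order-$6$ hypothesis on $\rho$, is the crux; it is here that the computation separates level $6$ from the lower levels.

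Finally, for universality and strictness I would argue as in~\cite{B19}. Part~(1) of Problem~\ref{prob1} guarantees that any solution of~\eqref{F6!} (which is~\eqref{GenEq} for $N=6$) has exponential an elliptic function of level at most $6$, so the formal group of the form~\eqref{FB} subject to~\eqref{F6!} over the corresponding universal ring already has the required exponential; universality and the generating property then follow from the construction of the universal formal group of a given form in Section~\ref{s3} together with Lemma~\ref{lr}. The excluded relations~\eqref{F2!}--\eqref{F5!} cut out precisely the sub-loci on which $\rho$ degenerates to order dividing the corresponding $N$, equivalently on which~\eqref{F6!} collapses to a lower relation; removing them isolates strict level $6$.
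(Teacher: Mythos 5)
There is nothing in this paper to compare your argument against: Theorem~\ref{T6} is stated as an imported result, quoted verbatim from Theorem~7.17 of \cite{B19}, and no proof is given here. So the only meaningful comparison is with the method of the cited source, which (like the companion results in \cite{Ell4}, \cite{Ust17}, \cite{BBU}) does proceed in the spirit you describe: the dictionary $B(f)=f'-A_1f$, the expression of $A(f)$ through $f,f',f''$, and multiplier/divisor bookkeeping on the $6$-torsion curve. The pieces of your sketch that can be checked are correct: the normalization $A_1=2\alpha$, the multiplier count $\varepsilon^4=\varepsilon^{10}$ for $\varepsilon^6=1$, the regularity of $A(f)/f^2$ at $\rho$ (which indeed hinges on $\alpha=A_1/2$), the parity argument forcing the unsquared factor $A(f)+a_2f^2$ to vanish at a $2$-torsion point, and the sample identity $L(2\rho)=L(5\rho)$ via $6\rho=\omega$ and the quasi-period relation.

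That said, two genuine gaps remain. First, the step you yourself flag as the crux --- actually exhibiting $(a_1,a_2,b_1,b_2)$ and verifying that all eight zeros of the two sides of \eqref{F6!} coincide on the division points $2\rho,\dots,5\rho$ --- is only described, not carried out; without it you have shown that the two sides are elliptic functions with the same pole at $\rho$ and the same value at $0$, but not that their zero divisors agree, which is the entire content of the existence direction. Second, the universality and strictness claims are discharged by appealing to part~(1) of Problem~\ref{prob1} for $N=6$; but the solution of that problem for $N=6$ is precisely the theorem of \cite{B19} being cited, so as written your argument for the converse direction (that \eqref{F6!} with the loci \eqref{F2!}--\eqref{F5!} removed forces the exponential to be the strict level-$6$ function, and that the resulting formal group is universal for this property) is circular rather than independent. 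To close this you would need to run the divisor/multiplier analysis in reverse: starting from an arbitrary exponential satisfying \eqref{F6!}, derive the functional equation \eqref{Hfe} or the periodicity conditions of Lemma~\ref{llN} characterizing level at most $6$, and then show that each lower level is equivalent to one of the excluded relations.
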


Consider the formal group over the ring $R$ of the form
\begin{equation} \label{f6}
 F(u,v)=\frac{u^2 A(v) -v^2 A(u)}{u B(v)-v B(u)},  
\end{equation}
where $A(u) = 1 + \sum_k A_k u^k$, $B(u) = 1 + \sum_k B_k u^k$, $B_1 = A_2 = 0$ and the relation \eqref{F6!} holds, where $A_k \in R$, $B_k \in R$, $a_1 \in R, a_2 \in R, b_1 \in R, b_2 \in R$.

Set $R_6 = \mathbb{Z}[a_1, a_2, b_1, b_2, A_1, A_3, A_4, \ldots, B_2, B_3, B_4, \ldots]/\{I_{ass}, I_6\}$, where $I_{ass}$ is the associativity ideal generated by the relation \eqref{ass} for the formal group \eqref{f6} and $I_{6}$ is the ideal generated by the relation \eqref{F6!}. The formal group \eqref{f6} over the ring $R_6$ is by condition the universal formal group of this form. 
 The formal group \eqref{f6}
is~not generating over $R_6$.

\vfill

\eject

\end{document}